\newtheorem{theorem}{Theorem}[section]
\newtheorem{lemma}[theorem]{Lemma}
\newtheorem{definition}[theorem]{Definition}
\newtheorem{proposition}[theorem]{Proposition}
\newtheorem{remark}[theorem]{Remark}
\newtheorem{corollary}[theorem]{Corollary}
\newtheorem{conjecture}[theorem]{Conjecture}
\newtheorem{problem}[theorem]{Problem}
\numberwithin{equation}{section}
\numberwithin{figure}{section}
\begin{document}

\title{Search for torsion in Khovanov homology}

\author{Sujoy Mukherjee}

\author{J\'{o}zef H. Przytycki}

\author{Marithania Silvero}

\author{Xiao Wang}

\author{Seung Yeop Yang}

\begin{abstract}
	
	In the Khovanov homology of links, presence of $\mathbb{Z}_2$-torsion is a very common phenomenon. Finite number of examples of knots with $\mathbb{Z}_n$-torsion for $n>2$ were also known, none for $n>8$. In this paper, we prove that there are infinite families of links whose Khovanov homology contains $\mathbb{Z}_n$-torsion for $2 < n < 9$ and $\mathbb{Z}_{2^s}$-torsion for $s < 24$. We also introduce $4$-braid links with $\mathbb{Z}_3$-torsion which are counterexamples to the PS braid conjecture. We also provide an infinite family of knots with $\mathbb{Z}_5$-torsion in reduced Khovanov homology and $\mathbb{Z}_3$-torsion in odd Khovanov homology.

\end{abstract}

\keywords{Khovanov homology, odd Khovanov homology, reduced Khovanov homology, braids, torus links, torsion, smoothing number one.}

\subjclass[2010]{Primary: 57M25. Secondary: 57M27.}

\maketitle

\tableofcontents

\section{Introduction}

Khovanov homology  \cite{Kho1}, a categorification of the Jones polynomial \cite{Jon}, has been computed for many links and the experimental data suggests that there is an abundance of $\mathbb{Z}_2$-torsion.\footnote{Reasons for abundance of $\mathbb{Z}_2$ were discussed in \cite{AP,PPS,Prz2,PrSa,Shu3}.} However, other torsion groups seem to appear rarely \cite{Kho2,Shu3}. The smallest known knot with $\mathbb{Z}_4$-torsion was the torus knot $T(4,5)$ with crossing number $15$\footnote{K. Murasugi proved that the crossing number of the torus link $T(m,n)$, $m \leq n$ is equal to $(m-1)n$ \cite{Mur}.}, while the smallest known knot with $\mathbb{Z}_3$ and $\mathbb{Z}_5$-torsion was the torus knot $T(5,6)$ with crossing number $24$. Despite not having been proven yet, calculations by D. Bar-Natan, A. Shumakovitch, and L. Lewark suggest $\mathbb{Z}_{p^k}$-torsion in the torus knot $T(p^k,p^k+1)$ for $p^k >3$ \cite{PrSa}. In this paper, we show the existence of $\mathbb{Z}_n$-torsion for $2 < n < 9$ and $2^s$-torsion for $s \leq 23$ in the Khovanov homology of some infinite family of knots and links of two components with braid index $4$. Moreover, we found a link of two components with braid index $4$ and $\mathbb{Z}_3$-torsion in Khovanov homology.

The paper is organized as follows. In Section \ref{2}, we review the main ideas of Khovanov homology and standardize our notations. The main families of links we analyze are twist deformations of torus links.\footnote{Khovanov homology of torus links was studied in \cite{BFLZ,GOR,IW,Roz,Sto1,Sto2,Tur,Wil}.} In particular, we discuss the long exact sequence of Khovanov homology \cite{AP,Kho1,Sto2,Vir1,Vir2}. Section \ref{aa} is devoted to our main results. In Subsection \ref{3.1}, we analyze the family of torus links $T(m,m+2)$ and their deformations by twists. In particular, we show that these families have infinite sub-families with $\mathbb{Z}_3, \mathbb{Z}_4, \mathbb{Z}_5, \mathbb{Z}_7,$ and $\mathbb{Z}_8$-torsion in Khovanov homology. We explicitly use the fact that the links in these families have smoothing number one. In the next subsection, we study Khovanov homology of the twist deformations of the torus knots of type $T(m,m+1)$. In Subsection \ref{ss 3.3},  we discuss examples of knots and links with $\mathbb{Z}_3, \mathbb{Z}_4, \mathbb{Z}_5, \mathbb{Z}_7,$ and $\mathbb{Z}_8$-torsion. In many cases, we give examples smaller than known ones. In particular, we give an example of a knot diagram of $22$ crossings with $\mathbb{Z}_3$-torsion in Khovanov homology. In Subsection \ref{3.2}, we give examples of infinite families of knots with braid index $4$ having $\mathbb{Z}_{2^{s}}$-torsion for $2 \leq s \leq 23$.\footnote{Due to our computational limitations we could only compute up to $s=23.$ See Corollary \ref{Corollary 3.14} and Conjecture \ref{bigtorsions}.} These knots are counterexamples to parts of a conjecture in \cite{PrSa}.

In the penultimate section, we discuss an infinite family of knots with $\mathbb{Z}_5$-torsion in reduced Khovanov homology. Torsion in reduced Khovanov homology is rarer than torsion in unreduced Khovanov homology. Our smallest example with $\mathbb{Z}_5$-torsion is $T^{(-8)}(7,9)$ having a knot diagram with $46$ crossings. The previously known smallest example with $\mathbb{Z}_5$-torsion was $T(7,8)$ with $48$ crossings. We also give examples of infinite families of knots with $\mathbb{Z}_3$-torsion in odd Khovanov homology. Finally, the Appendix consists of Khovanov homology tables of some links relevant to this paper.

This paper is the third in the series of papers resulting from Mathathon, a mathematical marathon run at the George Washington University every December \cite{CMPWY1,CMPWY2}.\footnote{The first Mathathon took place between December 14 and December 22, 2015.}

\section{Preliminaries}\label{2}

In this section we review the main ideas of Khovanov homology of unoriented framed links following O. Viro  \cite{Vir1}.

Let $D$ be an unoriented diagram of a link, and $cr(D)$ the set of its crossings. A Kauffman state $s$ assigns a label, $A$ or $B$, to each crossing of $D$ (i.e., $s: \, cr(D) \to \{A,B\}$). Let $\mathcal{S}$ be the collection of $2^{|cr(D)|}$ states of $D$. For $s \in \mathcal{S}$, write $\sigma = \sigma(s) = |s^{-1}(A)| - |s^{-1}(B)|$.
We denote by $sD$ the result of smoothing each crossing according to the convention in Figure \ref{markers}. Let $|sD|$ be the number of circles in $sD$. Enhance a state $s$ with a map $e$ which associates a sign $\epsilon_i = \pm 1$ to each of the $|sD|$ circles in $sD$. We keep the letter $s$ for the enhanced state $(s,e)$ to avoid cumbersome notation. Write $\tau = \tau(s) = \sum_{i=1}^{|sD|} \epsilon_i$, and define, for the enhanced state $s$, the integers
$$
a = a(s) = \sigma, \quad b = b(s) = \sigma + 2\tau.
$$

\begin{figure}
\centering
\includegraphics[width = 14cm]{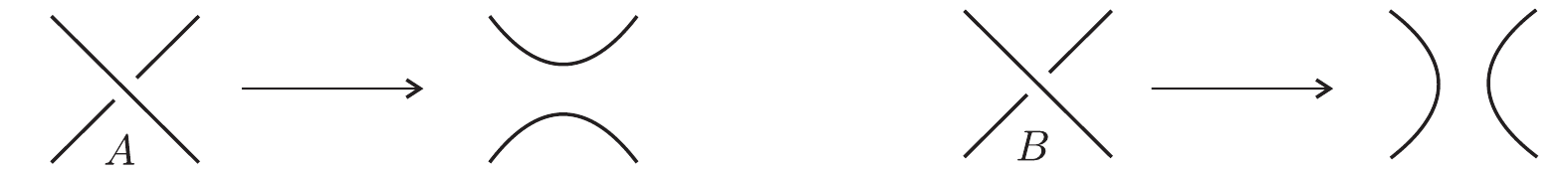}
\caption{\small{Smoothing of a crossing according to its $A$ or $B$-label.}}
\label{markers}
\end{figure}

Given $s$ and $s'$, two enhanced states of $D$, $s'$ is said to be adjacent to $s$ if $a(s') = a(s) - 2$, $b(s') = b(s)$, both of them have identical labels except for one changed crossing, where $s$ assigns an $A$-label and $s'$ a $B$-label, and they assign the same sign to the common circles in $sD$ and $s'D$.

Let $C_{a,b}(D)$ be the free abelian group generated by the set of enhanced states $s$ of $D$ with $a(s) = a$ and $b(s) = b$. Order the crossings in $D$. Now, fix an integer $b$ and consider the descendant complex
$$
\cdots \quad \longrightarrow \quad C_{a,b}(D) \quad \stackrel{\partial_{a,b}}{\longrightarrow} \quad C_{a-2,b}(D) \quad \longrightarrow \quad \cdots
$$
with differential $\partial_{a,b}(s) = \sum\limits_{s' \in \mathcal{S}} (s:s') s'$, with $(s:s') = 0$ if $s'$ is not adjacent to $s$ and otherwise $(s:s') = (-1)^t$, with $t$ the number of $B$-labeled crossings in $s$ coming after the changed crossing.

\begin{theorem} {\rm{\cite{Vir1,Vir2}}}
With the notation above, $\partial_{a-2,b} \circ \partial_{a,b} = 0$ and the corresponding homology groups
$$
H_{a,b}(D)=\frac{\textnormal{ker} (\partial_{a,b})}{\textnormal{im}(\partial_{a+2,b})}
$$
are invariant under Reidemeister II and III moves, and therefore they are invariants of framed unoriented links. These groups categorify the unreduced Kauffman bracket polynomial.
\end{theorem}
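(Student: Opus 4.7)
The plan is to verify three claims in turn: the relation $\partial_{a-2,b}\circ\partial_{a,b}=0$, invariance of the resulting homology under Reidemeister II and III moves, and the categorification of $\langle D\rangle$.

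For $\partial^{2}=0$, I would fix an enhanced state $s$ and show that every term in the double sum cancels in pairs. A nonzero contribution to $\partial^{2}(s)$ corresponds to choosing an ordered pair of crossings $(c_{1},c_{2})$ at which $s$ carries $A$-labels and flipping them in sequence, passing through an intermediate enhanced state $s''$ to a final enhanced state $s'$. The same $s'$ is also obtained by flipping $c_{2}$ first and $c_{1}$ second, through a different intermediate $\tilde{s}''$. The two contributions carry signs $(-1)^{t_{1}+t_{2}}$ and $(-1)^{t_{1}'+t_{2}'}$, where each exponent counts $B$-labels occurring after the changed crossing at that step; swapping the order of the two flips changes the parity of exactly one exponent, yielding opposite signs. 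The remaining content is the enhancement bookkeeping: one must verify that for every local pattern (the two smoothings may each merge two circles or split one, in any combination) both flip orders induce the same sign assignment on the common circles of $s'D$. This amounts to checking that the local merge/split adjacency rules commute.

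For invariance under Reidemeister II and III, the standard strategy is to exhibit, for each local move, an explicit chain equivalence between the two complexes, bigrading by bigrading. For R2 one labels the two new crossings and considers the four local smoothings $AA, AB, BA, BB$; two of these generate a contractible direct summand of $C_{*,b}(D)$, and the quotient is isomorphic to the complex of the simpler diagram. R3 can be reduced to R2 after an isotopy, or handled directly by comparing the eight local states on both sides and matching enhancements. In all cases one checks that the isomorphism respects the bigrading, which is essentially automatic from how $a(s)$ and $b(s)$ are defined. For the categorification statement, I would compute the graded Euler characteristic of the descendant complex. Since $C_{a,b}(D)$ is freely generated by enhanced states with $a(s)=a$ and $b(s)=b$, summing over the $\pm 1$ enhancements on the $|sD|$ circles of a given Kauffman state $s$ produces, with the appropriate signs in Viro's variable, exactly the factor $(-A^{2}-A^{-2})^{|sD|}$, while the Kauffman labels contribute $A^{\sigma(s)}$. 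The resulting state sum is precisely $\langle D\rangle$.

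The main obstacle is step one: the sign cancellation in $\partial^{2}$ is robust, but checking that the enhancement rules are consistent in every local picture, particularly when the two changed crossings share a common circle in the intermediate smoothing, requires a patient case analysis. Once this is secured, steps two and three are essentially bookkeeping.
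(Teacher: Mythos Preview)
The paper does not supply its own proof of this theorem; it is stated as background with a citation to Viro \cite{Vir1,Vir2} and then used without further argument. So there is no in-paper proof to compare your proposal against.

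That said, your outline is the standard one found in the cited references (and in Khovanov's original treatment): pairwise sign cancellation for $\partial^{2}=0$ via the cube-edge sign rule together with commutativity of the merge/split operations, explicit chain homotopy equivalences for R2 and R3 built from contractible subcomplexes, and the graded Euler characteristic recovering the state-sum expansion of $\langle D\rangle$. Your identification of the delicate point---consistency of the enhancement rules when the two changed crossings interact through a shared circle---is accurate; this is exactly the (co)associativity and Frobenius compatibility check for the algebra $\mathbb{Z}[x]/(x^{2})$, and it does require the case analysis you anticipate. One small caution: R3 does not literally reduce to R2 by an isotopy; the usual shortcut is that once R2 invariance is known, R3 follows by a cone/cancellation argument on the complexes, or else one matches the two cubes directly. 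Apart from that phrasing, your plan is sound and matches what the cited sources do.
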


We will refer to the groups $H_{a,b}(D)$ as the {\it framed Khovanov homology groups} of $D$.

Following \cite{Vir1}, the skein relation associated to the Kauffman bracket can be categorified in the following way:

Let $D_A$ (respectively, $D_B$) be the link diagram obtained after smoothing $D$ at a crossing $v$ of $D$ according to an $A$-label (respectively, $B$-label). Note that we do not keep track of the vertex $v$ in the notation, as it will not be relevant in our settings. Consider the map
$$
\alpha: \, C_{a+1,b+1}(D_B) \, \longrightarrow \, C_{a,b}(D),
$$
which sends an enhanced state $s$ of $D_B$ to the enhanced state of $D$ assigning a $B$-label to the crossing $v$ and keeping for the rest of the crossings the same labels as those assigned by $s$. The signs of the circles are also preserved.

Now consider the map
$$
\beta: \, C_{a,b}(D) \, \longrightarrow \, C_{a-1,b-1}(D_A),
$$
which sends each enhanced state with a $B$-label at the crossing $v$ to 0 and each enhanced state with an $A$-label at $v$ to the enhanced state of $D_A$ assigning the same labels and signs of the circles.

A short exact sequence of complexes can be obtained by combining both homomorphisms:
$$
0 \quad \longrightarrow \quad C_{*+1,*+1}(D_B) \quad \stackrel{\alpha} {\longrightarrow} \quad C_{*,*}(D) \quad \stackrel{\beta} {\longrightarrow} \quad C_{*-1,*-1}(D_A) \quad \longrightarrow \quad 0,
$$
which gives rise to the following long exact sequence of homology,
$$
\begin{array}{ccccccc}
\cdots& \stackrel{\gamma_*} {\longrightarrow} & H_{a+3,b+1}(D_B) & \stackrel{\alpha_*} {\longrightarrow} & H_{a+2,b}(D) &
\stackrel{\beta_*} {\longrightarrow} & H_{a+1,b-1}(D_A) \\
& \stackrel{\gamma_*} {\longrightarrow} & H_{a+1,b+1}(D_B) & \stackrel{\alpha_*} {\longrightarrow} & H_{a,b}(D) &
\stackrel{\beta_*} {\longrightarrow} & H_{a-1,b-1}(D_A) \\
& \stackrel{\gamma_*} {\longrightarrow} & H_{a-1,b+1}(D_B) & \stackrel{\alpha_*} {\longrightarrow} &\cdots.
\end{array}
$$

Let $\vec D$ be the diagram obtained after fixing an orientation for $D$ and let $w = w(\vec D)$ be its writhe. Then, the classical Khovanov (co)homology of $\vec D$ can be obtained from the framed Khovanov homology in the following way
$$H^{i,j}(\vec D) = H_{w-2i,3w-2j}(D),$$
with these groups being link invariants and categorifying the unreduced Jones polynomial. Note that the indices $i$ and $j$ agree with those used in most tabulations (see, for example, the tables at The Knot Atlas \cite{KA}).

In the opposite direction, $$H_{a,b}(D)= H^{\frac{w-a}{2},\frac{3w-b}{2}}(\vec D).$$

Since the use of subscripts/superscripts makes it clear whether we refer to classical Khovanov (co)homology or framed Khovanov homology, from now on we will write $D$ for both, the oriented and unoriented diagrams, unless otherwise stated.

\begin{lemma}\label{Corollary 2.2.}
Let us assume that the smoothed crossing $v$ is positive. If we orient $D_A$ in the only way preserving the orientation of $D$, and choose an orientation of $D_B$ that keeps the orientation of the components not involved in the crossing $v$, then the previous long exact sequence becomes
$$
\begin{array}{ccccccc}
\cdots& \stackrel{\gamma_*} {\longrightarrow} & H^{\frac{w(D_B)-w(D)-3}{2}+i,\frac{3(w(D_B)-w(D))-1}{2}+j}(D_B) & \stackrel{\alpha_*} {\longrightarrow} & H^{i-1,j}(D) &
\stackrel{\beta_*} {\longrightarrow} & H^{i-1,j-1}(D_A) \\
& \stackrel{\gamma_*} {\longrightarrow} & H^{\frac{w(D_B)-w(D)-1}{2}+i,\frac{3(w(D_B)-w(D))-1}{2}+j}(D_B) & \stackrel{\alpha_*} {\longrightarrow} & H^{i,j}(D) &
\stackrel{\beta_*} {\longrightarrow} & H^{i,j-1}(D_A) \\
& \stackrel{\gamma_*} {\longrightarrow} & H^{\frac{w(D_B)-w(D)+1}{2}+i,\frac{3(w(D_B)-w(D))-1}{2}+j}(D_B) & \stackrel{\alpha_*} {\longrightarrow} &\cdots.
\end{array}
$$
\end{lemma}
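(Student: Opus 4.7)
The plan is to translate the framed long exact sequence displayed just before the statement term by term into classical Khovanov cohomology using the dictionary
\[
H_{a,b}(\vec D)\;=\;H^{\frac{w-a}{2},\,\frac{3w-b}{2}}(\vec D),
\]
applied to each of the three diagrams $D$, $D_A$, $D_B$ with its chosen orientation. No new exactness is needed: exactness is inherited verbatim from the framed version, since the chain maps $\alpha,\beta$ (and the connecting $\gamma_*$) are unchanged and only their bigradings get relabelled.

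First I would set $a=w(D)-2i$ and $b=3w(D)-2j$; this substitution turns $H_{a,b}(D)$ into $H^{i,j}(D)$, pinning down the middle column. Next I would use the hypothesis that $v$ is positive: under Kauffman's convention, the $A$-smoothing of a positive crossing is the oriented (Seifert) smoothing, so the orientation of $D_A$ inherited from $D$ is canonical, and passing from $D$ to $D_A$ simply removes a single positive crossing without touching the others. Hence $w(D_A)=w(D)-1$. Plugging this together with the above values of $a$ and $b$ into
\[
H_{a-1,b-1}(D_A)\;=\;H^{\frac{w(D_A)-(a-1)}{2},\,\frac{3w(D_A)-(b-1)}{2}}(D_A)
\]
collapses, after a short arithmetic check, to $H^{i,j-1}(D_A)$, which is the middle-row $D_A$-term appearing in the statement.

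For the $B$-smoothing the orientation is not canonically forced by that of $D$: one must pick an orientation on the newly split component(s) and may have to reverse orientation on a shared component. I would therefore keep $w(D_B)$ as a free integer parameter and just substitute. The identity
\[
H_{a+1,b+1}(D_B)\;=\;H^{\frac{w(D_B)-(a+1)}{2},\,\frac{3w(D_B)-(b+1)}{2}}(D_B)
\]
becomes $H^{\frac{w(D_B)-w(D)-1}{2}+i,\,\frac{3(w(D_B)-w(D))-1}{2}+j}(D_B)$, matching the middle-row $D_B$-term. Repeating the same substitution with $(a,b)$ replaced by $(a+2,b)$ and $(a-2,b)$ produces the outer two rows: the first index shifts by $\pm 1$ in $D$ and $D_A$, while in $D_B$ only the $(w(D_B)-w(D)-1)/2$ numerator shifts by $\mp 1$ (giving $-3/2$ and $+1/2$) and the polynomial index is unchanged, as claimed.

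The step I would treat most carefully is the convention-matching one: that for a \emph{positive} crossing, the $A$-smoothing is the oriented smoothing (so $w(D_A)=w(D)-1$), and that the non-canonicity of the orientation of $D_B$ is handled by carrying $w(D_B)-w(D)$ as a fixed but arbitrary integer rather than attempting to evaluate it. Everything after that is a mechanical chase through the conversion formula.
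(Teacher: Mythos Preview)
Your proposal is correct and is exactly the intended argument: the paper does not supply a proof of this lemma, treating it as an immediate translation of the framed long exact sequence via the dictionary $H_{a,b}(D)=H^{(w-a)/2,(3w-b)/2}(D)$ together with the observation that $w(D_A)=w(D)-1$ when $v$ is positive. Your careful handling of the conventions (that the $A$-smoothing of a positive crossing is the oriented one, and that $w(D_B)$ must be carried as a free parameter) fills in precisely the details the paper leaves implicit.
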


\begin{remark}
	Let $v$ be a positive crossing of a diagram $D'$. Perform a $t_{k}$-move after the crossing $v$. See Figure \ref{tkmove}, where $k$ is positive. However, we also allow $k$ to be non-positive. Consider the long exact sequence in Lemma \ref{Corollary 2.2.} for $D=t_{k}(D^{'})$ and $D_{A}=t_{k-1}(D^{'})$ (see \cite{Prz1}).  Observe that $D_{B},$ up to framing, in the long exact sequence does not depend on $k$.  This observation is used throughout the paper.
\end{remark}

\begin{figure}[h!]
\centering
\includegraphics[width = 14cm]{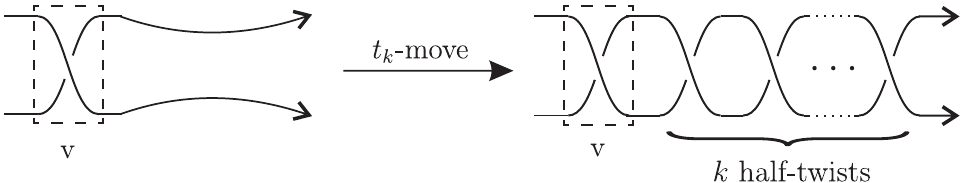}
\caption{\small{A $t_k$-move.}}
\label{tkmove}
\end{figure}

\begin{definition}
The $A$-smoothing number of a diagram $D$ is the minimal number of $A$-smoothings needed in order to transform $D$ into a trivial link (compare \cite{Jab}). In particular, if the $A$-smoothing number of $D$ equals one, then there exists a crossing such that $D_A$ is a trivial link. The $A$-smoothing number of a link $L$ is the minimum over all diagrams of $L$. A similar definition holds for the case of $B$-smoothing.
\end{definition}

\section{Torsion in the Khovanov homology of $T^{(k)}(m,n)$}\label{aa}

Write $T(m,n)$ for a torus link of type $(m,n).$ Any torus link $T(m,n)$ with $m \geq 2$ can be represented as the closed braid on $m$ strands $\widehat{\beta}_{m,n} = (\sigma_{m-1} \sigma_{m-2} \cdots \sigma_2 \sigma_1)^n$, with $\sigma_i$ being the classic generators of the braid group given by Artin \cite{Art}, for $1\leq i \leq m-1$. Given $k \in \mathbb{Z}$, denote by $T^{(k)}(m,n)$ the link represented by the closed braid $\widehat{\beta}_{m,n} = (\sigma_{m-1} \sigma_{m-2} \cdots \sigma_2 \sigma_1)^n\sigma_1^{k}$, that is, the link $T(m,n)$ after performing a $t_k$-move on the first two strands (Figure \ref{tkmove}). Unless otherwise stated, we will keep the notation $T^{(k)}(m,n)$ when referring to the classical diagram representing the link $T^{(k)}(m,n)$ (that is, the one expresed as the previous closed braid). See Figure \ref{exampletorus} for examples and braid conventions.

\begin{figure}[h!]
\centering
\includegraphics[width = 12.5cm]{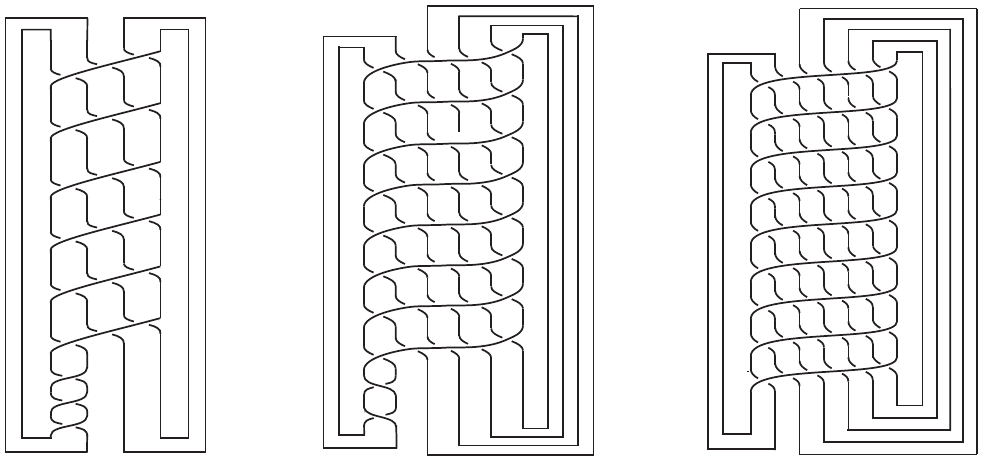}
\caption{\small{The torus links $T^{(3)}(4,6)$, $T^{(-2)}(6,8)$ and $T(7,9)$.}}
\label{exampletorus}
\end{figure}

\begin{theorem}\label{Theorem 4.1} Let $D$ be a link diagram with writhe $w(D) = w$ and $B$-smoothing number equaling one in such a way that $D_B$ is the trivial knot diagram with framing $w_B = w(D_B)$. Let $u = \frac{w-w_B+1}{2}$, then
\
\begin{enumerate}
  \item For any pair of integers $(i,j) \notin \{(u,3u),\, (u,3u-2), \, (u-1,3u), \, (u-1,3u-2)\}$, $$H^{i,j}(D) = H^{i,j-1}(D_A).$$
  \item \emph{(a)} $$
\begin{array}{ccccccccc} && 0 &  {\longrightarrow} & H^{u-1,3u}(D) &
\stackrel{\beta_*} {\longrightarrow} & H^{u-1,3u-1}(D_A) & \stackrel{\gamma_*} {\longrightarrow} \\
& \stackrel{\gamma_*} {\longrightarrow} & \mathbb{Z} & \stackrel{\alpha_*} {\longrightarrow} & H^{u,3u}(D) &
\stackrel{\beta_*} {\longrightarrow} & H^{u,3u-1}(D_A) & {\longrightarrow} &0.
\end{array}
$$
\emph{(b)} $$
\begin{array}{ccccccccc} && 0 &  {\longrightarrow} & H^{u-1,3u-2}(D) &
\stackrel{\beta_*} {\longrightarrow} & H^{u-1,3u-3}(D_A) & \stackrel{\gamma_*} {\longrightarrow} \\
& \stackrel{\gamma_*} {\longrightarrow} & \mathbb{Z} & \stackrel{\alpha_*} {\longrightarrow} & H^{u,3u-2}(D) &
\stackrel{\beta_*} {\longrightarrow} & H^{u,3u-3}(D_A) & {\longrightarrow} &0.
\end{array}
$$
\item  For any pair of integers $(i,j) \notin \{(u,3u), (u,3u-2)\}$,
$$tor H^{i,j}(D) = tor H^{i,j-1}(D_A).$$
\end{enumerate}
\end{theorem}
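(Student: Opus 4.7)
The plan is to apply the long exact sequence of Lemma \ref{Corollary 2.2.} at the single crossing $v$ of $D$ whose $B$-smoothing produces the trivial knot diagram $D_B$ (assuming, as in Lemma \ref{Corollary 2.2.}, that $v$ is positive; a parallel argument handles the negative case). Since $D_B$ is the unknot, its classical Khovanov cohomology is $\mathbb{Z}$ at bidegrees $(0,1)$ and $(0,-1)$ and zero elsewhere. The whole theorem will then follow from tracking when the $D_B$ entries appearing in the long exact sequence can be nonzero.

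First I would rewrite the shifted $D_B$ indices from Lemma \ref{Corollary 2.2.} using $u = \tfrac{w-w_B+1}{2}$, so that $\tfrac{w_B-w-1}{2} = -u$ and $\tfrac{3(w_B-w)-1}{2} = 1-3u$. A direct substitution shows that the three consecutive $D_B$ groups bracketing $H^{i-1,j}(D)$, $H^{i,j}(D)$, and $H^{i+1,j}(D)$ become $H^{i-u-1,\,j+1-3u}(D_B)$, $H^{i-u,\,j+1-3u}(D_B)$, and $H^{i-u+1,\,j+1-3u}(D_B)$. Each of these is nonzero only when its first coordinate is $0$ and its second is $\pm 1$, forcing $j \in \{3u,\,3u-2\}$ and, respectively, $i \in \{u+1,\,u,\,u-1\}$. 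In particular, only the four pairs $\{(u,3u),\,(u,3u-2),\,(u-1,3u),\,(u-1,3u-2)\}$ can be ``exceptional.''

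Part (1) is then immediate: for any $(i,j)$ outside this set, both $D_B$ groups adjacent to $H^{i,j}(D)$ in the long exact sequence vanish, so the connecting arrows on either side of $\beta_*$ are trivial and $\beta_*\colon H^{i,j}(D)\to H^{i,j-1}(D_A)$ is an isomorphism. For part (2a), I would isolate the segment of the long exact sequence running from $H^{u-1,3u}(D)$ to $H^{u,3u-1}(D_A)$; the flanking $D_B$ groups $H^{-1,1}(D_B)$ and $H^{1,1}(D_B)$ both vanish, while $H^{0,1}(D_B)=\mathbb{Z}$ appears in the middle, yielding the stated six-term exact sequence. The argument for (2b) is identical, replacing $H^{0,1}(D_B)=\mathbb{Z}$ by $H^{0,-1}(D_B)=\mathbb{Z}$.

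Finally, part (3) splits into two cases. For $(i,j)$ outside the four-element exceptional set it follows at once from (1). For the two remaining cases $(i,j)=(u-1,3u)$ and $(i,j)=(u-1,3u-2)$, the sequences in (2) exhibit $H^{i,j}(D)$ as the kernel of a homomorphism from $H^{i,j-1}(D_A)$ into $\mathbb{Z}$; since $\mathbb{Z}$ is torsion-free, such a map annihilates all torsion, so the torsion subgroup of the kernel equals that of the source. The main obstacle is purely bookkeeping: the delicate step is the second paragraph, where one must correctly translate the index shifts in Lemma \ref{Corollary 2.2.} and verify that no additional $(i,j)$ slips into the exceptional set; once that is done, the rest is a short diagram chase.
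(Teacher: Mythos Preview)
Your proposal is correct and follows essentially the same approach as the paper: both arguments feed the Khovanov homology of the unknot into the long exact sequence of Lemma~\ref{Corollary 2.2.}, identify the four exceptional bidegrees where a nonzero $D_B$ term can appear, and then read off parts (1)--(3). Your handling of part (3) is phrased slightly differently---you observe directly that torsion lies in the kernel of any map to $\mathbb{Z}$, whereas the paper notes that the image of $\gamma_*$ in $\mathbb{Z}$ is either $0$ or free, so the resulting short exact sequence splits---but these are equivalent one-line arguments.
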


\begin{proof} Khovanov (co)homology of the trivial knot is equal to $\mathbb{Z}$ when $(i,j) = (0, \pm 1)$. Therefore, $$H_{a,b}(D_B)=\left\{
                             \begin{array}{ll}
                               \mathbb{Z} & \hbox{if $(a,b)=(w_B,3w_B \pm 2)$,} \\
                               0 & \hbox{otherwise.}
                             \end{array}
                           \right.$$\\
(1) Since $D_B$ is the trivial knot, $H^{i-u,j-3u+1}(D_B)=H^{i-u+1,j-3u+1}(D_B)=0$, and therefore the associated long exact sequence becomes of the form
$$0 \longrightarrow H^{i,j}(D) \longrightarrow H^{i,j-1}(D_A) \longrightarrow 0.$$ 
(2) The long exact sequences in (a) and (b) follow from combining the Khovanov homology of the trivial knot with Lemma \ref{Corollary 2.2.}. \ \\
\ \\
(3) In addition to (1), we need the equality of torsion for indices $(i,j)=(u-1,3u)$ and $(u-1,3u-2).$ For $(i,j)=(u-1,3u),$ the sequence
$$ 0  \longrightarrow  H^{u-1,3u}(D)
\stackrel{\beta_*} {\longrightarrow}  H^{u-1,3u-1}(D_A)  \stackrel{\gamma_*} {\longrightarrow} \gamma_* (H^{u-1,3u-1}(D_A))  \longrightarrow 0$$
splits because $\gamma_* (H^{u-1,3u-1}(D_A))$ is isomorphic to either $0$ or $\mathbb{Z}$. Therefore, $H^{u-1,3u-1}(D_A)=H^{u-1,3u}(D)$ or $H^{u-1,3u-1}(D_A)=H^{u-1,3u}(D) \oplus \mathbb{Z}$.
An analogous argument works for the case $(i,j)=(u-1,3u-2)$, where $H^{u-1,3u-3}(D_A)=H^{u-1,3u-2}(D)$ or $H^{u-1,3u-3}(D_A)=H^{u-1,3u-2}(D) \oplus \mathbb{Z}.$
\end{proof}
An example of a link diagram $D$ with $D_{B}$ the trivial knot is illustrated by the link $T^{(k)}(m,sm \pm 2)$ for any integer $k$, $m\geq 2$ and $s\geq0.$ See Figure \ref{Tk46toTrivial}.

\subsection{Khovanov homology of T\textsuperscript{(k)}(m,m+2)}\label{3.1}
We observe that the link $T^{(k)}_{B}(m,m+2)$ is equivalent to the trivial knot with writhe
$$w(m,k)=w(T^{(k)}_{B}(m,m+2))=-1-2\lfloor\frac{m}{2}\rfloor-k=\left\{
                                                                 \begin{array}{ll}
                                                                   -m-k, & \hbox{if $m$ is odd;} \\
                                                                   -1-m-k, & \hbox{if $m$ is even.}
                                                                 \end{array}
                                                               \right.
$$

\ \\
\begin{figure}[ht]
\centering
\includegraphics[scale=1]{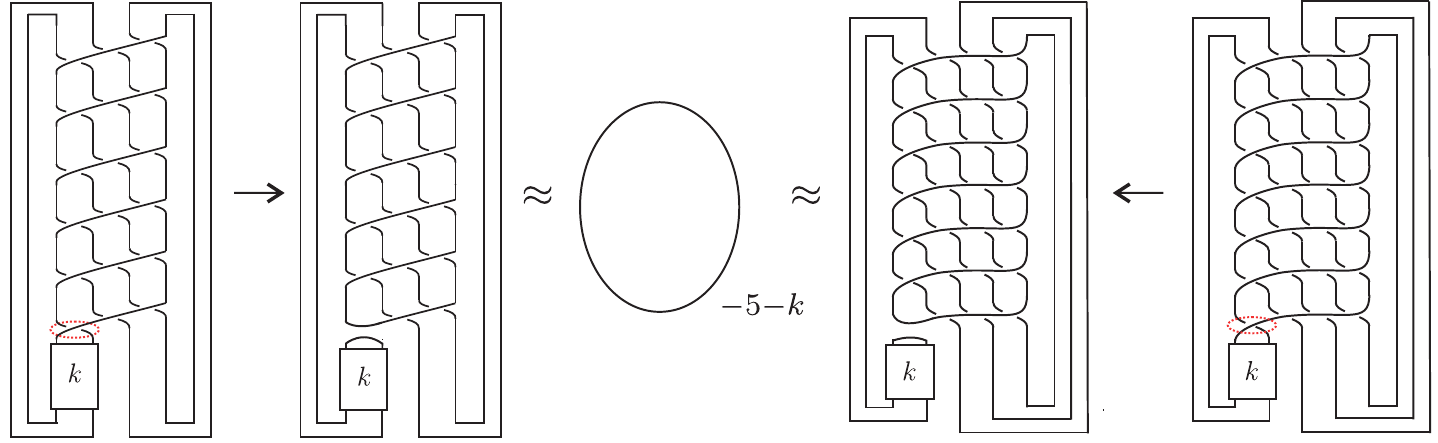}
\caption{The reductions of $T^{(k)}_{B}(m,m+2)$ to the framed trivial knot $\bigcirc_{-5-k}$ for the cases $m=4,5$.}
\label{Tk46toTrivial}
\end{figure}

The sub-index $\nu$ in $D_{\nu}$ means that the blackboard framing of $D$ was changed by $\nu$, in particular the writhe number satisfies $w(D_{\nu})=w(D)+\nu$ (compare Figure \ref{Tk46toTrivial}).

The following results follow from Theorem \ref{Theorem 4.1}.
\begin{corollary}\label{Khovanov T(m,m+2)}
Let $u = u(m,k) = \lfloor \frac{m(m+2)}{2} \rfloor +k$. Then
\begin{enumerate}
  \item For any $k \in \mathbb{Z}$ and index $(i,j) \notin \{(u,3u),\, (u,3u-2), \, (u-1,3u), \, (u-1,3u-2)\},$ 
$$H^{i,j}(T^{(k)}(m,m+2))=H^{i,j-1}(T^{(k-1)}(m,m+2)).$$ 
\item  For any pair of integers $(i,j) \notin \{(u,3u) ,(u,3u-2)\},$
$$tor H^{i,j}(T^{(k)}(m,m+2)) = tor H^{i,j-1}(T^{(k-1)}(m,m+2)).$$
\end{enumerate}

\end{corollary}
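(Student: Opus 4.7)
The plan is to apply Theorem \ref{Theorem 4.1} to the skein triple obtained by smoothing one of the crossings in the $\sigma_1^k$ block of the braid word $(\sigma_{m-1}\sigma_{m-2}\cdots\sigma_1)^{m+2}\sigma_1^k$ representing $T^{(k)}(m,m+2)$. With $v$ chosen there, the $A$-smoothing replaces $\sigma_1^k$ by $\sigma_1^{k-1}$ and so produces $D_A = T^{(k-1)}(m,m+2)$, while the $B$-smoothing locally opens the first two strands and, by the cascade of Reidemeister moves illustrated in Figure \ref{Tk46toTrivial} for $m=4,5$, reduces to the trivial knot diagram with the framing $w_B = w(m,k)$ already recorded in the paragraph preceding the corollary.

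To match the shift in Theorem \ref{Theorem 4.1}, I would compute
\[
w = w(T^{(k)}(m,m+2)) = (m-1)(m+2) + k,
\]
since $(\sigma_{m-1}\cdots\sigma_1)^{m+2}$ contributes $(m-1)(m+2)$ positive crossings and $\sigma_1^k$ contributes $k$ signed crossings. Writing $\delta = 1$ if $m$ is odd and $\delta = 0$ if $m$ is even, a two-line case analysis on the parity of $m$ gives
\[
u = \frac{w - w_B + 1}{2} = \frac{m^2 + 2m - \delta + 2k}{2} = \left\lfloor \frac{m(m+2)}{2} \right\rfloor + k,
\]
which is precisely the $u(m,k)$ appearing in the corollary.

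With the hypotheses of Theorem \ref{Theorem 4.1} now verified for this triple, I would finish by invoking parts (1) and (3) of that theorem verbatim: part (1) yields the full isomorphism $H^{i,j}(T^{(k)}(m,m+2)) \cong H^{i,j-1}(T^{(k-1)}(m,m+2))$ outside the four bidegrees $\{(u,3u),(u,3u-2),(u-1,3u),(u-1,3u-2)\}$, and part (3) removes the two bidegrees $(u-1,3u)$ and $(u-1,3u-2)$ from the exceptional set at the level of torsion, since the possible discrepancy there is only a free $\mathbb{Z}$ summand.

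The only thing that really needs checking is that the $B$-smoothing genuinely gives an unknotted curve with the stated writhe for every $(m,k)$; Figure \ref{Tk46toTrivial} handles two representative cases and the remaining cases follow by the same argument (iterated Reidemeister I moves, each adjusting the framing by $\pm 1$), so I do not expect a substantive obstacle beyond this bookkeeping.
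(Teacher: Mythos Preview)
Your proposal is correct and follows exactly the approach the paper intends: the paper simply states that the corollary ``follows from Theorem \ref{Theorem 4.1},'' leaving the writhe computations and the identification of $u(m,k)$ with $\frac{w-w_B+1}{2}$ to the reader, and you have supplied precisely those details. One small point of phrasing: for $k\leq 0$ there is no positive crossing ``in the $\sigma_1^k$ block'' to smooth, but this is harmless since (as in the Remark following Lemma \ref{Corollary 2.2.}) one views $T^{(k)}(m,m+2)$ as $T^{(k-1)}(m,m+2)$ with one additional positive $\sigma_1$ crossing appended, and it is that crossing which plays the role of $v$ for every $k$.
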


%


Corollary \ref{Khovanov T(m,m+2)} allows us to find infinite families of knots and 2-component links whose Khovanov homology groups contain torsion different from $\mathbb{Z}_{2}.$

\begin{corollary}\label{T^{(k)}(4,6)}
The Khovanov homology of $T^{(k)}(4,6)$ contains $\mathbb{Z}_{4}$-torsion for $k \geq -3$. Specifically, $H^{9,28+k}(T^{(k)}(4,6))$ has $\mathbb{Z}_{4}$-torsion for $k \geq -3$.
\end{corollary}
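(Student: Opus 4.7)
The plan is to run Corollary \ref{Khovanov T(m,m+2)}(2) as a one-variable recursion in $k$, reducing the statement to a single explicit computation at one value of $k$. With $m=4$, we have $u(4,k)=\lfloor 24/2\rfloor+k = 12+k$, so the excluded bidegrees in part (2) of the corollary are
$$
\{(u,3u),(u,3u-2)\} = \{(12+k,\,36+3k),\,(12+k,\,34+3k)\}.
$$
The target index $(9,28+k)$ lies outside this excluded set exactly when $9\ne 12+k$, i.e.\ when $k\ne -3$. Therefore, for every integer $k\ge -2$, Corollary \ref{Khovanov T(m,m+2)}(2) applies and yields
$$
\operatorname{tor} H^{9,\,28+k}\!\bigl(T^{(k)}(4,6)\bigr) \;=\; \operatorname{tor} H^{9,\,27+k}\!\bigl(T^{(k-1)}(4,6)\bigr) \;=\; \operatorname{tor} H^{9,\,28+(k-1)}\!\bigl(T^{(k-1)}(4,6)\bigr).
$$
Iterating this identification shows that the $(9,28+k)$-torsion of $T^{(k)}(4,6)$ is the same abelian group for every $k\ge -3$, so it suffices to verify the presence of $\mathbb{Z}_4$ at one value of $k$ in the range.

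I would take $k=-3$ as the base case, since this is precisely the value at which the recursion breaks (the bad index $(u,3u-2)=(9,25)$ coincides with the target $(9,28+k)=(9,25)$), and because establishing it simultaneously handles the leftmost value in the claimed range. Alternatively, any convenient $k\ge -2$ could serve as the base, e.g.\ $k=0$, using the known fact that $T(4,6)$ carries $\mathbb{Z}_4$-torsion in its Khovanov homology at $H^{9,28}$. The diagram $T^{(-3)}(4,6)$ has $24-3=21$ crossings (with one further Reidemeister-II reduction possible after the $t_{-3}$-move near the $\sigma_1$'s), well within current computational reach of KnotJob/JavaKh; a direct tabulation — which is what the Appendix appears to supply — is what one cites here.

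Once the base case is in hand, the proof is a single line of induction using the displayed recursion above: for $k\ge -2$ the index-avoidance check is automatic (only $k=-3$ ever produces a bad collision), so the torsion at $(9,28+k)$ for $T^{(k)}(4,6)$ equals the torsion at $(9,25)$ for $T^{(-3)}(4,6)$, which contains $\mathbb{Z}_4$ by the base case.

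The main obstacle is not logical but computational: Theorem \ref{Theorem 4.1} and Corollary \ref{Khovanov T(m,m+2)} only propagate torsion; they cannot \emph{create} it. One must therefore import, from outside the general machinery, the fact that $\mathbb{Z}_4$ actually appears at the predicted bidegree for some representative of the family. If one wishes to avoid a black-box computer calculation, the cleanest alternative would be to invoke the four-term exact sequences in Theorem \ref{Theorem 4.1}(2) at the critical bidegrees $(u,3u)$ and $(u,3u-2)$ together with the known Khovanov homology of $T(4,5)$ or $T(4,6)$, and read off the $\mathbb{Z}_4$-summand from the resulting $\mathbb{Z}\to H\to H'\to 0$ sequences by an order count; but any purely theoretical derivation still bottoms out in one explicit Khovanov computation.
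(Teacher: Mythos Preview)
Your proposal is correct and follows essentially the same approach as the paper: apply Corollary~\ref{Khovanov T(m,m+2)}(2) as a recursion in $k$, identify $k=-3$ as the unique value where the target index $(9,28+k)$ collides with the excluded bidegree $(u,3u-2)=(9,25)$, and anchor the chain of torsion identifications at one explicitly computed value. The paper anchors at $k=0$ via Table~\ref{T64} rather than at $k=-3$, and additionally records from Table~\ref{T64m4} that $H^{9,24}(T^{(-4)}(4,6))$ has no $\mathbb{Z}_4$-torsion, confirming that the bound $k\ge -3$ is sharp.
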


\begin{proof}
The Khovanov homology of the torus link $T(4,6)$ has $\mathbb{Z}_{4}$-torsion in bidegree $(9,28)$, as shown in Table \ref{T64}. By Corollary \ref{Khovanov T(m,m+2)} (2) one checks that $\mathbb{Z}_{4}$ survives in bidegree $(9,28+k)$ if $k \geq -3.$ When $k=-3$, the pair $(i,j)=(9,25)$ is a critical pair. In fact, $H^{9,24}(T^{(-4)}(4,6))$ has no $\mathbb{Z}_{4}$-torsion as shown in Table \ref{T64m4}).
\end{proof}

\begin{corollary}\label{T^{(k)}(5,7)}

The Khovanov homology of $T^{(k)}(5,7)$ contains $\mathbb{Z}_{5}$-torsion for every integer $k.$ \\Specifically, $H^{11,39+k}(T^{(k)}(5,7))$ and $H^{12,43+k}(T^{(k)}(5,7))$  have $\mathbb{Z}_{5}$-torsion for all $k$.

\end{corollary}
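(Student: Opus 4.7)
The plan is to replicate the inductive strategy used in the proof of Corollary~\ref{T^{(k)}(4,6)}, starting from the $\mathbb{Z}_{5}$-torsion appearing in the Khovanov homology of $T(5,7)$ (displayed in an Appendix table) at bidegrees $(11,39)$ and $(12,43)$, and then iterating Corollary~\ref{Khovanov T(m,m+2)}(2) in both directions of $k$. With $m=5$, the relevant parameter is $u(5,k) = \lfloor 35/2 \rfloor + k = 17 + k$, so the critical bidegrees at the step from $T^{(k-1)}(5,7)$ to $T^{(k)}(5,7)$ are $(17+k,\,51+3k)$ and $(17+k,\,49+3k)$.

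First I would handle the bidegree $(12, 43+k)$. Matching first coordinates forces $k = -5$, but then $43+k = 38$ equals neither $51 + 3k = 36$ nor $49 + 3k = 34$. Hence $(12, 43+k)$ is never critical, and Corollary~\ref{Khovanov T(m,m+2)}(2) can be iterated without obstruction to transfer the $\mathbb{Z}_{5}$-torsion from bidegree $(12, 43)$ in $T(5,7)$ to bidegree $(12, 43+k)$ in $T^{(k)}(5,7)$ for every $k \in \mathbb{Z}$.

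Next I turn to the bidegree $(11, 39+k)$. A similar check shows that this pair equals $(17+k,\,51+3k)$ precisely when $k=-6$, and never equals $(17+k,\,49+3k)$. Thus the induction via Corollary~\ref{Khovanov T(m,m+2)}(2) proceeds unproblematically upward from $T(5,7)$ for all $k \geq 0$ and downward for $-6 \leq k \leq 0$, since at every intermediate step the first coordinate $11$ fails to equal $17+k$. Likewise, for $k \leq -7$ one has $17+k \leq 10$, so the propagation continues cleanly past $k=-6$ in the downward direction, provided we can first establish the $\mathbb{Z}_{5}$-torsion of $H^{11, 32}(T^{(-7)}(5,7))$.

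The main obstacle is therefore crossing the critical index $k=-6$, where $(11,33) = (u(5,-6),\,3u(5,-6))$. Here I would apply Theorem~\ref{Theorem 4.1}(2)(a) with $D = T^{(-6)}(5,7)$ and $D_A = T^{(-7)}(5,7)$, yielding the terminal four-term sequence
$$\mathbb{Z} \stackrel{\alpha_*}{\longrightarrow} H^{11,33}(T^{(-6)}(5,7)) \stackrel{\beta_*}{\longrightarrow} H^{11,32}(T^{(-7)}(5,7)) \longrightarrow 0.$$
The delicate point is to show that the $\mathbb{Z}_{5}$ summand of $H^{11,33}(T^{(-6)}(5,7))$ survives in the cokernel. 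Since $\operatorname{im}(\alpha_*)$ is cyclic, I would control its interaction with the $5$-primary part via a Smith normal form argument: unless $\operatorname{im}(\alpha_*)$ is finite cyclic of order divisible by $5$ and lies entirely inside the $5$-primary torsion of $H^{11,33}(T^{(-6)}(5,7))$, the quotient retains a $\mathbb{Z}_{5}$ summand. Ruling out this exceptional configuration should be possible either by a direct free-rank comparison forcing $\operatorname{im}(\alpha_*) \cong \mathbb{Z}$, or by cross-referencing with the parallel sequence of Theorem~\ref{Theorem 4.1}(2)(b) at the companion critical bidegree $(11,31)$ and with the already-established $\mathbb{Z}_{5}$-torsion at $(12, 37)$ in $T^{(-6)}(5,7)$. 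Once $H^{11, 32}(T^{(-7)}(5,7))$ is shown to carry $\mathbb{Z}_{5}$-torsion, Corollary~\ref{Khovanov T(m,m+2)}(2) completes the induction for all $k \leq -7$.
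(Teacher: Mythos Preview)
Your treatment of the bidegree $(12,43+k)$ and your identification of the single critical step at $k=-6$ for $(11,39+k)$ are both correct and match the paper's argument. The gap is in how you propose to cross that critical step.

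Your key algebraic claim --- that if $\operatorname{im}(\alpha_*)$ is not finite cyclic of order divisible by~$5$ sitting inside the $5$-primary torsion, then the quotient $H^{11,33}(T^{(-6)}(5,7))/\operatorname{im}(\alpha_*)$ retains a $\mathbb{Z}_5$ summand --- is false. For a concrete counterexample, take $G=\mathbb{Z}\oplus\mathbb{Z}_5$ and $C=\langle(5,1)\rangle\cong\mathbb{Z}$: then $G/C\cong\mathbb{Z}_{25}$, which has no $\mathbb{Z}_5$ summand in its primary decomposition (the paper's convention for ``$\mathbb{Z}_5$-torsion''). So even forcing $\operatorname{im}(\alpha_*)\cong\mathbb{Z}$ via a free-rank comparison would not finish the argument, and your vaguer ``cross-referencing'' suggestion does not supply a concrete mechanism either. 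The exact sequence alone simply does not control the $5$-primary part of the quotient well enough.

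The paper does not attempt any such algebraic argument. Instead it resolves the critical step by direct computation: Table~\ref{T75m7} displays the Khovanov homology of $T^{(-7)}(5,7)$ and shows that $H^{11,32}(T^{(-7)}(5,7))$ already contains $\mathbb{Z}_5$-torsion. From there Corollary~\ref{Khovanov T(m,m+2)}(2) propagates the torsion to all $k\leq -8$ exactly as you describe, since $u(5,k)=17+k\leq 10<11$ in that range. Your proof becomes complete once you replace the Smith-normal-form paragraph with an appeal to that computed table.
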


\begin{proof}

The Khovanov homology of the torus knot $T(5,7)$ has $\mathbb{Z}_{5}$-torsion in bidegree $(11,39)$ and $(12,43)$ (see Table \ref{T75}). Using Corollary \ref{Khovanov T(m,m+2)} (2), we check that if $k \geq -6,$ then $H^{11,39+k}(T^{(k)}(5,7))$ contains $\mathbb{Z}_{5}$-torsion.\footnote{We say that the finitely generated abelian group $G$ has $\mathbb{Z}_{p^{i}}$-torsion if $\mathbb{Z}_{p^{i}}$ is a summand in its primary decomposition.} From computation (Table \ref{T75m7}),  $H^{11,32}(T^{(-7)}(5,7))$ has $\mathbb{Z}_{5}$-torsion, and by using Corollary \ref{Khovanov T(m,m+2)} (2) again, we have that if $k \leq -8,$ then $H^{11,39+k}(T^{(k)}(5,7))$ contains $\mathbb{Z}_{5}$-torsion. In summary, we have proved that $H^{11,39+k}(T^{(k)}(5,7))$ contains $\mathbb{Z}_{5}$-torsion for every $k.$\ \\
$H^{12,43+k}(T^{(k)}(5,7))$ has $\mathbb{Z}_{5}$-torsion for every integer $k$ follows directly from Corollary \ref{Khovanov T(m,m+2)} (2).
\end{proof}

\begin{corollary}\label{T^{(k)}(6,8)}
The Khovanov homology of $T^{(k)}(6,8)$ contains
\begin{enumerate}
\item{$\mathbb{Z}_{3},\mathbb{Z}_{4},\mathbb{Z}_{5}$-torsion for every integer $k$. In particular, $\mathbb{Z}_3$-torsion appears in \\ $H^{19,60+k}(T^{(k)}(6,8))$ and $H^{20,64+k}(T^{(k)}(6,8))$ for every integer $k.$}
\item{$\mathbb{Z}_{8}$-torsion for $k \leq -8$}. Specifically, $H^{17,58+k}(T^{(k)}(6,8))$ has $\mathbb{Z}_{8}$-torsion for $k\leq -8$.
\end{enumerate} 
\end{corollary}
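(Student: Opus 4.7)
The plan is to follow the template of Corollaries~\ref{T^{(k)}(4,6)} and~\ref{T^{(k)}(5,7)}: anchor the argument on finitely many direct computations from the appendix tables and transport each torsion summand along the $t_k$-family using Corollary~\ref{Khovanov T(m,m+2)}(2). For $m=6$ the controlling quantity is $u(6,k)=24+k$, and the equality $tor H^{i,j}(T^{(k)}(6,8)) = tor H^{i,j-1}(T^{(k-1)}(6,8))$ fails only at the two critical pairs $(24+k,\,72+3k)$ and $(24+k,\,70+3k)$. My first step is to read off from the $T(6,8)$ appendix table the seed bidegrees carrying $\mathbb{Z}_3$, $\mathbb{Z}_4$ and $\mathbb{Z}_5$ summands, including the specific $\mathbb{Z}_3$-summands claimed at $(19,60)$ and $(20,64)$.

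For each target line $(i,\, j_0+k)$ I would determine the (finite) set of integers $k$ at which $(i,\, j_0+k)$ coincides with a critical pair. A direct check gives a single obstruction $k=-5$ for the $(19,\,60+k)$-line (where $(i,j_0+k)=(u,\,3u-2)$) and a single obstruction $k=-4$ for the $(20,\,64+k)$-line (where $(i,j_0+k)=(u,\,3u)$). Away from those values, Corollary~\ref{Khovanov T(m,m+2)}(2) transports the $\mathbb{Z}_3$-torsion in both directions from the seed at $k=0$. To bridge each obstruction I would invoke one further appendix computation for $T^{(k_0)}(6,8)$ at a value of $k_0$ lying just past the critical $k$, and then iterate the corollary; because for $|k|$ large the line $(i,\, j_0+k)$ stays away from the critical locus $i=24+k$, a single boundary seed suffices to reach all remaining $k$ on that side. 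The same bookkeeping yields the $\mathbb{Z}_4$- and $\mathbb{Z}_5$-claims from their own seed bidegrees in the $T(6,8)$ table.

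Part~(2) is a one-sided version of the same argument. The critical $k$ for the line $(17,\,58+k)$ is $k=-7$ (where $(i,j_0+k)=(u,\,3u)$), which lies outside the claimed range $k \leq -8$. I would therefore take an appendix computation verifying $\mathbb{Z}_8$ as a summand of $H^{17,\,50}(T^{(-8)}(6,8))$ as the seed and iterate Corollary~\ref{Khovanov T(m,m+2)}(2) downward: for every $k \leq -8$ the index $u=24+k$ is at most $16$, so the pair $(17,\,58+k)$ is never critical and the $\mathbb{Z}_8$-summand propagates to all such $k$.

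The main obstacle is bookkeeping rather than any new idea: I have to line up every torsion summand identified in the $T(6,8)$ table with its critical $k$-values, and check that the appendix supplies the right boundary computations (one on each side of each critical $k$ for part~(1), and the $T^{(-8)}(6,8)$ seed for part~(2)) so that iterating the corollary covers all of $\mathbb{Z}$ for part~(1) and all $k \leq -8$ for part~(2).
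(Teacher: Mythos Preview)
Your proposal is correct and follows essentially the same route as the paper's proof: seed computations from the $T(6,8)$ table (and the auxiliary tables $T^{(-5)}(6,8)$, $T^{(-6)}(6,8)$, $T^{(-8)}(6,8)$) combined with iterated application of Corollary~\ref{Khovanov T(m,m+2)}. Your identification of the critical values $k=-5$ for $(19,60+k)$, $k=-4$ for $(20,64+k)$, and $k=-7$ for $(17,58+k)$ is exactly what the paper uses, and your observation that the $\mathbb{Z}_4$ and $\mathbb{Z}_5$ seed bidegrees never meet the critical locus (so a single seed at $k=0$ suffices for them) matches the paper's one-line treatment of those cases; the only cosmetic difference is that for part~(2) the paper invokes Corollary~\ref{Khovanov T(m,m+2)}(1) to transport the entire group $\mathbb{Z}_2^2\oplus\mathbb{Z}_3\oplus\mathbb{Z}_8$, whereas you use part~(2) for the torsion, which is enough for the stated claim.
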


\begin{proof}\ \\
(1) The Khovanov homology of the torus link $T(6,8)$ contains $\mathbb{Z}_{3}$-torsion in bidegrees $(19,60)$ and $(20,64)$ (see Table \ref{T86}). We can show that if $k \geq -5,$ then the Khovanov homology of $T^{(k)}(6,8)$ has $\mathbb{Z}_{3}$-torsion in bidegree $(19,60+k)$ by Corollary \ref{Khovanov T(m,m+2)}(2). When $k =-6,$ $H^{19,54}(T^{(-6)}(6,8))$ (see Table \ref{T86m6}) has $\mathbb{Z}_3$-torsion. Using Corollary \ref{Khovanov T(m,m+2)}(2), we prove that \\ $H^{19,60+k}(T^{(k)}(6,8))$ contains $\mathbb{Z}_3$-torsion for $k \leq -7.$ In a similar way, we can prove that the Khovanov homology of $T^{(k)}(6,8)$ has $\mathbb{Z}_3$-torsion in bidegree $(20,64+k)$ for all $k$. See Table \ref{T86m5}.\

The Khovanov homology of the torus link $T(6,8)$ contains $\mathbb{Z}_{4}$-torsion in bidegree $(18,62)$ (see Table \ref{T86}). The Khovanov homology of $T^{(k)}(6,8)$ has $\mathbb{Z}_{4}$-torsion in bidegree $(18,62+k)$ by Corollary \ref{Khovanov T(m,m+2)}(2).\

The Khovanov homology of the torus link $T(6,8)$ contains $\mathbb{Z}_{5}$-torsion in bidegree $(12,54)$ (see Table \ref{T86}). The Khovanov homology of $T^{(k)}(6,8)$ has $\mathbb{Z}_{5}$-torsion in bidegree $(12,54+k)$ by Corollary \ref{Khovanov T(m,m+2)}(2).\ \\
\ \\
$(2)$ In the case of $\mathbb{Z}_{8}$-torsion, we compute that $H^{17,50}(T^{(-8)}(6,8))=\mathbb{Z}_{2}^{2}\oplus \mathbb{Z}_{3} \oplus \mathbb{Z}_{8}$ (see Table \ref{T86m8}). By Corollary \ref{Khovanov T(m,m+2)}(1), the Khovanov homology of $T^{(k)}(6,8)$ has this group in bidegree $(17,58+k)$ for $k\leq -8$.
\end{proof}

\begin{corollary}\label{T^{(k)}(7,9)} \ \\
The Khovanov homology of $T^{(k)}(7,9)$ contains $\mathbb{Z}_{3},\mathbb{Z}_{4},\mathbb{Z}_{5},\mathbb{Z}_{7}$-torsion for every integer $k$. In particular, $H^{15,67+k}(T^{(k)}(7,9))$, $H^{21,75+k}(T^{(k)}(7,9))$, $H^{22,79+k}(T^{(k)}(7,9))$, $H^{23,77+k}(T^{(k)}(7,9))$, and $H^{24,81+k}(T^{(k)}(7,9))$ have $\mathbb{Z}_{7}$-torsion for all $k$.
\end{corollary}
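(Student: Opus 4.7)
The proof will follow exactly the strategy used for Corollaries \ref{T^{(k)}(4,6)}, \ref{T^{(k)}(5,7)}, and \ref{T^{(k)}(6,8)}: take a known torsion element in the Khovanov homology of the base case $T(7,9)=T^{(0)}(7,9)$, computed from an appendix table, and propagate it using Corollary \ref{Khovanov T(m,m+2)}(2), stopping and restarting at any ``critical'' bidegree. For $m=7$ we have $u(7,k)=\lfloor 7\cdot 9/2\rfloor + k = 31+k$, so the only bidegrees in which propagation can fail are $(u,3u)=(31+k,93+3k)$ and $(u,3u-2)=(31+k,91+3k)$.

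The plan is first to look up in the tables for $T(7,9)$ a bidegree hosting $\mathbb{Z}_7$ (resp.\ $\mathbb{Z}_3,\mathbb{Z}_4,\mathbb{Z}_5$); then, for each of the five listed bidegrees $(15,67+k),(21,75+k),(22,79+k),(23,77+k),(24,81+k)$, check whether the pair $(i,j)$ ever equals a critical pair in $T^{(k)}(7,9)$. Setting $i=31+k$ forces a single candidate value of $k$ per bidegree, and one then compares $j=67+k,\ldots,81+k$ with $93+3k$ and $91+3k$. Most of the five bidegrees will never meet a critical pair, so Corollary \ref{Khovanov T(m,m+2)}(2) gives the equality
\[
\operatorname{tor} H^{i,j}(T^{(k)}(7,9)) \;=\; \operatorname{tor} H^{i,j-1}(T^{(k-1)}(7,9))
\]
for every $k$, and $\mathbb{Z}_7$-torsion simply slides along the $j$-axis as $k$ varies.

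The main obstacle will be those bidegrees for which the arithmetic $i = 31+k$, $j\in\{93+3k,91+3k\}$ \emph{does} have a solution in $k$. At each such value of $k$ the long exact sequence of part (2) of Theorem \ref{Theorem 4.1} couples $H^{i,j}(T^{(k)}(7,9))$ to the $\mathbb{Z}$-summand coming from the unknot $D_B$, so Corollary \ref{Khovanov T(m,m+2)}(2) does not a priori preserve torsion across that step. For each such exceptional $k$, I would compute $H^{i,j}(T^{(k)}(7,9))$ directly (or cite the appropriate appendix table, in the style of Tables \ref{T86m6}, \ref{T86m8}, \ref{T75m7}) to verify that the $\mathbb{Z}_7$-summand survives, and then restart the induction from that $k$ in the opposite direction, again via Corollary \ref{Khovanov T(m,m+2)}(2). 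The same procedure—locate a starting bidegree in $T(7,9)$, identify the finite set of obstructed $k$, and patch each obstructed step with a tabulated direct computation—handles the $\mathbb{Z}_3,\mathbb{Z}_4,\mathbb{Z}_5$ claims as well. The genuine difficulty is purely computational: producing the Khovanov homology of $T^{(k)}(7,9)$ at the exceptional $k$-values, since these diagrams already have on the order of 55 crossings.
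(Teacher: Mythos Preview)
Your plan is correct and coincides with the paper's argument: start from the computed table for $T(7,9)$, propagate torsion along the $j$-axis via Corollary~\ref{Khovanov T(m,m+2)}(2), and patch the finitely many obstructed steps with a direct computation. Carrying out your arithmetic, only the bidegree $(23,77+k)$ among the five meets a critical pair (at $k=-8$, where $(23,69)=(u,3u)$), and for $\mathbb{Z}_3$ the bidegree $(28,87+k)$ is likewise critical at $k=-3$; the paper handles these exactly as you propose, citing Tables~\ref{T97m9} and~\ref{T97m4}. One small sharpening: the blocked step is the passage from the critical $k$ to $k-1$, so the extra computation must be done at $k-1$ (e.g.\ $k=-9$, not $k=-8$), after which the induction resumes downward without further obstruction.
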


\begin{proof}

It is known that the Khovanov homology of the torus knot $T(7,9)$ has $\mathbb{Z}_{3}$-torsion in bidegrees $(26,85),(28,87),(29,91)$ (see Table \ref{T97}). The Khovanov homology of $T^{k}(7,9)$ has $\mathbb{Z}_{3}$-torsion in bidegrees $(26,85+k),(29,91+k)$ following directly from Corollary \ref{Khovanov T(m,m+2)}(2), and in bidegree $(28,87+k)$ following a similar proof as Corollary \ref{T^{(k)}(6,8)}(1). See Table \ref{T97m4}.\ \\

The Khovanov homology of the torus knot $T(7,9)$ has $\mathbb{Z}_{4}$-torsion in bidegrees $(21,75),\ (22,79),$ and $(26,85)$  (see Table \ref{T97}). The Khovanov homology of $T^{k}(7,9)$ has $\mathbb{Z}_{4}$-torsion in bidegrees $(21,75+k),(26,85+k)$, and $(22,79+k)$ following from Corollary \ref{Khovanov T(m,m+2)}(2).\ \\

The Khovanov homology of the torus knot $T(7,9)$ has $\mathbb{Z}_{5}$-torsion in bidegrees $(12,67),$ $(22,79)$, $(24,81)$, and $(25,85)$  (see Table \ref{T97}). The Khovanov homology of $T^{k}(7,9)$ has $\mathbb{Z}_{5}$-torsion in bidegrees $(12,67+k),(22,79+k),(25,85+k)$, and $(24,81+k)$ following from Corollary \ref{Khovanov T(m,m+2)}(2).\ \\

The Khovanov homology of the torus knot $T(7,9)$ has $\mathbb{Z}_{7}$-torsion in bidegrees $(15,67),\ (21,75),\\ (22$ ,$79)$,
$(23,77),$ and $(24,81)$  (see Table \ref{T97}). The Khovanov homology of $T^{k}(7,9)$ has $\mathbb{Z}_{7}$-torsion in bidegrees $(15,67+k),(22,79+k),$ $(21,75+k),$ and $(24,81+k)$ following from Corollary  \ref{Khovanov T(m,m+2)}(2). In a similar way as the proof of Corollary \ref{T^{(k)}(6,8)}(1), one can show that the Khovanov homology of $T^{(k)}(7,9)$ has $\mathbb{Z}_{7}$-torsion in bidegree $(23,77+k)$ for every $k$. See Table \ref{T97m9}.
\end{proof}

We conjecture about the shape of the Khovanov homology of $H^{i,j}(T^{(k)}(m,m+2))$ for $k \geq 0.$ We start from the case of $k=0.$

\begin{conjecture}\label{Conjecture 3.5}
\
\begin{enumerate}
\item[(1)]
$H^{i,j}(T(m,m+2))=0$ if $i>u(m,0)$ or $j>3u(m,0).$
\item[(2)]
\begin{equation*}\label{1.1}
H^{u(m,0),3u(m,0)}(T(m,m+2))=
\begin{cases*}
 \mathbb{Z} & if m is odd or is equal to 2,\\
 \mathbb{Z}^2& otherwise.
\end{cases*}
\end{equation*}
\item[(3)]
For $m\geq 4$, the Khovanov homology of the torus link $T(m,m+2)$ has $\mathbb{Z}_{m}$-torsion in some bidegree with $i<u(m,0)$ and $j<3u(m,0)$.
\end{enumerate}
\end{conjecture}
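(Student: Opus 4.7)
The plan is to treat parts (1) and (2) together via an induction driven by the long exact sequence of Theorem \ref{Theorem 4.1}, and to regard part (3) as a considerably harder problem requiring essentially new ideas.

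For parts (1) and (2), I would isolate the last $\sigma_{1}$-crossing in the braid word $(\sigma_{m-1}\cdots\sigma_{1})^{m+2}$ for $T(m,m+2)$. By the reduction illustrated in Figure \ref{Tk46toTrivial} (taken at $k = 0$), the $B$-smoothing of this crossing yields the framed trivial knot, while its $A$-smoothing yields $T^{(-1)}(m,m+2)$; thus Theorem \ref{Theorem 4.1}(1) gives $H^{i,j}(T(m,m+2)) \cong H^{i,j-1}(T^{(-1)}(m,m+2))$ outside the four critical bidegrees near $(u,3u)$ with $u = u(m,0)$. Iterating Corollary \ref{Khovanov T(m,m+2)} with $k$ decreasing to a sufficiently negative value, the $\sigma_{1}^{k}$-block absorbs the entire first column of positive $\sigma_{1}$'s, and a Markov destabilization identifies the terminal diagram with a split sum of an unknot and the smaller torus link $T(m-1, m+1)$. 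Part (1) would then follow inductively by transporting the vanishing back up through the $t_{k}$-moves, with the base case $m = 2$ handled directly from the known Khovanov homology of $T(2,4)$ and a nested induction on $m$ supplying the required vanishing for $T(m-1,m+1)$.

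For part (2), with (1) in place the extremal group is pinned down by the short exact sequences (2a)--(2b) of Theorem \ref{Theorem 4.1}. The inductive vanishing forces the connecting map $\gamma_{*}$ out of the trivial-knot summand $\mathbb{Z}$ to account entirely for $H^{u,3u}(T(m,m+2))$, and one then reads off the rank from the number of components of $T(m,m+2)$: a knot (when $m$ is odd), contributing $\mathbb{Z}$; a two-component link (when $m$ is even and $\geq 4$), contributing $\mathbb{Z}^{2}$ via a second orientation-induced generator at the extremal quantum degree; the anomalous case $m = 2$, grouped with the odd case, would be verified directly from the explicit table for $T(2,4)$.

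Part (3) is the expected main obstacle. Corollary \ref{Khovanov T(m,m+2)}(2) guarantees that torsion at any generic bidegree is invariant under $t_{k}$-moves, so one may work at whichever $k$ is most convenient. A natural approach, motivated by the known $\mathbb{Z}_{p}$-torsion in $T(p,p+1)$ for small primes and by the Bockstein analyses of \cite{PrSa,Shu3}, is to produce a cycle supported on enhanced states whose underlying smoothing contains $m$ symmetric circles, and to detect a nontrivial Bockstein $\beta \colon H^{*,*}(T(m,m+2);\mathbb{Z}_{m}) \to H^{*+1,*}(T(m,m+2);\mathbb{Z})$ landing at the predicted bidegree. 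Producing such a class uniformly in $m$ is essentially the content of the longstanding Bar-Natan--Shumakovitch--Lewark conjecture recalled in the introduction, and a structural proof appears to require ideas beyond the long-exact-sequence bookkeeping that suffices for parts (1) and (2).
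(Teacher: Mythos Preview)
This statement is presented in the paper as a \emph{conjecture}, not a theorem: the authors give no proof and only record that it has been verified computationally for $2 \leq m < 8$. So there is no proof in the paper to compare against; your proposal attempts to go well beyond what the paper establishes.

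For parts (1) and (2), iterating Corollary~\ref{Khovanov T(m,m+2)} is the natural first move, and you are right that for $i > u(m,0)$ (or $j > 3u(m,0)$) the chain of isomorphisms $H^{i,j}(T^{(0)}) \cong H^{i,j-N}(T^{(-N)})$ never meets a critical bidegree. The gap is in your termination step. Decreasing $k$ does \emph{not} cause the negative block $\sigma_{1}^{k}$ to cancel the $m+2$ internal copies of $\sigma_{1}$ in $(\sigma_{m-1}\cdots\sigma_{1})^{m+2}$: those letters are separated by noncommuting generators, so no free reduction occurs, and $T^{(k)}(m,m+2)$ is not a split link for any $k$. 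Concretely, for $m = 3$ the underlying permutation of $(\sigma_{2}\sigma_{1})^{5}\sigma_{1}^{k}$ in $S_{3}$ is a $3$-cycle when $k$ is even and a transposition when $k$ is odd, so $T^{(k)}(3,5)$ has one or two components for every $k$; your proposed terminal object, the split sum of an unknot and $T(2,4)$, has three. Thus the claimed Markov destabilization and the inductive base case both fail, and without a valid terminal link the infinite chain of isomorphisms yields no vanishing. (Even if one instead $A$-smoothed every internal $\sigma_{1}$ directly, the result would be an unknot split from $T(m-1,m+2)$, not $T(m-1,m+1)$, so the induction on $m$ would not close up either.) Your argument for part (2) inherits this gap, and the further claim that the extremal rank is read off from the number of components is asserted rather than proved; note that it already fails for $m=2$, which has two components but is conjectured to give rank one.

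You are right that part (3) is the substantive open problem here; the paper makes no attempt at it beyond the computational evidence.
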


From computational data, Conjecture \ref{Conjecture 3.5} holds when $2\leq m<8$.

\begin{corollary} \label{y}
Assuming Conjecture \ref{Conjecture 3.5} holds for fixed $m$, we have
\begin{enumerate}
  \item $H^{i,j}(T^{(k)}(m,m+2))=0$ if $i>u(m,k)$ or $j>3u(m,k)$.
  \item For $k>0,$ $H^{u(m,k),3u(m,k)}(T^{(k)}(m,m+2))=\mathbb{Z}$.
  \item For $k\geq0$ and $m \geq 4,$ the Khovanov homology of the torus link $T^{(k)}(m,m+2)$ has $\mathbb{Z}_{m}$-torsion.
\end{enumerate}
\end{corollary}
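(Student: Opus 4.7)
The plan is to prove all three parts by induction on $k\geq 0$, with Conjecture \ref{Conjecture 3.5} supplying the base case $k=0$. The key arithmetic is $u(m,k)=u(m,k-1)+1$, so passing from $T^{(k-1)}(m,m+2)$ to $T^{(k)}(m,m+2)$ shifts the ``top-corner'' bidegree $(u,3u)$ by $(1,3)$. The inductive step in each part is driven by Corollary \ref{Khovanov T(m,m+2)} together with Theorem \ref{Theorem 4.1}(2)(a), applied with $D=T^{(k)}(m,m+2)$ and $D_A=T^{(k-1)}(m,m+2)$.

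For part (1), I would first observe that the forbidden region $\{(i,j):i>u(m,k)\text{ or }j>3u(m,k)\}$ is disjoint from the four exceptional bidegrees $\{(u,3u),(u,3u-2),(u-1,3u),(u-1,3u-2)\}$ of Corollary \ref{Khovanov T(m,m+2)}(1), since all four satisfy $i\leq u(m,k)$ and $j\leq 3u(m,k)$. Hence Corollary \ref{Khovanov T(m,m+2)}(1) gives $H^{i,j}(T^{(k)}(m,m+2))\cong H^{i,j-1}(T^{(k-1)}(m,m+2))$ for every $(i,j)$ in the forbidden region, and the right-hand side vanishes by the inductive hypothesis: when $i>u(m,k)$ one has $i>u(m,k-1)$, and when $j>3u(m,k)$ one has $j-1\geq 3u(m,k)>3u(m,k-1)$.

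For part (2), with $k>0$ and part (1) already in hand, I would apply Theorem \ref{Theorem 4.1}(2)(a) with $u=u(m,k)$, obtaining
$$0\to H^{u-1,3u}(D)\to H^{u-1,3u-1}(D_A)\to\mathbb{Z}\to H^{u,3u}(D)\to H^{u,3u-1}(D_A)\to 0.$$
The two outer groups on $D_A=T^{(k-1)}(m,m+2)$ vanish by part (1): in each of the bidegrees $(u-1,3u-1)=(u(m,k-1),\,3u(m,k-1)+2)$ and $(u,3u-1)=(u(m,k-1)+1,\,3u(m,k-1)+2)$ the coordinates exceed the $u(m,k-1)$-bounds. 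The sequence thus collapses to $0\to\mathbb{Z}\to H^{u,3u}(D)\to 0$, yielding $H^{u(m,k),3u(m,k)}(T^{(k)}(m,m+2))=\mathbb{Z}$.

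For part (3), I would carry a $\mathbb{Z}_m$-summand across each twist. Suppose inductively that $\mathbb{Z}_m$ appears as a summand of $H^{i_{k-1},j_{k-1}}(T^{(k-1)}(m,m+2))$ with $i_{k-1}<u(m,k-1)$ and $j_{k-1}<3u(m,k-1)$; the base case is Conjecture \ref{Conjecture 3.5}(3). Because $i_{k-1}<u(m,k-1)<u(m,k)$, the pair $(i_{k-1},j_{k-1}+1)$ avoids the two exceptional bidegrees in Corollary \ref{Khovanov T(m,m+2)}(2), so the torsion equality places $\mathbb{Z}_m$ as a summand of $H^{i_{k-1},j_{k-1}+1}(T^{(k)}(m,m+2))$, while the strict bounds $i_{k-1}<u(m,k)$ and $j_{k-1}+1\leq 3u(m,k-1)<3u(m,k)$ continue to hold for the next step. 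I do not anticipate a serious obstacle here; the whole argument is formal manipulation of the long exact sequence, and the only bookkeeping required is to verify that the exceptional bidegrees of Corollary \ref{Khovanov T(m,m+2)} never collide with the indices used in the induction, which is exactly what the strict inequalities ensure.
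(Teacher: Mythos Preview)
Your proposal is correct and follows exactly the approach the paper intends: the paper's proof is the one-line remark that the argument proceeds ``in a similar way as the proof of Theorem \ref{Theorem 4.1}'' using Conjecture \ref{Conjecture 3.5} and the long exact sequence, and what you have written is precisely that argument spelled out---induction on $k$, with Corollary \ref{Khovanov T(m,m+2)} (equivalently Theorem \ref{Theorem 4.1} applied to $D=T^{(k)}(m,m+2)$) supplying the inductive step. Your bookkeeping that the exceptional bidegrees never collide with the indices in play is accurate, and your handling of the sequence in Theorem \ref{Theorem 4.1}(2)(a) for part (2) is exactly right.
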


\begin{proof}
Using Conjecture \ref{Conjecture 3.5} and the long exact sequence of Khovanov homology, the proof proceeds in a similar way as the proof of Theorem \ref{Theorem 4.1}.
\end{proof}

We continue with a conjecture on how to obtain the entire Khovanov homology of\ \\ $T^{(k)}(m,m+2)$ from the Khovanov homology of $T^{(k-1)}(m,m+2).$

\begin{conjecture}\label{Conjecture 3.7} For $k > 0,$ $H^{i,j}(T^{(k)}(m,m+2))=H^{i,j-1}(T^{(k-1)}(m,m+2))$ with the following exceptions:\\
(1) $H^{u(m,k), 3u(m,k)}(T^{(k)}(m,m+2)) = \mathbb{Z}$ while $H^{u(m,k), 3u(m,k)}(T^{(k-1)}(m,m+2))=0.$ \\
(2) We have two cases depending on the parity of $(m-k)$:
\begin{enumerate}
\item[(a)]
for even $(m-k),$ we have $H^{u(m,k), 3u(m,k)-2}(T^{(k)}(m,m+2)) = \mathbb{Z}$\\ while $H^{u(m,k), 3u(m,k)-3}(T^{(k-1)}(m,m+2)) =0,$
\item[(b)]
for odd $(m-k),$ we have $H^{u(m,k), 3u(m,k)-2}(T^{(k)}(m,m+2)) = \mathbb{Z}_2$\\ while $H^{u(m,k), 3u(m,k)-2}(T^{(k-1)}(m,m+2)) =0$ and \\
$H^{u(m,k)-1,3u(m,k)-2}(T^{(k)}(m,m+2))\oplus \mathbb{Z} = H^{u(m,k)-1,3u(m,k)-3}(T^{(k-1)}(m,m+2)).$
\end{enumerate}
\end{conjecture}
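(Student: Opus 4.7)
The plan is to apply Theorem~\ref{Theorem 4.1} to the pair $D = T^{(k)}(m,m+2)$ and $D_A = T^{(k-1)}(m,m+2)$, with $D_B$ the framed trivial knot (guaranteed by the $B$-smoothing number one property of $T^{(k)}(m,m+2)$). Set $u = u(m,k) = u(m,k-1)+1$. Theorem~\ref{Theorem 4.1}(1) immediately yields the generic equality $H^{i,j}(D) = H^{i,j-1}(D_A)$ for every $(i,j)$ outside the four critical bidegrees $\{(u,3u),\,(u,3u-2),\,(u-1,3u),\,(u-1,3u-2)\}$. It then remains only to analyse the two four-term exact sequences from Theorem~\ref{Theorem 4.1}(2) at these bidegrees and to compare each term with the corresponding one for $D_A$.

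For sequence (a), Corollary~\ref{y}(1) forces $H^{u-1,3u-1}(D_A) = H^{u,3u-1}(D_A) = 0$, since both bidegrees lie strictly above the top homological or polynomial degree of $D_A$. The sequence collapses to $0 \to H^{u-1,3u}(D) \to 0 \to \mathbb{Z} \to H^{u,3u}(D) \to 0$, yielding $H^{u-1,3u}(D) = 0$ and $H^{u,3u}(D) \cong \mathbb{Z}$, which recovers part~(1) of the conjecture and is consistent with Corollary~\ref{y}(2). For sequence (b), Corollary~\ref{y}(1) gives $H^{u,3u-3}(D_A) = 0$, and Corollary~\ref{y}(2) (for $k \geq 2$; the case $k=1$ is settled by direct appeal to Conjecture~\ref{Conjecture 3.5}(2)) gives $H^{u-1,3u-3}(D_A) = \mathbb{Z}$. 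Thus sequence (b) reduces to
\begin{equation*}
0 \longrightarrow H^{u-1,3u-2}(D) \longrightarrow \mathbb{Z} \stackrel{\gamma_*}{\longrightarrow} \mathbb{Z} \longrightarrow H^{u,3u-2}(D) \longrightarrow 0,
\end{equation*}
and the remaining content of the conjecture is precisely the statement that $\gamma_*$ is the zero map when $m-k$ is even and multiplication by $\pm 2$ when $m-k$ is odd.

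Determining $\gamma_*$ is the main obstacle. I would attack it by writing down explicit cocycle representatives on $D_A$, $D$, and $D_B$, following the snake-lemma construction implicit in the proof of Theorem~\ref{Theorem 4.1}. The generator of $H^{u(m,k-1),3u(m,k-1)}(D_A)$ should correspond to the all-$A$ Kauffman state of $D_A$ with all circle signs $+1$, while the target $\mathbb{Z}$ is the top generator of $H^*(D_B)$ for the framed trivial knot. Lifting the $D_A$-cocycle to a cochain on $D$, applying the $D$-differential, and projecting via $\alpha^{-1}$ onto $D_B$ produces a signed sum of enhanced states whose signs are determined by the parity of the number of $B$-labelled crossings preceding the distinguished crossing in the chosen ordering. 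The parity of $m-k$ controls whether these contributions cancel in pairs (giving $\gamma_* = 0$) or leave a residue of multiplicity two (giving $\gamma_* = \pm 2$). A parallel induction on $k$ should then close the argument, since the step $k \mapsto k+1$ flips both the parity of $m-k$ and the expected alternation between cases~(2a) and~(2b); the base case $k=1$ can be settled from Conjecture~\ref{Conjecture 3.5} together with a direct combinatorial count of the relevant enhanced states.
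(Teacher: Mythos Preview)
Your reduction via Theorem~\ref{Theorem 4.1} and Corollary~\ref{y} is exactly what the paper does: it too arrives at the short sequence
\[
0 \longrightarrow H^{u-1,3u-2}(T^{(k)}) \longrightarrow \mathbb{Z} \stackrel{\gamma_*}{\longrightarrow} \mathbb{Z} \longrightarrow H^{u,3u-2}(T^{(k)}) \longrightarrow 0
\]
and observes that the dichotomy in part~(2) of the conjecture is precisely the assertion that $\gamma_*(1)=0$ when $m-k$ is even and $\gamma_*(1)=\pm 2$ when $m-k$ is odd. The paper stops there: this is stated as a \emph{conjecture}, verified only computationally for $m\le 7$ and small $k$, and no argument for the value of $\gamma_*$ is offered. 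So up to the point where you write ``Determining $\gamma_*$ is the main obstacle,'' you and the paper are in complete agreement.

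Where you go further, however, the argument is not a proof. The snake-lemma description of $\gamma_*$ is correct in principle, but the sentence ``The parity of $m-k$ controls whether these contributions cancel in pairs \ldots\ or leave a residue of multiplicity two'' is a restatement of what is to be proved, not a demonstration of it: you have not identified which enhanced states actually appear after applying the differential and projecting to $D_B$, nor computed their signs. The proposed induction on $k$ does not help either, because knowing the value of $\gamma_*$ for $T^{(k)}$ gives no direct information about $\gamma_*$ for $T^{(k+1)}$; the connecting map is computed afresh from the chain-level data at each step. There is also a small slip in the base case: for $k=1$ and $m$ even with $m>2$, Conjecture~\ref{Conjecture 3.5}(2) gives $H^{u-1,3u-3}(D_A)=\mathbb{Z}^2$, not $\mathbb{Z}$, so your reduced sequence~(b) is not valid there. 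The paper explicitly excludes this case (writing ``assuming $k>1$, or $k=1$ and $m$ odd''); you should do the same, or treat it separately.
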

The conjecture holds for $m=2$ \cite{Kho1}. We have verified this conjecture for $m=3,4,5,6,7,$ and `small' $k$.
We can partially prove the conjecture analyzing the following exceptional indices using Conjecture \ref{Conjecture 3.5} and Corollary \ref{y}:
$$(i,j)= (u,3u),(u,3u-2),(u-1,3u), (u-1,3u-2) \mbox{ where } u=u(m,k)=\lfloor\frac{m(m+2)}{2}\rfloor +k.$$
To analyze Khovanov homology at these indices, we consider the following pieces of the long exact sequence of Khovanov homology\footnote{We write $T^{(k)}$ for $T^{(k)}(m,n)$ in the following long exact sequences.}:
\\(1)
$$0 \longrightarrow H^{u-1,3u}(T^{(k)}) \stackrel{\beta_*}{\longrightarrow} H^{u-1,3u-1}(T^{(k-1)}) \stackrel{\gamma_*}{\longrightarrow} \mathbb{Z}
\stackrel{\alpha_*}{\longrightarrow} H^{u,3u}(T^{(k)})\stackrel{\beta_*}{\longrightarrow} H^{u,3u-1}(T^{(k-1)}) \longrightarrow 0,$$ and
\\(2)
$$0 \to H^{u-1,3u-2}(T^{(k)}) \stackrel{\beta_*}{\longrightarrow} H^{u-1,3u-3}(T^{(k-1)}) \stackrel{\gamma_*}{\longrightarrow} \mathbb{Z} \stackrel{\alpha_*}{\longrightarrow} H^{u,3u-2}(T^{(k)}) \stackrel{\beta_*}{\longrightarrow} H^{u,3u-3}(T^{(k-1)}) \to 0.$$

By Corollary \ref{y}, we have $H^{u-1,3u-1}(T^{(k-1)})=H^{u,3u-1}(T^{(k-1)})=0.$ Thus, sequence $(1)$ reduces to:
$$0 \longrightarrow H^{u-1,3u}(T^{(k)}) \longrightarrow 0 \longrightarrow \mathbb{Z} \longrightarrow H^{u,3u}(T^{(k)}) \longrightarrow 0.$$
Thus, $H^{u-1,3u}(T^{(k)})=0$ and $H^{u,3u}(T^{(k)})=\mathbb{Z}$. \\

Again, by Corollary \ref{y} assuming $k>1$, or $k=1$ and $m$ odd, we have $H^{u-1,3u-3}(T^{(k-1)})=\mathbb{Z}$ and $H^{u,3u-3}(T^{(k-1)})=0.$ Thus sequence $(2)$ transforms into, $$0 \longrightarrow H^{u-1,3u-2}(T^{(k)}) \stackrel{\beta_*}{\longrightarrow} \mathbb{Z} \stackrel{\gamma_*}{\longrightarrow} \mathbb{Z} \stackrel{\alpha_*}{\longrightarrow} H^{u,3u-2}(T^{(k)})\longrightarrow 0.$$
This sequence gives various choices for $H^{u,3u-2}(T^{(k)})$ and $H^{u-1,3u-2}(T^{(k)})$ depending on the function $\gamma_*:\mathbb{Z} \longrightarrow \mathbb{Z}$. Let $\gamma _*(1)=s$, $s\geq 0$. Then, we have two basic possibilities:\\
(i) $s=0$. Then, $H^{u,3u-2}(T^{(k)})=\mathbb{Z}=H^{u-1,3u-2}(T^{(k)})$. Until now, we assumed that Conjecture \ref{Conjecture 3.5} holds. Conjecture \ref{Conjecture 3.7} implies that this case holds when $(m-k)$ is even.\\
(ii) $s>0$. Then, $\gamma_*$ is a monomorphism. Hence, $H^{u-1,3u-2}(T^{(k)})=0$ and $H^{u,3u-2}(T^{(k)})=\mathbb{Z}_s$. Until now, we assumed that Conjecture \ref{Conjecture 3.5} holds. Conjecture \ref{Conjecture 3.7} implies that this case holds when $(m-k)$ is odd with $s=2$.

\subsection{Khovanov homology of T\textsuperscript{(k)}(m,m+1) for m = 4, 5, 7 }\label{3.2}

The torus knots $T(m,m+1)$ were the first knots in which torsion different from ${\mathbb Z}_2$ was
observed, namely ${\mathbb Z}_4$ in $H^{**}(T(4,5))$, ${\mathbb Z}_3$ and ${\mathbb Z}_5$ in  $H^{**}(T(5,6))$,
and ${\mathbb Z}_7$ in $H^{**}(T(7,8))$.  However, finding  Khovanov
homology of $T^{(k)}(m,m+1)$ is more difficult than that for  $T^{(k)}(m,m+2)$
as $T^{(k)}_{B}(m,m+1)$ is not the trivial knot.
In this subsection, we will prove existence of ${\mathbb Z}_4$-torsion in the Khovanov homology of $T^{(k)}(4,5)$ when $k \geq 0$
and ${\mathbb Z}_5$-torsion in the Khovanov homology of $T^{(k)}(5,6)$ for  $k \geq 0$.

Before this, we observe the coincidences between $T(m,m+1)$ and $T(m,m+2)$ for properly
chosen twists.

\begin{proposition} \label{prop}
        \item[(1)] $T^{(k)}(3,4)= T^{(k-2)}(3,5)$. These links have $\mathbb{Z}_3$-torsion
         in odd Khovanov homology for $k \geq 0$ (see Section \ref{Section 5}).
	\item[(2)] $T^{(2)}(4,5)= T^{(-1)}(4,6)$, with ${\mathbb Z}_4$-torsion in Khovanov homology.
	\item[(3)] $T^{(2)}(5,6)= T^{(-2)}(5,7)$, with ${\mathbb Z}_5$-torsion in Khovanov homology (see Figure \ref{Jozef-reduction}
	illustrating the equivalence of these knots).
	\end{proposition}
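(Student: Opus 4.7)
The plan is to split the proposition into two tasks: first, establish the three link equivalences, and second, harvest the torsion conclusions from results already proved in the paper (while deferring the odd Khovanov claim of (1) to Section \ref{Section 5}). The recurring tool for the equivalences is the centrality of the full twist $\Delta_m^2=(\sigma_{m-1}\sigma_{m-2}\cdots\sigma_1)^m$ in $B_m$; combined with the braid relations and cyclic conjugation under $\widehat{\;\cdot\;}$, this reduces each case to a short symbolic reduction.

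For (1), I write $T^{(k)}(3,4)$ as the closure of $(\sigma_2\sigma_1)^3\cdot(\sigma_2\sigma_1)\sigma_1^k$ and $T^{(k-2)}(3,5)$ as the closure of $(\sigma_2\sigma_1)^3\cdot(\sigma_2\sigma_1)^2\sigma_1^{k-2}$. Since the common factor $(\sigma_2\sigma_1)^3=\Delta_3^2$ is central in $B_3$, it suffices to compare the trailing braids. The relation $\sigma_2\sigma_1\sigma_2=\sigma_1\sigma_2\sigma_1$ converts $(\sigma_2\sigma_1)^2\sigma_1^{k-2}$ into $\sigma_1\sigma_2\sigma_1^k$, and a single cyclic conjugation moving the initial $\sigma_1$ to the end yields $\sigma_2\sigma_1^{k+1}=(\sigma_2\sigma_1)\sigma_1^k$, matching the first braid.

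For (2), the same strategy in $B_4$ reduces the problem, modulo the central factor $\Delta_4^2=(\sigma_3\sigma_2\sigma_1)^4$, to identifying the closures of $(\sigma_3\sigma_2\sigma_1)\sigma_1^2$ and $(\sigma_3\sigma_2\sigma_1)^2\sigma_1^{-1}$. After cancelling the $\sigma_1\sigma_1^{-1}$ in the second expression, a sequence of moves using $\sigma_1\sigma_3=\sigma_3\sigma_1$, $\sigma_3\sigma_2\sigma_3=\sigma_2\sigma_3\sigma_2$, $\sigma_2\sigma_1\sigma_2=\sigma_1\sigma_2\sigma_1$, and a cyclic conjugation brings it to $\sigma_3\sigma_2\sigma_1^3$, as required. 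For (3), the analogous reduction in $B_5$ modulo $\Delta_5^2$ requires a longer string of far-commutations and braid relations, and Figure \ref{Jozef-reduction} is intended to record this reduction pictorially — I would rely on that figure rather than spell out every intermediate braid word.

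Once the link equivalences are in place, the torsion conclusions in (2) and (3) are immediate: $\mathbb{Z}_4$-torsion for $T^{(-1)}(4,6)$ is the case $k=-1$ of Corollary \ref{T^{(k)}(4,6)}, and $\mathbb{Z}_5$-torsion for $T^{(-2)}(5,7)$ is the case $k=-2$ of Corollary \ref{T^{(k)}(5,7)}. The $\mathbb{Z}_3$-torsion assertion in odd Khovanov homology for (1) is a forward reference to the results proved in Section \ref{Section 5}, so nothing more is required here. The main obstacle is (3): writing down the braid word manipulation in a way that is both compact and convincing, which is presumably why the author relies on a diagram; I would treat the figure as the principal ingredient and supplement it only with a verification that writhes, strand numbers, and component counts match on both sides.
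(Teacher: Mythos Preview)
Your proposal is correct and matches the paper's approach. The paper itself gives no written proof of this proposition: it simply states the result, refers to Figure~\ref{Jozef-reduction} for the braid reduction in case~(3), and relies on the cross-references to Section~\ref{Section 5} and the earlier corollaries for the torsion claims. Your argument---using centrality of the full twist $\Delta_m^2$ together with the braid relations and cyclic conjugation under closure---is exactly the algebraic content that the figure encodes (the caption says ``operations on braids allowing conjugacy''), so you are supplying the details the paper omits rather than taking a different route. Your identification of the torsion statements in (2) and (3) as instances of Corollaries~\ref{T^{(k)}(4,6)} and~\ref{T^{(k)}(5,7)}, and of (1) as a forward reference to Section~\ref{Section 5}, is also how the paper handles them.
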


\begin{figure}[h!]
	\centering
	\includegraphics[scale=1.4]{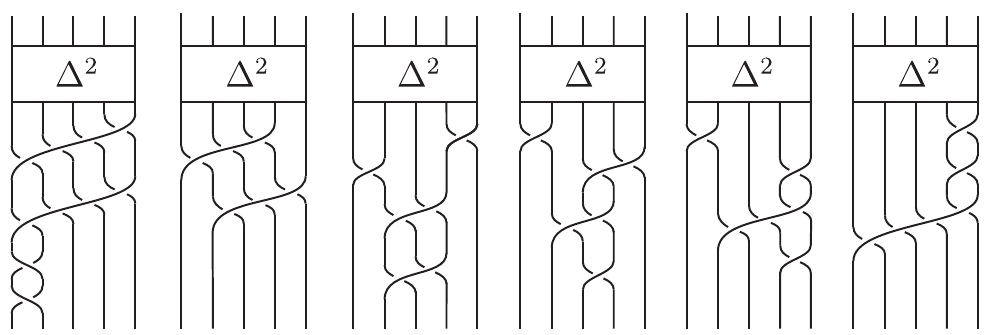}
	\caption{\small{Operations on braids allowing conjugacy leading from $T^{(-2)}(5,7)$ to $T^{(2)}(5,6)$.}}
	\label{Jozef-reduction}
\end{figure}

\begin{theorem}\label{Theorem 3.11}\
\begin{enumerate}
\item[(1)] The Khovanov homology of $T^{(k)}(4,5)$ contains ${\mathbb Z}_4$-torsion for any $k\geq 0$.  We have
\begin{equation*}
tor H^{9,25+k}(T^{(k)}(4,5))=
\newline
\begin{cases}
{\mathbb Z}_4 & \text{if $k=0$, or $1$}\\
{\mathbb Z}_2 \oplus {\mathbb Z}_4& \text{if $k\geq 2$,}\\
0 & \text{otherwise.}
\end{cases}
\end{equation*}
\item[(2)] The Khovanov homology of $T^{(k)}(5,6)$ contains ${\mathbb Z}_5$-torsion for any $k \geq 0$. Specifically,\\ $H^{11,35+k}(T^{(k)}(5,6))$ and $H^{12,39+k}(T^{(k)}(5,6))$
have ${\mathbb Z}_5$-torsion for any $k \geq 0$.
\item[(3)] The Khovanov homology of $T^{(k)}(7,8)$ contains ${\mathbb Z}_7$-torsion for any $k \neq -1$. Specifically, \\ $H^{15,61+k}(T^{(k)}(7,8))$ contains $\mathbb {Z}_7$-torsion for any $k \geq 0$, and $H^{23,73+k}(T^{(k)}(7,8))$ contains $\mathbb {Z}_7$-torsion for any $k \leq -2$.\footnote{Notice that the Khovanov homology of $T^{(-1)}(7,8)$ does not contain $\mathbb{Z}_7$-torsion, see Table \ref{T87m1}.}
\end{enumerate}
\end{theorem}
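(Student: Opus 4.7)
The plan is to run the long exact sequence of Lemma \ref{Corollary 2.2.} at one crossing of the $t_k$-twist, just as in the proof of Theorem \ref{Theorem 4.1}. The new difficulty is that for $D=T^{(k)}(m,m+1)$ the $B$-smoothing $D_B$ is no longer the trivial knot, so the long exact sequence does not collapse into an isomorphism outside a handful of bidegrees. The crucial observation, already recorded in the remark following Lemma \ref{Corollary 2.2.}, is that $D_B$ is independent of $k$ up to framing; hence once the relevant entries of $H^{*,*}(D_B)$ are pinned down by a one-off explicit computation, the long exact sequence transports torsion information from $T^{(k-1)}(m,m+1)$ to $T^{(k)}(m,m+1)$ with the grading shift $(0,1)$ dictated by the positive-crossing form of Lemma \ref{Corollary 2.2.}.

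For part (1) I would take as anchors the tabulated $\mathbb{Z}_4$-torsion in $H^{9,25}(T(4,5))$ at $k=0$, a direct check at $k=1$, and the identification $T^{(2)}(4,5)=T^{(-1)}(4,6)$ from Proposition \ref{prop}(2), which combined with Corollary \ref{T^{(k)}(4,6)} places $\mathbb{Z}_4$-torsion in bidegree $(9,27)$ as required. Starting from these anchors I would propagate in $k$ through the long exact sequence at bidegree $(9,25+k)$: exactness together with the computed $H^{*,*}(D_B)$ should show that the $\mathbb{Z}_4$-summand persists for every $k\geq 0$, and the extra $\mathbb{Z}_2$-summand appearing at $k\geq 2$ would arise from a free $\mathbb{Z}$-contribution of $H^{*,*}(D_B)$ whose connecting map ceases to be surjective once $k\geq 2$. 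The same sequence, run in the opposite direction, should show that no $\mathbb{Z}_4$-class survives for $k<0$.

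Parts (2) and (3) would follow an analogous recipe. For part (2) the anchors are the known $\mathbb{Z}_5$-torsion of $T(5,6)$ at $k=0$ and the identity $T^{(2)}(5,6)=T^{(-2)}(5,7)$ from Proposition \ref{prop}(3) together with Corollary \ref{T^{(k)}(5,7)} at $k=2$; the long exact sequence is then pushed independently in each of the bidegrees $(11,35+k)$ and $(12,39+k)$. For part (3) there is no analog of Proposition \ref{prop}, so one takes $T(7,8)$ itself as the anchor for the $k\geq 0$ direction and a directly computed $T^{(k_0)}(7,8)$ with $k_0\leq -2$ as the anchor for the $k\leq -2$ direction, and propagates the $\mathbb{Z}_7$-class on either side of $k=-1$, where the connecting map is precisely what annihilates the class in the skipped slice.

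The main obstacle is honest control over $H^{*,*}(D_B)$ in each of the affected bidegrees. Unlike in Theorem \ref{Theorem 4.1}, the connecting homomorphisms can both create new torsion summands and introduce non-split extensions of existing ones, so one must argue, for instance, that at $k=2$ the group $tor\, H^{9,25+k}(T^{(k)}(4,5))$ really becomes $\mathbb{Z}_2\oplus\mathbb{Z}_4$ rather than $\mathbb{Z}_8$. Verifying that the relevant extension splits — and that the analogous connecting maps in parts (2) and (3) send generators to units in the $\mathbb{Z}$-summands they hit (so that the $\mathbb{Z}_p$-torsion is neither killed nor absorbed into a larger cyclic group) — is the step where explicit calculation of the Khovanov homology of $D_B$, together with the structure of the $\alpha_*$ and $\gamma_*$ maps at neighboring bidegrees, will do the most work.
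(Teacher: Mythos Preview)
Your approach is essentially the paper's: run the long exact sequence of Lemma~\ref{Corollary 2.2.} at the twisted crossing, use that $D_B$ is $k$-independent up to framing, and propagate torsion from computed anchors. Two points of execution differ and are worth noting.

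First, the paper makes $D_B$ explicit: $T^{(k)}_B(4,5)=T(2,3)_{-1-k}$, $T^{(k)}_B(5,6)=T(3,4)_{-1-k}$, and $T^{(k)}_B(7,8)=T(5,6)_{-1-k}$. With the full (and very small) Khovanov table of $T(2,3)$ or $T(3,4)$ written down, the sequence yields $tor\,H^{i,j}(T^{(k)})=tor\,H^{i,j-1}(T^{(k-1)})$ away from an explicit finite list of critical bidegrees; in part~(1) the only critical value meeting the trajectory $(9,25+k)$ for $k\geq 1$ is $(9,27)$ at $k=2$. Identifying $D_B$ concretely is what turns your ``honest control over $H^{*,*}(D_B)$'' from an obstacle into a one-line lookup.

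Second, at that critical value the paper does not analyze $\alpha_*$ or $\gamma_*$ or resolve any extension problem. It simply reads $H^{9,27}(T^{(2)}(4,5))=\mathbb{Z}_2\oplus\mathbb{Z}_4$ directly from a table (and analogously for the critical values in parts~(2) and~(3)), so your worry about $\mathbb{Z}_8$ versus $\mathbb{Z}_2\oplus\mathbb{Z}_4$ is settled by computation rather than by a structural argument. Your detour through Proposition~\ref{prop} and Corollary~\ref{T^{(k)}(4,6)} is a pleasant way to certify $\mathbb{Z}_4$-torsion at $k=2$, but it does not on its own determine the full torsion subgroup, so the table lookup at that one value is still required.
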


\begin{proof}
We concentrate on the proofs of cases (1) and (2).
Our starting point is the long exact sequence of Lemma \ref{Corollary 2.2.}.
In this case, we have $D=T^{(k)}=T^{(k)}(4,5)$, $D_A=T^{(k-1)}=T^{(k-1)}(4,5)$, $D_B=T(2,3)_{-1-k}$, and $w(D)=15+k$,  $w(D_A)=14+k$, $w(D_B)=2-k$ .

Thus the long exact sequence above now has the form

$$
\begin{array}{ccccccc}
\cdots& \stackrel{\gamma_*} {\longrightarrow} & H^{-8-k+i,-20-3k+j}(T(2,3)) & \stackrel{\alpha_*} {\longrightarrow} & H^{i-1,j}(T^{(k)}) &
\stackrel{\beta_*} {\longrightarrow} & H^{i-1,j-1}(T^{(k-1)}) \\
& \stackrel{\gamma_*} {\longrightarrow} & H^{-7-k+i,-20-3k+j}(T(2,3)) & \stackrel{\alpha_*} {\longrightarrow} & H^{i,j}(T^{(k)}) &
\stackrel{\beta_*} {\longrightarrow} & H^{i,j-1}(T^{(k-1)}) \\
& \stackrel{\gamma_*} {\longrightarrow} & H^{-6-k+i,-20-3k+j}(T(2,3)) & \stackrel{\alpha_*} {\longrightarrow} &\cdots.
\end{array}
$$ 
The homology of the right handed trefoil knot $T(2,3)$ is given by
\begin{equation*}
H^{i',j'}(T(2,3))=
\newline
\begin{cases}
{\mathbb Z} & \text{if $(i',j')\in \{(0,1),(0,3),(2,5),(3,9)\}$,}\\
{\mathbb Z}_2 & \text{if $(i',j')=(3,7)$,}\\
0 & \text{otherwise.}
\end{cases}
\end{equation*}
From this we see that $tor H^{i,j}(T^{(k)}(4,5))= tor H^{i,j-1}(T^{(k-1)}(4,5))$ with possible exceptions when $$H^{-7-k+i,-20-3k+j}(T(2,3))\neq 0 \mbox{ or } H^{-6-k+i,-20-3k+j}(T(2,3))={\mathbb Z}_2,$$ That is when $(i,j)\in \{(7+k,21+3k),(7+k,23+3k),(9+k,25+3k),(10+k,29+3k),(10+k,27+3k),(9+k,27+3k)\}$.
From Tables \ref{T54} and \ref{T542}, we know that $H^{9,25}(T(4,5)) = {\mathbb Z}_4$ and $H^{9,27}(T^{(2)}(4,5))={\mathbb Z}_2\oplus {\mathbb Z}_4$,
and the only critical value concerning us is $(i,j)= (9,27)$ for $k=2$. Thus $tor H^{9,26}(T^{(1)}(4,5))= H^{9,25}(T(4,5)) = {\mathbb Z}_4$, and
$H^{9,25+k}(T^{(2)}(4,5))=H^{9,27}(T^{(2)}(4,5))={\mathbb Z}_2\oplus {\mathbb Z}_4$, for $k\geq 2$, as needed.\\

In a similar manner we prove that $H^{11,35+k}(T^{(k)}(5,6))$ and $H^{12,39+k}(T^{(k)}(5,6))$ have $\mathbb{Z}_5$-torsion for $k\geq 0$. We will show, analyzing homology of $H^{**}(T(3,4))$ that $$tor_{odd}H^{11,35+k}(T^{(k)}(5,6))= tor_{odd}H^{12,39+k}(T^{(k)}(5,6))=\mathbb{Z}_5,$$ where $tor_{odd}(G)$ denotes the odd part of torsion of $G$.\\
Figure \ref{T56toT34} illustrates $T^{(k)}_B(5,6)= T(3,4)_{-1-k}.$ 
\begin{figure}[h!]
\centering
\includegraphics[scale=1.2]{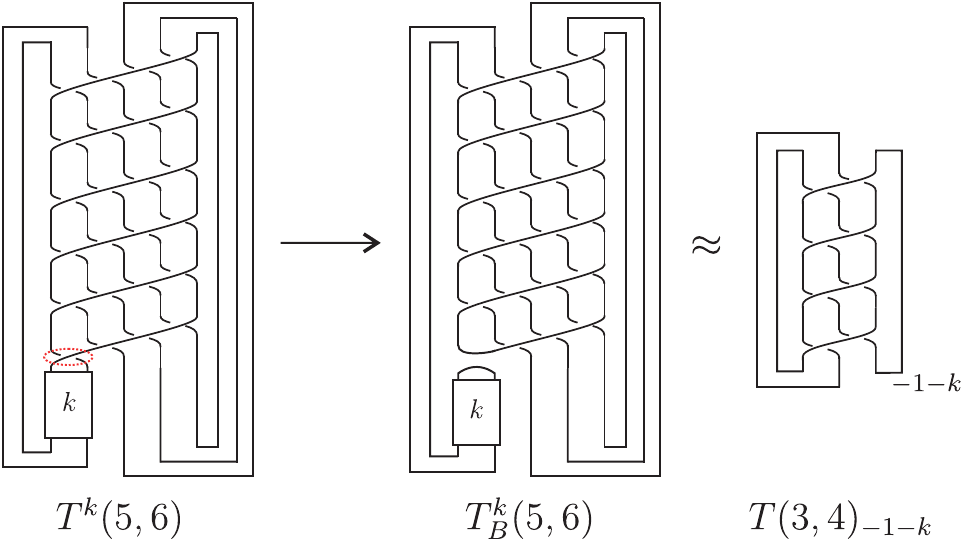} 
\caption{\small{The link $T^{(k)}(5,6)$ becomes $T(3,4)$ with framing $(-1-k)$ after performing a $B$-smoothing at the encircled crossing.}}
\label{T56toT34}
\end{figure}

Futhermore we have:
\begin{equation*}
H^{i',j'}(T(3,4))=
\newline
\begin{cases}
{\mathbb Z} & \text{if $(i',j')\in \{(0,5),(0,7),(2,9),(3,13),(4,11),(4,13),(5,15),(5,17) \}$,}\\
{\mathbb Z}_2 & \text{if $(i',j')=(3,11)$,}\\
0 & \text{otherwise.}
\end{cases}
\end{equation*}
Now we analyze the long exact sequence of homology of Lemma \ref{Corollary 2.2.} with
 $D=T^{(k)}=T^{(k)}(5,6)$, $D_A=T^{(k-1)}=T^{(k-1)}(5,6)$, $D_B=T(3,4)_{-1-k}$, and $w(D)=24+k$ $w(D_A)=23+k$, $w(D_B)=7-k$ .

Thus the interesting part of the long exact sequence now has the form
$$
\begin{array}{ccccccc}
\cdots & \stackrel{\gamma_*} {\longrightarrow} & H^{-9-k+i,-26-3k+j}(T(3,4)) & \stackrel{\alpha_*} {\longrightarrow} & H^{i,j}(T^{(k)}) &
\stackrel{\beta_*} {\longrightarrow} & H^{i,j-1}(T^{(k-1)}) \\
& \stackrel{\gamma_*} {\longrightarrow} & H^{-8-k+i,-26-3k+j}(T(3,4)) & \cdots.
\end{array}
$$
For the odd part of torsion in Khovanov homology the critical values, when possibly $tor_{odd}H^{i,j}(T^{(k)}) \\ \neq tor_{odd}H^{i,j-1}(T^{(k-1)})$ are:
$$(9+k,31+3k),(9+k,33+3k), (11+k,35+3k), (12+k,39+3k), (13+k,37+3k),$$
$$ (13+k,39+3k), (14+k,41+3k), (14+k,43+3k).$$
Notice that for $k=0$, $H^{11,35}(T(5,6)) = H^{12,39}(T(5,6)) = \mathbb{Z}_2 \oplus \mathbb{Z}_5,$ and $H^{14,43}(T(5,6)) = \mathbb{Z}_3$ (see Table \ref{T65}). Upper indices $(11,35),(12,39),$ and $(14,43)$ are critical values. In fact, $H^{**}(T^{(-1)}(5,6))$ has no odd torsion\footnote{We checked that $H^{*,*}(T^{(k)}(5,6))$ has no $\mathbb{Z}_3$-torsion for $-10 \leq k \leq -1$ and $2 \leq k \leq 10$, while $H^{14,43}(T(5,6)) = H^{14,44}(T^{(1)}(5,6)) = \mathbb{Z}_3.$ In fact, $(14,43)$ and $(14,45)$ are critical points for $k=0$ and $k=2$ respectively. See Tables \ref{T65}, \ref{T65m1}, \ref{T651} and \ref{T652}.}. See Table \ref{T65m1}.\\

For $k=2$ we have  critical value $(11,37)$ but from Table \ref{T652} we see that $tor_{odd}H^{11,37}(T^{(2)}(5,6))= {\mathbb Z}_5$ (see Table \ref{T652}). For $k>2$ there
is no critical value potentially changing ${\mathbb Z}_5$-torsion in bidegree $(11,35+k)$. When $k=3$, we have critical value $(12,42)$ but from Table \ref{T653} we see that  $tor_{odd}H^{12,42}(T^{(3)}(5,6)) = {\mathbb Z}_5.$ Thus, Theorem \ref{Theorem 3.11}(2) holds.\ \\
Case (3) follows similarly.  The tables of $T^{(k)}(7,8)$ for $k=-2,0,2$ (see Tables \ref{T87},\ref{T87m2}, and \ref{T872}) and table of $T(5,6)$ (Table \ref{T65}) are needed for the proof.
\end{proof}

\subsection{Small links with $\mathbb{Z}_3,\mathbb{Z}_ 4, \mathbb{Z}_5, \mathbb{Z}_7,$ and $\mathbb{Z}_8$-torsion}\label{ss 3.3}

We conclude the section summarizing the implications of the previous results.

$T(5,6)$ with crossing number $24$ was the smallest known knot with $\mathbb{Z}_3$-torsion. Computations show that the closure of the braid $(\sigma_3\sigma_2\sigma_1\sigma_4\sigma_3)^4$, a link of three components having at most $20$ crossings also has  $\mathbb{Z}_3$-torsion. See Tables \ref{T65} and \ref{32143_to_the power_4}. Moreover, the knot obtained by closing the braid $(\sigma_3\sigma_2\sigma_1\sigma_4\sigma_3)^4\sigma_4^{-1}\sigma_2^{-1}$ has at most $22$ crossings and has $\mathbb{Z}_3$-torsion. See Table \ref{32143_to_the power_4_-3-1}. The closure of the braids $(\sigma_3\sigma_2\sigma_1\sigma_1\sigma_2\sigma_3)^5$ and $(\sigma_3\sigma_2\sigma_1\sigma_1\sigma_2\sigma_3)^5\sigma_3\sigma_2$ with $4$ and $2$ components respectively have $\mathbb{Z}_3$-torsion in Khovanov homology. They are counterexamples to part $2$ of Conjecture \ref{PS} in the next subsection. See Tables \ref{4b4} and \ref{4b2}.

$T(4,5)$ with crossing number $15$ was the smallest prime knot with $\mathbb{Z}_4$-torsion \cite{Shu3}. From Corollary \ref{T^{(k)}(4,6)}, it follows that $T^{(-3)}(4,6)$, which is in fact the 2-cabling of trefoil with $3$-framing has $\mathbb{Z}_{4}$-torsion. Observe that, $T^{(-3)}(4,6)$ also has $15$ crossings.
Moreover, the closure of the braid $(\sigma_3\sigma_2\sigma_1)^7\sigma_1^{-5}\sigma_3^{-2},$ obtained from $T^{(-5)}(4,7)$ by adding two negative half twists on the right-most pair of strands is a two component link. After reduction it is the closure of the braid $\sigma_{2}\sigma_{1}^{2}(\sigma_{3}\sigma_{2})^{2}\sigma_{1}\sigma_{3}\sigma_{2}^{2}\sigma_{1}\sigma_{3}\sigma_{2}$ and has $\mathbb{Z}_4$-torsion. In particular, this link of two components has at most $14$ crossings, whereas the knots $T(4,5)$ and $T^{(-3)}(4,6)$ have $15$ crossings. See Tables \ref{T64} and \ref{T74m5_with-1-1}.

$T(5,6)$ with crossing number $24$ was the smallest known knot with $\mathbb{Z}_5$-torsion in Khovanov homology \cite{BN}. From Corollary \ref{T^{(k)}(5,7)}, it follows that the knot $T^{(-6)}(5,7)$, which is the closure of the braid $(\sigma_{3}\sigma_{2}\sigma_{1}\sigma_{4}\sigma_{3}\sigma_{2})^{3}\sigma_{4}\sigma_{3}\sigma_{2}$ of $22$ crossings has $\mathbb{Z}_{5}$-torsion.  It also follows that the link $T^{(-7)}(5,7)$, which is the closure of the braid $(\sigma_{3}\sigma_{2}\sigma_{1}\sigma_{4}\sigma_{3}\sigma_{2})^{3}\sigma_{4}\sigma_{3}$ of $21$ crossings has $\mathbb{Z}_{5}$-torsion. See Tables \ref{T75} and \ref{T75m7}. 

$T(7,8)$ with crossing number $48$ was the smallest known knot with $\mathbb{Z}_7$-torsion\footnote{Bar-Natan first computed this example \cite{PrSa}.}. From Corollary \ref{T^{(k)}(7,9)}, it follows that the knot $T^{(-8)}(7,9),$ which is the closure of the braid \\ $(\sigma_5\sigma_4\sigma_3\sigma_2\sigma_1\sigma_6\sigma_5\sigma_4\sigma_3\sigma_2)^4\sigma_6\sigma_5\sigma_4\sigma_3\sigma_2\sigma_1$  and the link $T^{(-9)}(7,9),$ which is the closure of the braid $(\sigma_5\sigma_4\sigma_3\sigma_2\sigma_1\sigma_6\sigma_5\sigma_4\sigma_3\sigma_2)^4\sigma_6\sigma_5\sigma_4\sigma_3\sigma_2$ have $\mathbb{Z}_{7}$-torsion. The knot $T^{(-8)}(7,9)$ and the link $T^{(-9)}(7,9)$ after reduction have crossing numbers at most $46$ and $45$ respectively. See Tables \ref{T97} and \ref{T97m9}.

The Khovanov homology of $T^{(-8)}(6,8)$ with braid index $6$ has $\mathbb{Z}_8$-torsion.\footnote{The only knot known previously with $\mathbb{Z}_{8}$-torsion was $T(8,9)$ \cite{Lew}.} After reduction, this link is equivalently the closure of the braid word $(\sigma_4\sigma_3\sigma_2\sigma_1\sigma_5\sigma_4\sigma_3\sigma_2)^4$ with $32$ crossings, and the 2-cabling of $T(3,4)$. Note that by Corollary \ref{T^{(k)}(6,8)} $T^{(-9)}(6,8)$, a knot of at most $33$ crossings, also has $\mathbb{Z}_8$-torsion. This is an example of new torsion being generated as a consequence of adding half twists to a link. Interestingly, $T(3,4)$ is also $8_{19}$ in the Rolfsen's knot table, the smallest non-alternating knot. See Figure \ref{cabling}. It is in fact a counterexample to part $3'$ of Conjecture \ref{PS} of the next subsection with $p=2$, $r=3$, and $n=6$. See Tables \ref{T86} and \ref{T86m8}.

\subsection{$\mathbb{Z}_{2^s}$-torsion in Khovanov homology}
Till now, no knot or link with torsion larger than $\mathbb{Z}_8$ was known. In this subsection we introduce infinite families of links with braid index $4$ containing $\mathbb{Z}_{2^s}$-torsion with $s\leq23.$ These infinite families also provide us with counterexamples to parts (2') and (3') of the following conjecture with $p = 2$. 

\begin{conjecture}[\cite{PrSa}]\label{PS}
	PS braid conjecture \
	\begin{enumerate}
		\item[(1)] Khovanov homology of a closed $3$-braid can have only $\mathbb{Z}_2$ torsion.
		\item[(2)] Khovanov homology of a closed $4$-braid cannot have an odd torsion.
		\item[(2')] Khovanov homology of a closed $4$-braid can have only $\mathbb{Z}_2$ and $\mathbb{Z}_4$ torsion.
		\item[(3)] Khovanov homology of a closed $n$-braid cannot have $p$-torsion for $p > n$ ($p$ prime).
		\item[(3')] Khovanov homology of a closed $n$-braid cannot have $\mathbb{Z}_{p^r}$ torsion for $p^r > n$.
	\end{enumerate}
\end{conjecture}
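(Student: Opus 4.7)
Since the paper announces infinite families of closed $4$-braid links whose Khovanov homology carries $\mathbb{Z}_{2^s}$-torsion for $s\le 23$, and since $\mathbb{Z}_{2^s}$ with $s\ge 3$ simultaneously violates (2') and (3') in the case $p=2,\,n=4$, the natural plan is to disprove (2') and (3') by explicit construction rather than to attack (1), (2), (3). Parts (1)--(3) seem out of reach by the present torsion-hunting techniques: (1) is essentially the Shumakovitch bound for $3$-braids, whereas (2) and (3) are structural statements about the Khovanov complex of a braid and would require genuinely new input beyond long exact sequences and direct computer search.

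The construction I would carry out has two ingredients. First, a \emph{seed} diagram $D_0$: one scans short $4$-braid words in $\sigma_1,\sigma_2,\sigma_3$ using existing integer-coefficient Khovanov homology implementations (Bar-Natan, Shumakovitch, Lewark), looking for the smallest word whose homology already exhibits $\mathbb{Z}_{2^s}$-torsion for increasing $s$. A guided search is possible: Corollary \ref{T^{(k)}(4,6)} shows that twists of $T(4,6)$ support $\mathbb{Z}_4$-torsion, and the $2$-cabling phenomenon used for $T^{(-8)}(6,8)$ in Subsection \ref{ss 3.3} suggests that iterated cabling or iterated twisting is what raises the $2$-adic exponent. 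Second, an \emph{extension} step: given $D_0$ with $\mathbb{Z}_{2^s}$-torsion in some bidegree $(i_0,j_0)$, form the family $D_k$ by performing $t_k$-moves on a suitable crossing. If the $B$-smoothing there yields the trivial knot with appropriate framing, Theorem \ref{Theorem 4.1}(3) gives
\[
\mathrm{tor}\, H^{i_0,\,j_0+k}(D_k) \;=\; \mathrm{tor}\, H^{i_0,j_0}(D_0)
\]
for all $k$ outside finitely many critical bidegrees, immediately yielding an infinite family. When $D_B$ is a smaller nontrivial knot with sparse homology (as with $T(2,3)$ and $T(3,4)$ in the proof of Theorem \ref{Theorem 3.11}), one instead feeds the long exact sequence of Lemma \ref{Corollary 2.2.} directly and handles the critical bidegrees one by one.

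The main obstacle is computational, not conceptual. Producing a seed with large $s$ requires Smith normal form computations on enormous integer matrices, and this is exactly why the paper stops at $s=23$. A secondary obstacle is showing that the resulting links really have braid index $4$, rather than admitting an accidental $3$-braid presentation; this one verifies a posteriori via the Morton--Franks--Williams inequality applied to the HOMFLY-PT polynomial of the closure, which for the candidate words coming from twists of $T(4,m)$ should give the matching lower bound of $4$. With those two obstacles acknowledged, the $t_k$-move plus long exact sequence machinery developed in Sections \ref{2} and \ref{aa} should suffice to upgrade any single seed into the claimed infinite family and thereby refute (2') and (3').
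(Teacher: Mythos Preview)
The statement is a conjecture, so there is no proof to compare against; the relevant question is how the paper constructs its counterexamples to parts (2), (2$'$), and (3$'$). Your two-step plan---find a seed by computation, then propagate by Theorem~\ref{Theorem 4.1} and the long exact sequence---is exactly the mechanism the paper uses, and your identification of the extension step is accurate.

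Two substantive points where your proposal diverges from the paper. First, you describe the seed search as an unstructured scan of short $4$-braid words, whereas the paper's actual contribution is a \emph{specific structured family}: the flat $2$-cablings of $T(2,2s+1)$, realized as $T^{(-4s-2)}(4,4s+2)$. These are the seeds that empirically carry $\mathbb{Z}_{2^s}$-torsion (Conjecture~\ref{bigtorsions}, verified for $s\le 23$), and Corollary~\ref{Corollary 3.14} then propagates the torsion to all $k\le -4s-2$. A blind search would not have found $s=23$; the cabling pattern is what makes the computation feasible and the conjecture plausible for all $s$. Second, you assert that parts (1)--(3) are out of reach, but the paper \emph{does} disprove part~(2): the closures of $(\sigma_3\sigma_2\sigma_1\sigma_1\sigma_2\sigma_3)^5$ and $(\sigma_3\sigma_2\sigma_1\sigma_1\sigma_2\sigma_3)^5\sigma_3\sigma_2$ are closed $4$-braids with $\mathbb{Z}_3$-torsion (Tables~\ref{4b4} and~\ref{4b2}, discussed in Subsection~\ref{ss 3.3}). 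So odd torsion in $4$-braids is exhibited, not merely conjectured.

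Your remark about verifying braid index~$4$ via the Morton--Franks--Williams inequality is a reasonable caution, though the paper simply asserts it; since the links are presented as closures of $4$-string braids and are not obviously $3$-braids, the paper treats this as evident.
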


\begin{figure}
	\centering
	\includegraphics[width = 11cm]{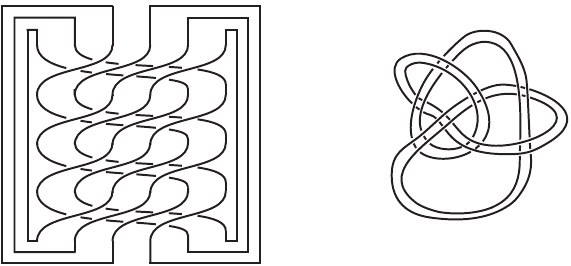}
	\caption{\small{The braid $(\sigma_4\sigma_3\sigma_2\sigma_1\sigma_5\sigma_4\sigma_3\sigma_2)^4$, the flat $2$-cabling of the knot $8_{19}$.}}
	\label{cabling}
\end{figure}

Observe that the flat $2$-cabling of the torus knot $T(2,2s+1)$ is equivalent to $T^{(-4s-2)}(4,4s+2)$.  From Theorem \ref{Theorem 4.1}, we have the following corollary.

\begin{corollary}\label{Corollary 3.14}
Suppose that $H^{6s+1,16s+4}(T^{(-4s-2)}(4,4s+2))$ has $Z_{2^s}$-torsion where $s \geq 1.$ Then for any $k \leq -4s-2,$ the Khovanov homology $H^{6s+1,20s+6+k}(T^{(k)}(4,4s+2))$ also contains $\mathbb{Z}_{2^s}$-torsion.
\end{corollary}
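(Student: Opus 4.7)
The plan is to apply Theorem~\ref{Theorem 4.1}(3) iteratively, starting from the hypothesis at $k = -4s-2$ and descending one unit in $k$ at a time. Since $T^{(k)}(4, 4s+2)$ is of the form $T^{(k)}(m, sm+2)$ with $m = 4$, the remark following Theorem~\ref{Theorem 4.1} guarantees that $T_B^{(k)}(4, 4s+2)$ is a trivial knot diagram, so the theorem is applicable with $D = T^{(k)}(4, 4s+2)$ and $D_A = T^{(k-1)}(4, 4s+2)$.

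I would then induct on $n = -(k + 4s + 2) \geq 0$. The base case $n = 0$ is exactly the hypothesis. For the inductive step, assuming $\mathbb{Z}_{2^s} \subseteq tor\, H^{6s+1, 20s+6+k}(T^{(k)}(4, 4s+2))$, I would apply Theorem~\ref{Theorem 4.1}(3) at bidegree $(i,j) = (6s+1, 20s+6+k)$. Provided this bidegree is not one of the two critical pairs $(u(k), 3u(k))$ or $(u(k), 3u(k) - 2)$, the theorem yields
\[
tor\, H^{6s+1,\, 20s+6+k}(T^{(k)}) \;=\; tor\, H^{6s+1,\, 20s+5+k}(T^{(k-1)}),
\]
and since $20s+5+k = 20s+6+(k-1)$, this advances the induction.

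The main technical point is verifying that the bidegree $(6s+1, 20s+6+k)$ is never critical for $k \leq -4s-2$. Running the same writhe computation that yields $u(k) = \lfloor m(m+2)/2 \rfloor + k$ in Corollary~\ref{Khovanov T(m,m+2)}, generalized from $n = m+2$ to $n = sm+2$ with $m = 4$, shows that $u(k)$ is a linear function of $k$ of slope $1$; in particular $u$ decreases as $k$ decreases, and one checks that $u(-4s-2) \leq 4s+2 < 6s+1$ for every $s \geq 1$. Hence $u(k) < 6s+1$ for all $k \leq -4s-2$, which forces the first coordinate of every critical pair to be strictly below $i = 6s+1$, so the critical pairs are automatically avoided at every step of the induction.

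The hard part is the writhe calculation of $u(k)$ for $T^{(k)}(4, 4s+2)$, since this family lies just outside the $T^{(k)}(m, m+2)$ case handled explicitly in Corollary~\ref{Khovanov T(m,m+2)}; however, the reduction of $T_B^{(k)}(4, 4s+2)$ to a framed trivial knot is structurally parallel to the one in Figure~\ref{Tk46toTrivial}, so the inequality $u(k) < 6s+1$ is straightforward once the reduction is written down. Everything else reduces to packaging the inductive transport above.
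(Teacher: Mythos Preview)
Your proposal is correct and follows essentially the same route as the paper's proof: both apply Theorem~\ref{Theorem 4.1} to $D=T^{(k)}(4,4s+2)$, $D_A=T^{(k-1)}(4,4s+2)$, and verify that the first coordinate $i=6s+1$ sits strictly above $u_k$ for all $k\le -4s-2$, so the exceptional bidegrees are never hit. The paper is just slightly more explicit, computing $w(T^{(k)}(4,4s+2))=3(4s+2)+k$ and $w(T_B^{(k)}(4,4s+2))=-(4s+2)+1-k$ to get $u_k=2(4s+2)+k$ directly, and it invokes part~(1) of Theorem~\ref{Theorem 4.1} (full isomorphism, four exceptional pairs) rather than part~(3); your use of part~(3) is a harmless variant since the same inequality $u_k\le 4s+2<6s+1$ disposes of all exceptional pairs either way.
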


\begin{proof}
Notice that $w(T^{(k)}(4,4s+2))=3(4s+2)+k$ and $w(T^{(k)}_{B}(4,4s+2))=-(4s+2)+1-k,$ i.e. $u_{k}=2(4s+2)+k.$ Thus, the following isomorphism can be obtained from Theorem \ref{Theorem 4.1}:
$$H^{i,j}(T^{(k)}(4,4s+2))=H^{i,j-1}(T^{(k-1)}(4,4s+2))$$
if $(i,j) \not\in \{(u_{k},3u_{k}),(u_{k},3u_{k}-2),(u_{k}-1,3u_{k}),(u_{k}-1,3u_{k}-2)\}.$\\
We have $u_{k} \leq 4s+2 < 6s+1$ because $k \leq -4s-2$ and $s \geq 1,$ which means $u_{k}$ or $u_{k}-1$ never becomes $6s+1$ when $k \leq -4s-2.$ Therefore, since $H^{6s+1,16s+4}(T^{(-4s-2)}(4,4s+2))$ contains $Z_{2^s}$-torsion, so does $H^{6s+1,20s+6+k}(T^{(k)}(4,4s+2))$ for $k \leq -4s-2.$
\end{proof}

\begin{conjecture}\label{bigtorsions}
	Flat $2$-cabling of the torus knot $T(2,2s+1)$ has $\mathbb{Z}_{2^{s'}}$-torsion for $0 < s' \leq s$
	\begin{enumerate}
		\item in bidegree $(i,j),$ for $i = 1+8s-2s'$ and $j = 4 + 20s - 4s',$ where $s$ is a positive integer.
		\item  in bidegree $(i,j),$ for $i = 8s-2s'$ and $j = 20s - 4s',$ where $s$ is a positive integer greater than one.
	\end{enumerate}
	 
\end{conjecture}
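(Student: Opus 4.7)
My plan is to attack Conjecture \ref{bigtorsions} by induction on $s$, using Corollary \ref{Corollary 3.14} as the base case for the ``top'' torsion $\mathbb{Z}_{2^s}$ at bidegree $(6s+1,16s+4)$ and propagating the smaller 2-power torsion classes via iterated long exact sequences. The flat $2$-cabling of $T(2,2s+1)$ is exactly $T^{(-4s-2)}(4,4s+2)$, so the $B$-smoothing framework of Section \ref{3.1} is available: by Theorem \ref{Theorem 4.1}, most torsion in $T^{(k)}(4,4s+2)$ propagates identically as $k$ varies, with exceptions only in four bidegrees per $k$-step. What remains is to exhibit $\mathbb{Z}_{2^{s'}}$-torsion for $1 \leq s' < s$ at the bidegrees specified in cases (1) and (2).

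First I would establish the conjecture for $s = 1, 2, 3$ by direct computation, in order to calibrate the pattern. Second, I would set up the inductive step relating the flat $2$-cabling of $T(2,2s+1)$ to that of $T(2,2s-1)$: the former differs from the latter by the addition of a cabled full twist, which adds four crossings to the $4$-braid word. A careful application of Lemma \ref{Corollary 2.2.} at one of these added crossings produces a long exact sequence that connects $H^{*,*}(T^{(-4s-2)}(4,4s+2))$ with the homology of simpler diagrams which, up to framing, are cablings or mild perturbations of $T(2,2s-1)$, whose torsion structure is available by inductive hypothesis.

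The crucial analytic step is to control the connecting homomorphism $\gamma_*$ in this long exact sequence. The bidegree shifts $(-2,-4)$ between consecutive torsion terms in the conjecture are exactly what two successive applications of the long exact sequence produce, suggesting that at each inductive step $\gamma_*$ should send a free $\mathbb{Z}$-generator onto a multiple of a previously $\mathbb{Z}_{2^{s'}}$-torsion class, thereby creating a new $\mathbb{Z}_{2^{s'+1}}$-torsion class. A natural formalization is in terms of the mod-$2$ Bockstein spectral sequence on Khovanov homology: the conjectured pattern asserts that this spectral sequence supports a nontrivial higher Bockstein differential for each $s' \leq s$.

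The main obstacle, and presumably the reason the statement is posed as a conjecture rather than a theorem, is that $\gamma_*$ cannot be read off from the bidegree structure alone; it requires an explicit description of the cobordism maps induced on Khovanov complexes by the smoothing resolution, and these maps are difficult to track through cablings of torus knots. A plausible route forward is to pass to Bar-Natan's cobordism category and reduce the $2$-cabled tangle to a canonical form, enabling an explicit computation of $\gamma_*$; a complementary route is to combine the conjectural stable structure of Khovanov homology of torus knots (as explored in the references cited in the introduction) with a cabling formula in order to pin down the relevant Bockstein differentials.
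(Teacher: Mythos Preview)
The statement you are attempting to prove is labeled a \emph{conjecture} in the paper, and the paper does not provide a proof. Immediately after stating it, the authors write only that ``the above conjecture is verified up to $s=23$,'' i.e., by direct computation. There is therefore no argument in the paper to compare your proposal against.

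Your proposal is not a proof but a strategy sketch, and you yourself acknowledge the essential gap: the connecting homomorphism $\gamma_*$ in the long exact sequence cannot be determined from bidegree data alone, and without an explicit computation of the induced cobordism maps one cannot conclude that the relevant Bockstein differentials are nontrivial. This is precisely the obstruction that keeps the statement at the level of a conjecture, and nothing in your outline resolves it; the suggestions in your last paragraph (passing to Bar-Natan's cobordism category, or invoking stable Khovanov homology of torus knots together with a cabling formula) are open-ended research directions rather than steps in a proof.

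One further correction: Corollary~\ref{Corollary 3.14} does not give you the base case for the ``top'' torsion $\mathbb{Z}_{2^s}$ at bidegree $(6s+1,16s+4)$. That corollary is a \emph{conditional} result: it \emph{assumes} that $H^{6s+1,16s+4}(T^{(-4s-2)}(4,4s+2))$ has $\mathbb{Z}_{2^s}$-torsion and then propagates this torsion to $T^{(k)}(4,4s+2)$ for $k\leq -4s-2$. Since $(6s+1,16s+4)$ is exactly the bidegree $(1+8s-2s',\,4+20s-4s')$ with $s'=s$, the hypothesis of Corollary~\ref{Corollary 3.14} is the $s'=s$ instance of part~(1) of the conjecture itself. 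So the existence of the top torsion is not already established; it is part of what remains to be proved.
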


The above conjecture is verified up to $s=23.$ Note that the knot $T^{(-2n-1)}(4,2n)$ and the link $T^{(-2n)}(4,2n)$ which are flat $2$-cablings of the torus link $T(2,n)$ have braid index $4$ for $n \geq 2.$ Therefore, they are counterexamples to Conjecture \ref{PS} $(2^{'})$ and the case of $p=2$ in Conjecture \ref{PS} $(3^{'})$. Tables \ref{T64m6}, \ref{T104m10}, and \ref{2-cabling_of_9_1_with_16_torsion} show the Khovanov homology of the links when $s=1,2,$ and $4$ in Conjecture \ref{bigtorsions}.

We end the section with the following problem.

\begin{problem}
Find knots or links whose Khovanov homology has $\mathbb{Z}_{p^s}$-torsion for odd prime $p$ and $s>1.$ In particular, 
\begin{enumerate}
	\item Determine whether or not the Khovanov homology of $T(9,10)$ and $T(9,11)$ has $\mathbb{Z}_9$-torsion.
	\item Find $\mathbb{Z}_9$-torsion for $4$-braid links.
\end{enumerate}
\end{problem}

\section{Torsion in reduced and odd Khovanov homology}\label{Section 5}

Even (reduced and unreduced) and odd (reduced and unreduced) Khovanov homology share the same long exact sequence of homology involving $D_B$, $D$, and $D_A$ \cite{ORS,Ras,Shu1,Shu2}. The reason for this is that all of these homology theories use the same decomposition of chain groups:
$$C_{a,b}(D) = C_{a,b}(D_{B-marker}) \oplus C_{a,b}(D_{A-marker})$$ (see Figure \ref{D-ABmarkers}), and $C_{**}(D_{B-marker})$ is a subchain complex of $C_{**}(D)$.
Boundary maps have the same bidegree and in each of the cases the exact sequence of Lemma \ref{Corollary 2.2.} holds. Thus we can have results
analogous to Corollaries \ref{T^{(k)}(4,6)}-\ref{T^{(k)}(7,9)}, \ref{Corollary 3.14} as long as we have initial data. We illustrate this by two examples,\ \\
(1) on reduced (even) Khovanov homology, where torsion is very rare.  Bidegrees of torsion different from $\mathbb{Z}_2$ in the reduced Khovanov homology of $T(7,9)$ is given in Remark \ref{Shu}.\ \\ 
(2) on odd Khovanov homology, where we use the reduced version since the unreduced version is just ``double" of the reduced one \cite{ORS}. For odd Khovanov homology, torsion is plentiful ($8_{19}=T(3,4)$ has $\mathbb{Z}_2$ and $\mathbb{Z}_3$-torsion), but only small examples have been computed \cite{Shu1}.\ \\
Initial data for the reduced odd Khovanov homology of $10_{124}=T(3,5)$ is given in Tables \ref{oT53}-\ref{oT53m2}. Thus we obtain the following theorem.

\begin{table}[h!]
	\centering
	\includegraphics[scale=0.6]{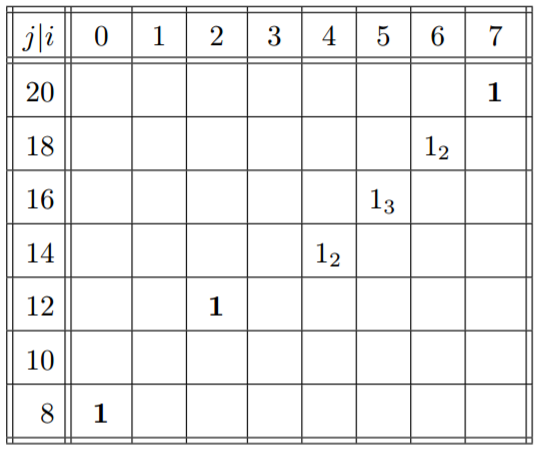}
	\caption{\small{Odd Khovanov homology of the torus knot $T(3,5)$.}}
	\label{oT53}
\end{table}

\begin{table}[h!]
	\centering
	\includegraphics[scale=0.6]{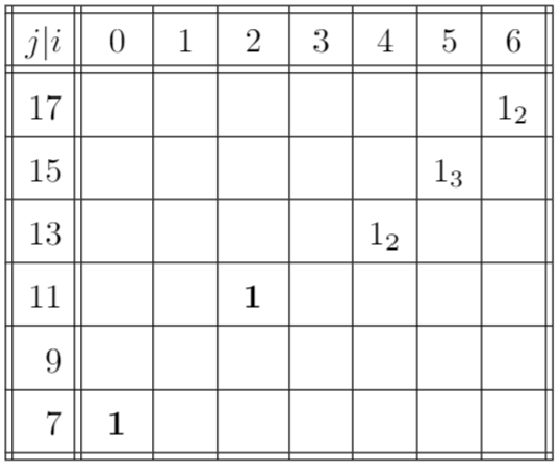}
	\caption{\small{Odd Khovanov homology of the link $T^{(-1)}(3,5)$.}}
	\label{oT53m1}
\end{table}

\begin{table}[h!]
	\centering
	\includegraphics[scale=0.6]{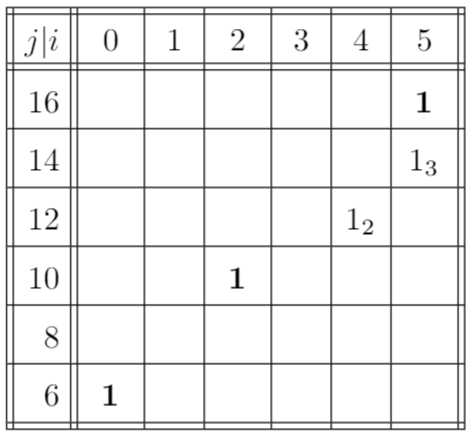}
	\caption{\small{Odd Khovanov homology of the knot $T^{(-2)}(3,5)$ which is also $T(4,5)$ and $8_{19}$.}}
	\label{oT53m2}
\end{table}

\begin{figure}[h!]
	\centering
	\includegraphics[scale=0.6]{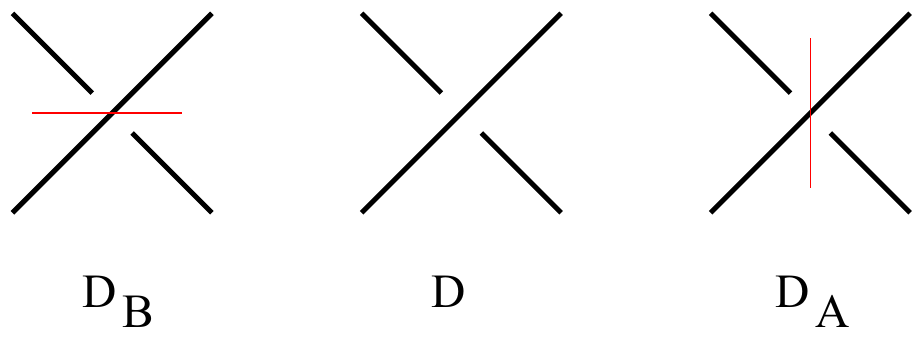} 
	\caption{\small{$D_{B-marker}, D$, and $D_{A-marker}$.}}
	\label{D-ABmarkers}
\end{figure}

\begin{theorem}\
\begin{enumerate}
\item[(1)] For all $k \in \mathbb{Z}$ the reduced (even) Khovanov homology of $T(7,9)$ satisfies
$$tor_{odd}\widetilde{H}^{17,7+k}(T^{(k)}(7,9))= {\mathbb Z}_5.$$ In particular, the knot $T^{(-8)}(7,9)$ of 46 crossings and the two component link $T^{-9}(7,9)$ of $45$ crossings have $\mathbb{Z}_{5}$-torsion.
\item[(2)] For any $k\geq -2$ the reduced odd Khovanov homology of $T(3,5)$ satisfies
$$tor \widetilde{H}_{odd}^{5,16+k}(T^{(k)}(3,5))= {\mathbb Z}_3.$$
As noted before, $T^{(-2)}(3,5)=T(3,4)=8_{19}$, see Proposition \ref{prop}.
\end{enumerate}
\end{theorem}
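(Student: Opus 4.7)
The plan is to adapt the inductive machinery developed in Subsection \ref{3.1} to the reduced (even) Khovanov setting for part (1) and to the reduced odd Khovanov setting for part (2). As emphasized at the beginning of this section, all four flavors of Khovanov homology share the long exact sequence of Lemma \ref{Corollary 2.2.} associated with the $A$/$B$-resolutions of a crossing, so the proof of Theorem \ref{Theorem 4.1} transcribes almost verbatim once one replaces the unreduced Khovanov homology of the trivial knot by its reduced counterpart, which is $\mathbb{Z}$ concentrated in a single bidegree (with an appropriate framing shift). This yields reduced analogs of Corollary \ref{Khovanov T(m,m+2)}: away from a small set of critical bidegrees controlled by $u(m,k)=\lfloor \tfrac{m(m+2)}{2}\rfloor+k$, one has
$$\widetilde{H}^{i,j}\bigl(T^{(k)}(m,m+2)\bigr) \;\cong\; \widetilde{H}^{i,j-1}\bigl(T^{(k-1)}(m,m+2)\bigr),$$
and similarly for $\widetilde{H}_{odd}$. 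The key geometric input, identical to that of Subsection \ref{3.1}, is that $T^{(k)}_B(m,m+2)$ is a framed trivial knot for the values $m=3,7$ under consideration.

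For part (1), the numerical observation is that $u(7,k)=31+k$, while the bidegree of interest $(17,7+k)$ has $i=17$ and $j=7+k$. One checks directly that $(17,7+k)$ can never coincide with one of the critical pairs in the reduced analog of Theorem \ref{Theorem 4.1} for any $k\in\mathbb{Z}$, so the isomorphism above holds at $(17,7+k)$ for every $k$. A two-sided induction then reduces the theorem to the single base case $k=0$, namely the $\mathbb{Z}_5$-torsion in $\widetilde{H}^{17,7}(T(7,9))$ recorded in Remark \ref{Shu}.

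For part (2), the same scheme applies with $m=3$; here $u(3,k)=7+k$ and one verifies analogously that the bidegree $(5,16+k)$ is non-critical for every $k\geq -2$. Tables \ref{oT53}--\ref{oT53m2} furnish the base data at $k\in\{0,-1,-2\}$, each exhibiting the required $\mathbb{Z}_3$-torsion in reduced odd Khovanov homology. A one-sided induction upward in $k$ starting from $k=-2$ then delivers the result for all $k\geq -2$.

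The main obstacle is the bookkeeping for the reduced/odd analog of Theorem \ref{Theorem 4.1}: one must verify that the precise form of the critical bidegrees, the writhe and framing shifts, and the splitting argument in its proof all transfer correctly to the reduced and reduced odd settings, so that the torsion summands $\mathbb{Z}_5$ in part (1) and $\mathbb{Z}_3$ in part (2) are genuinely preserved at each step of the induction (and are not, for instance, absorbed into a free summand via a nontrivial connecting map). Once this is established, the remaining content of the proof is the numerical non-criticality check outlined above combined with a look-up of the initial data in the cited tables and remark.
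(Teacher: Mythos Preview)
Your proposal is correct and follows precisely the route the paper intends: the text immediately preceding the theorem states that the long exact sequence of Lemma \ref{Corollary 2.2.} holds verbatim for reduced even and reduced odd Khovanov homology, so that ``results analogous to Corollaries \ref{T^{(k)}(4,6)}--\ref{T^{(k)}(7,9)}, \ref{Corollary 3.14}'' follow once the initial data (Remark \ref{Shu} and Tables \ref{oT53}--\ref{oT53m2}) are supplied. Your explicit verification that the relevant bidegrees avoid the critical set---using that the reduced (odd or even) homology of the unknot is $\mathbb{Z}$ in a single bidegree, hence only two critical pairs $(u,3u-1)$ and $(u-1,3u-1)$ rather than four---is exactly the bookkeeping the paper leaves implicit.

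One small imprecision: in part (2) the pair $(5,16+k)$ \emph{is} critical at $k=-2$ (there $u(3,-2)=5$ and $3u-1=14=16+(-2)$), so your phrase ``non-critical for every $k\geq -2$'' should read $k\geq -1$. This does not affect your argument, since $k=-2$ is your base case and the upward inductive steps only require non-criticality for $k\geq -1$, which does hold; it also explains why the theorem stops at $k=-2$ rather than extending further down.
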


\begin{remark}\label{Shu}
	Shumakovitch provided us with data on the reduced (even) Khovanov homology of $T(7,9)$ \footnote{Shumakovitch informed us that $T(4,6)$ and $T(5,7)$ have only ${\mathbb Z}_2$-torsion and $T(6,8)$, only ${\mathbb Z}_2$ and ${\mathbb Z}_4$-torsion in reduced (even) Khovanov homology.}. We list them here in bidegrees which contain torsion different from $\mathbb{Z}_{2}$:
	$$\widetilde{H}^{17,7}(T(7,9))=\mathbb{Z}_2\oplus \mathbb{Z}_5,\ \widetilde{H}^{20,76}(T(7,9))=\mathbb{Z}_2^2\oplus \mathbb{Z}_5, \ \widetilde{H}^{22,78}(T(7,9))=\mathbb{Z}\oplus \mathbb{Z}_2^2\oplus \mathbb{Z}_5,$$
	$$\widetilde{H}^{25,84}(T(7,9))=\mathbb{Z}\oplus \mathbb{Z}_2\oplus \mathbb{Z}_5,$$
	$$\widetilde{H}^{26,84}(T(7,9))=\mathbb{Z}^2\oplus \mathbb{Z}_3, \ \widetilde{H}^{29,90}(T(7,9))=\mathbb{Z}\oplus \mathbb{Z}_3.$$
\end{remark}

\section*{Acknowledgements}
We would like to thank Lukas Lewark and Alexander Shumakovitch for providing initial computational data. The computational data for even Khovanov homology provided in this paper was obtained by using {\bf JavaKh}-v2 written by Scott Morrison, which is an update of Jeremy Green's {\bf JavaKh}-v1 written under the supervision of Dror Bar-Natan. Odd and reduced Khovanov homology was computed by using {\bf KhoHo}, written by Alexander Shumakovitch \cite{Shu4}. \\

J\'{o}zef H. Przytycki was partially supported by the Simons Foundation Collaboration Grant for Mathematicians-316446
and CCAS Dean's Research Chair award.
 Marithania Silvero was partially supported by MTM2016-76453-C2-1-P and FEDER.

{\tiny
DEPARTMENT OF MATHEMATICS, THE GEORGE WASHINGTON UNIVERSITY, WASHINGTON DC, USA.}\\
{\scriptsize {\it E-mail address}: sujoymukherjee@gwu.edu}

{\tiny
DEPARTMENT OF MATHEMATICS, THE GEORGE WASHINGTON UNIVERSITY, WASHINGTON DC, USA, AND UNIVERSITY OF GDA\'{N}SK, POLAND.}\\
{\scriptsize{\it E-mail address}: przytyck@gwu.edu}

{\tiny
DEPARTAMENTO DE \'{A}LGEBRA, UNIVERSIDAD DE SEVILLA, SPAIN, AND INSTITUTE OF MATHEMATICS OF THE POLISH ACADEMY OF SCIENCES, WARSAW, POLAND.} \\
{\scriptsize{\it E-mail address}: marithania@us.es}

{\tiny
DEPARTMENT OF MATHEMATICS, THE GEORGE WASHINGTON UNIVERSITY, WASHINGTON DC, USA.}\\
{\scriptsize {\it E-mail address}: wangxiao@gwu.edu}

{\tiny
DEPARTMENT OF MATHEMATICS, THE GEORGE WASHINGTON UNIVERSITY, WASHINGTON DC, USA.}\\
{\scriptsize {\it E-mail address}: syyang@gwu.edu}

\pagebreak

\section*{Appendix}\label{APP}

\begin{table}[h!]
	\centering
	\includegraphics[scale=0.6]{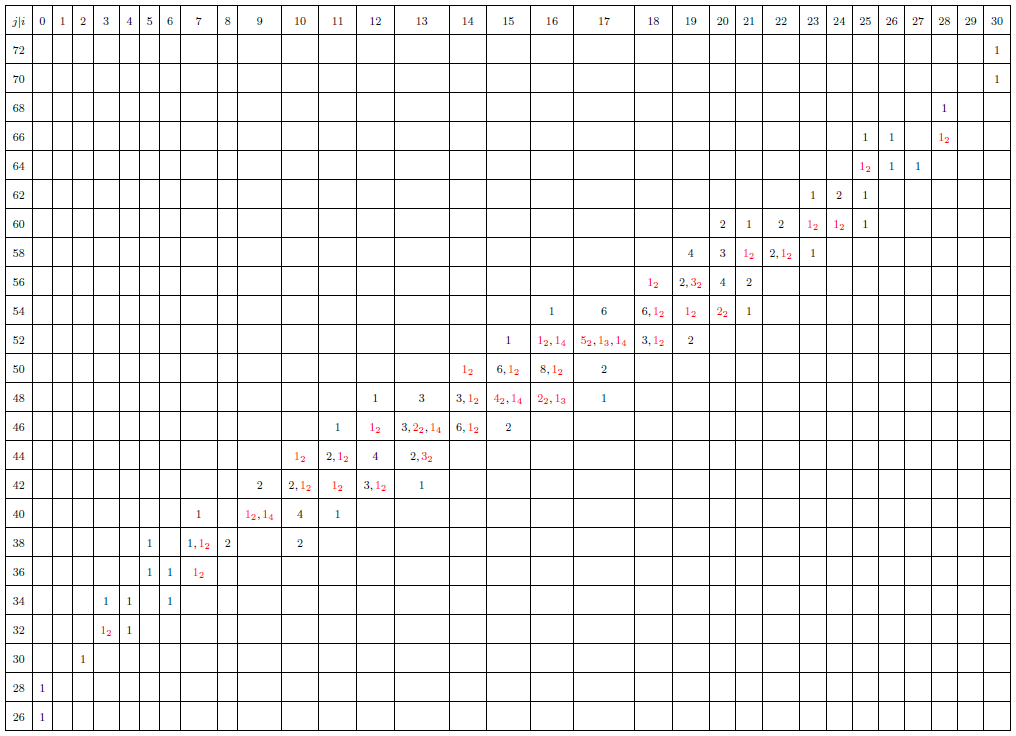}
	\caption{\small{Khovanov homology of the closure of the braid $(\sigma_3\sigma_2\sigma_1\sigma_1\sigma_2\sigma_3)^5.$}}
	\label{4b4}
\end{table}

\begin{table}[h!]
	\centering
	\includegraphics[scale=0.6]{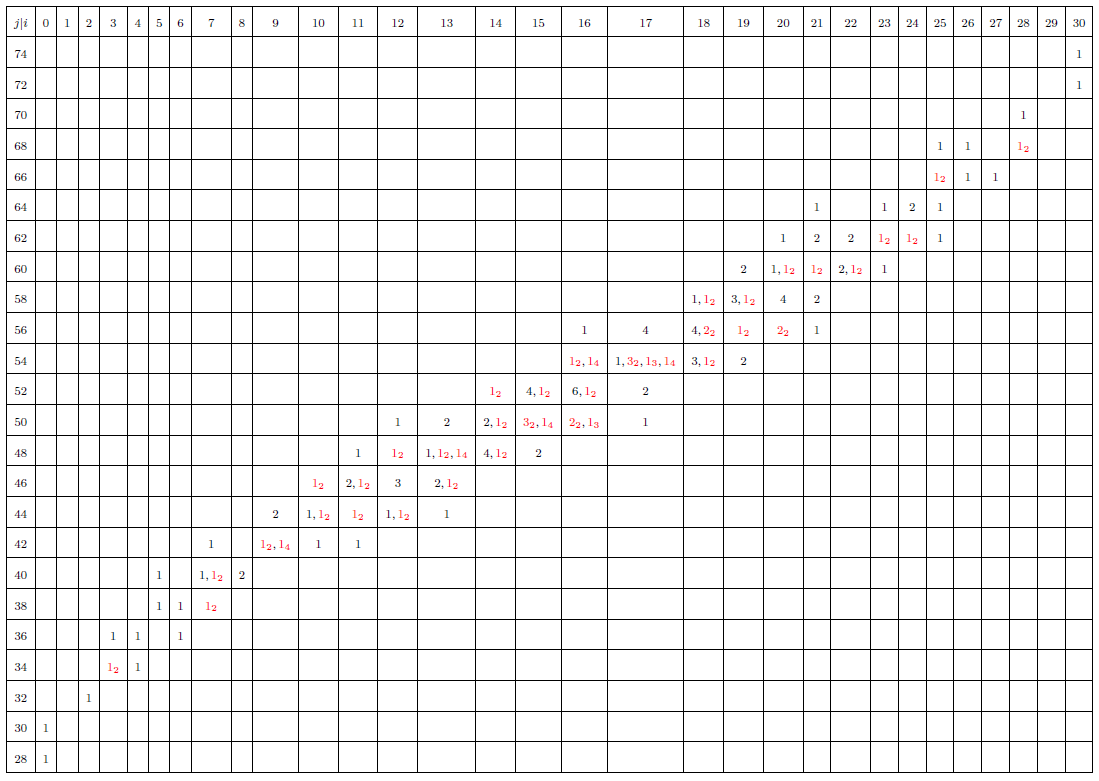}
	\caption{\small{Khovanov homology of the closure of the braid $(\sigma_3\sigma_2\sigma_1\sigma_1\sigma_2\sigma_3)^5\sigma_2\sigma_1.$}}
	\label{4b2}
\end{table}

\pagebreak

\begin{table}[h!]
	\centering
	\includegraphics[scale=0.6]{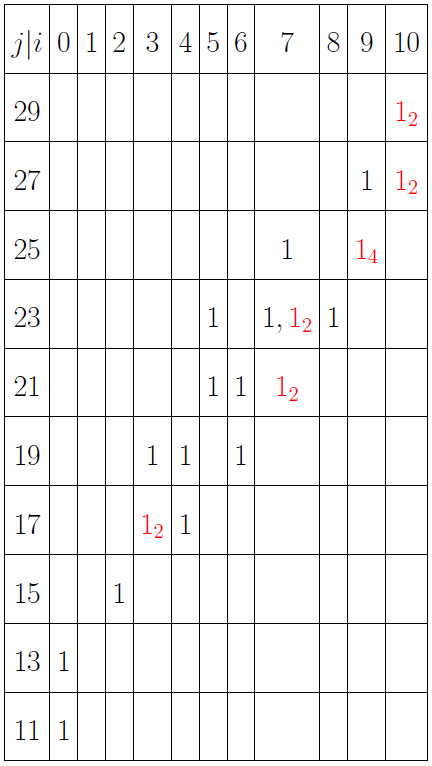}
	\caption{\small{Khovanov homology of the torus knot $T(4,5)$.}}
	\label{T54}
\end{table}

\begin{table}[h!]
	\centering
	\includegraphics[scale=0.6]{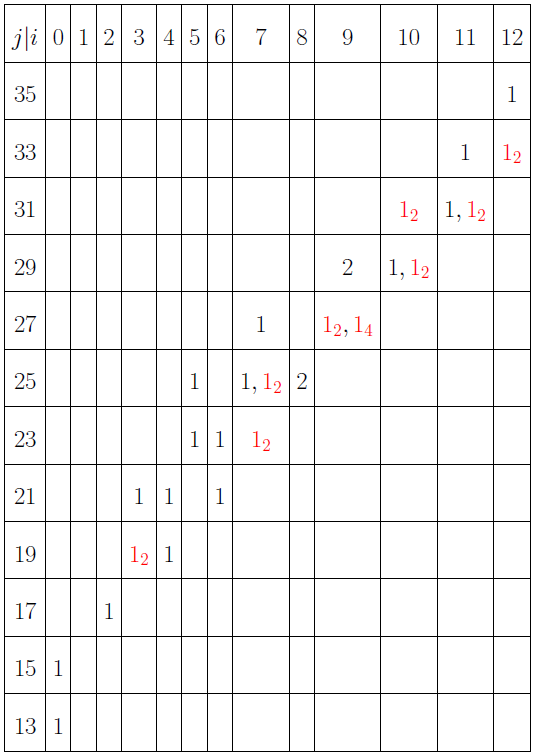}
	\caption{\small{Khovanov homology of the knot $T^{(2)}(4,5)$.}}
	\label{T542}
\end{table}

\pagebreak

\begin{table}[h!]
\centering
\includegraphics[scale=0.6]{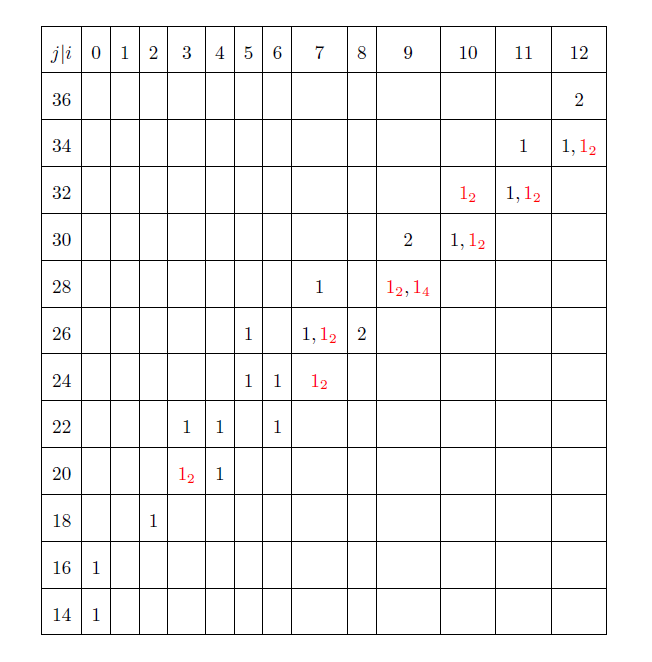}
\caption{\small{Khovanov homology of the torus link $T(4,6)$.}}
\label{T64}
\end{table}

\begin{table}[h!]
	\centering
	\includegraphics[scale=0.6]{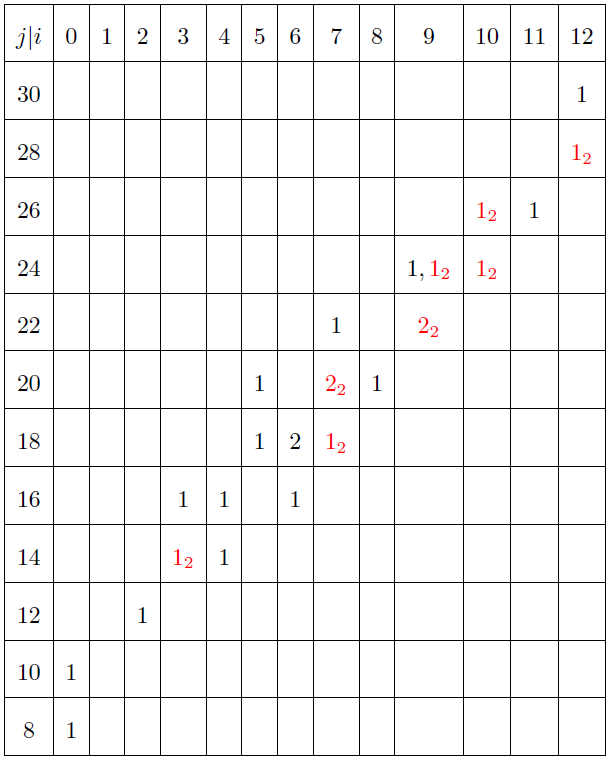}
	\caption{\small{Khovanov homology of the link $T^{(-6)}(4,6)$.}}
	\label{T64m6}
\end{table}

\pagebreak

\begin{table}[h!]
\centering
\includegraphics[scale=0.6]{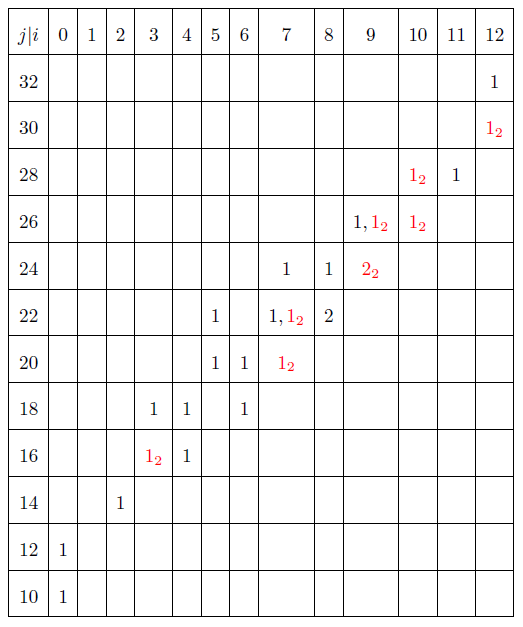}
\caption{\small{Khovanov homology of the link $T^{(-4)}(4,6)$.}}
\label{T64m4}
\end{table}

\begin{table}[h!]
\centering
\includegraphics[scale=0.6]{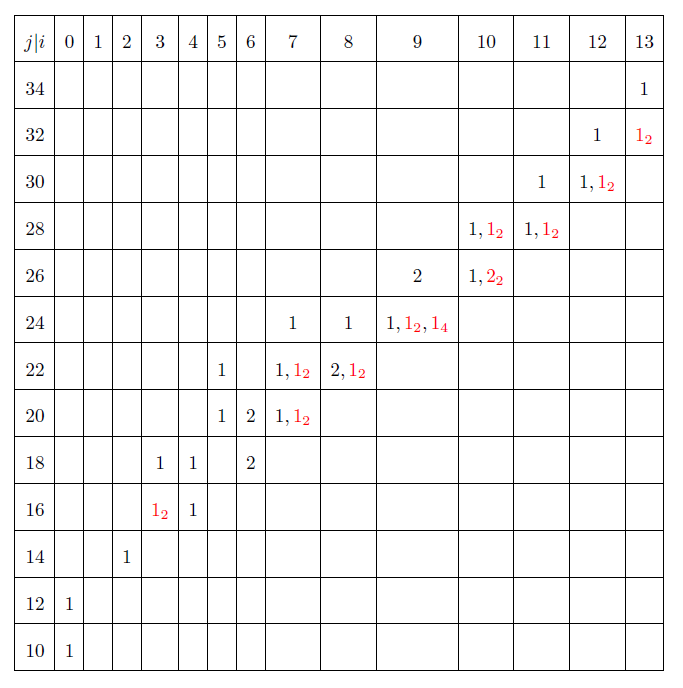}
\caption{\small{Khovanov homology of the closure of the braid $(\sigma_3\sigma_2\sigma_1)^7\sigma_1^{-5}\sigma_3^{-2}.$ After reduction, it has a diagram of $14$ crossings.}}
\label{T74m5_with-1-1}
\end{table}

\pagebreak

\begin{table}[h!]
	\centering
	\includegraphics[scale=0.6]{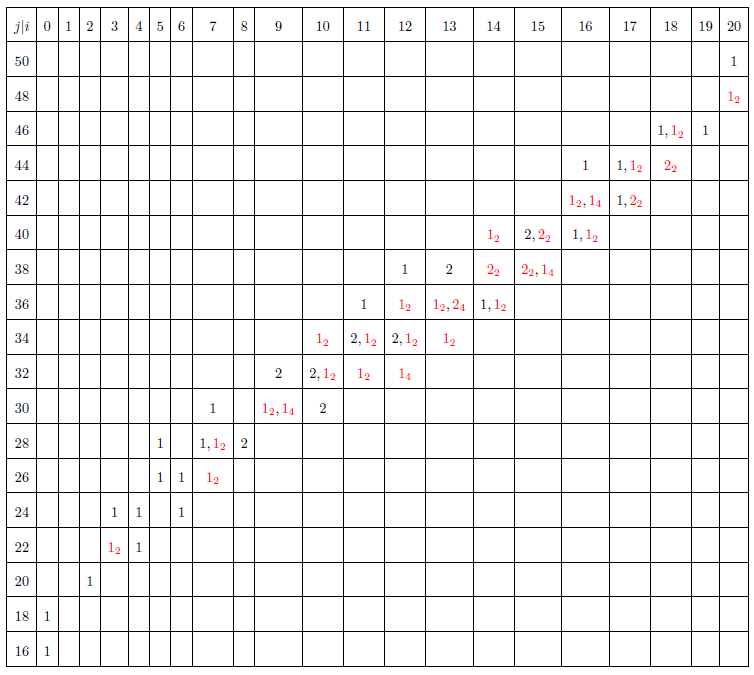}
	\caption{\small{Khovanov homology of the link $T^{(-10)}(4,10)$.}}
	\label{T104m10}
\end{table}

\begin{table}[h!]
	\centering
	\includegraphics[scale=0.6]{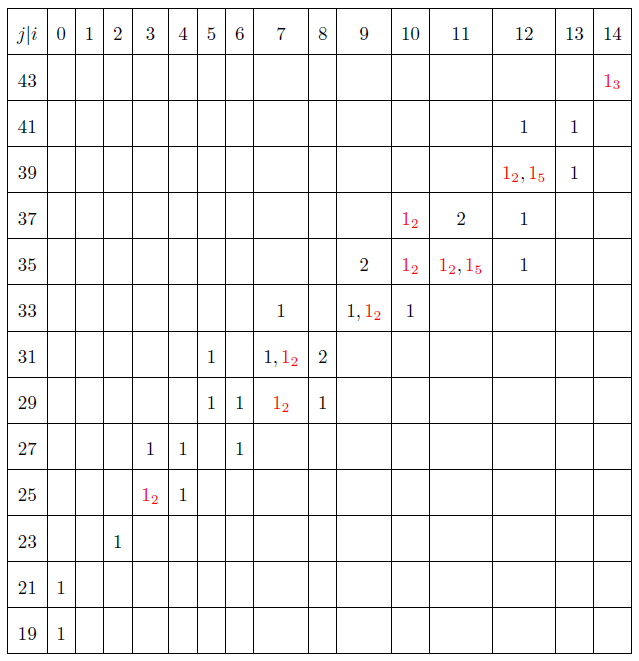}
	\caption{\small{Khovanov homology of the torus knot $T(5,6)$.}}
	\label{T65}
\end{table}

\pagebreak

\begin{table}[h!]
	\centering
	\includegraphics[scale=0.6]{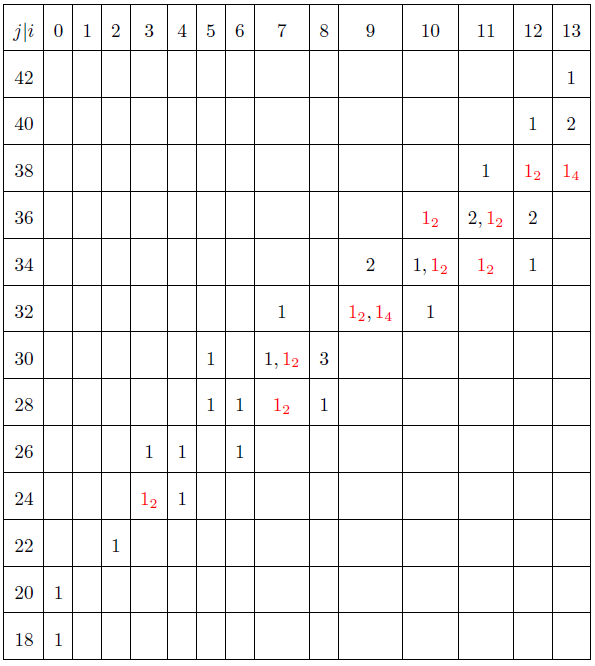}
	\caption{\small{Khovanov homology of the link $T^{(-1)}(5,6)$.}}
	\label{T65m1}
\end{table}

\begin{table}[h!]
	\centering
	\includegraphics[scale=0.6]{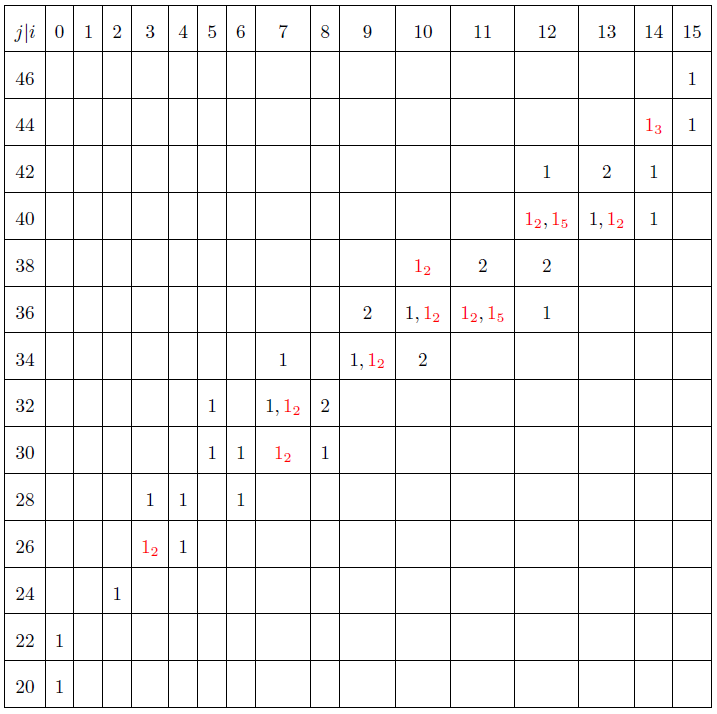}
	\caption{\small{Khovanov homology of the link $T^{(1)}(5,6)$.}}
	\label{T651}
\end{table}

\pagebreak

\begin{table}[h!]
	\centering
	\includegraphics[scale=0.6]{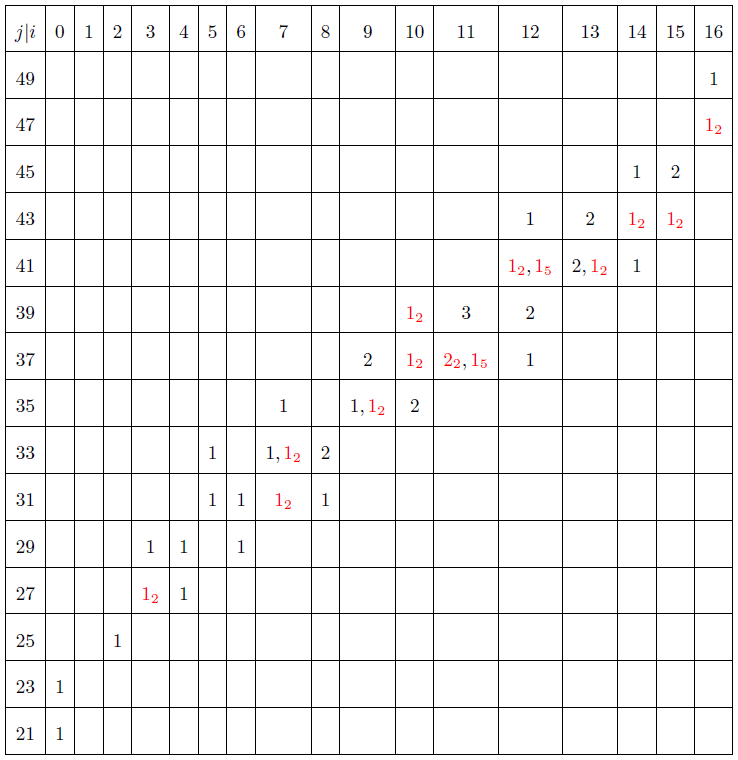}
	\caption{\small{Khovanov homology of the knot $T^{(2)}(5,6)$.}}
	\label{T652}
\end{table}

\begin{table}[h!]
	\centering
	\includegraphics[scale=0.6]{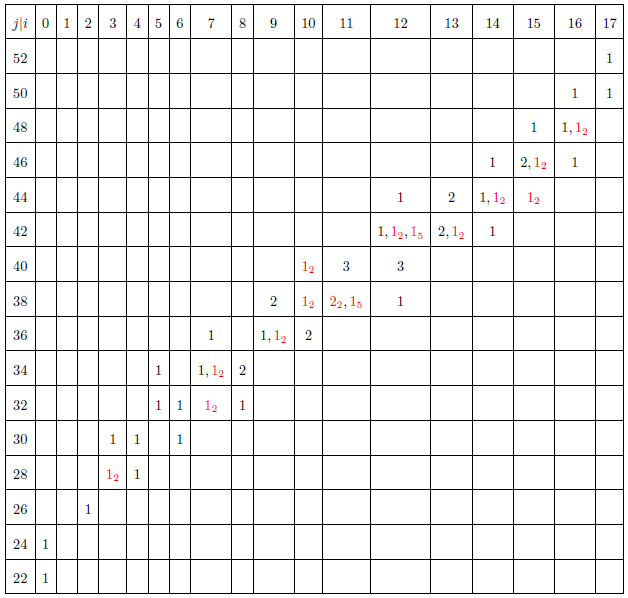}
	\caption{\small{Khovanov homology of the link $T^{(3)}(5,6)$.}}
	\label{T653}
\end{table}

\pagebreak

\begin{table}
	\centering
	\includegraphics[scale=0.6]{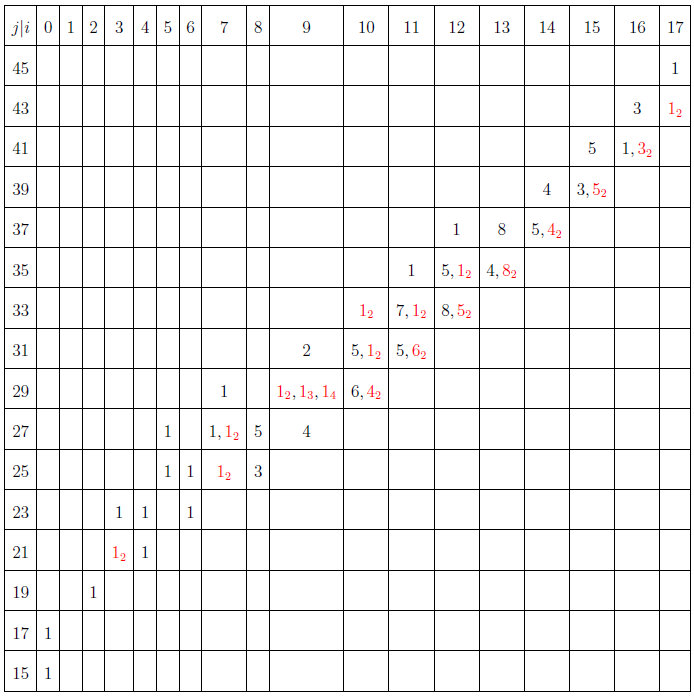}
	\caption{\small{Khovanov homology of the closure of the braid $(\sigma_3\sigma_2\sigma_1\sigma_4\sigma_3)^4$.}}
	\label{32143_to_the power_4}
\end{table}

\begin{table}
	\centering
	\includegraphics[scale=0.6]{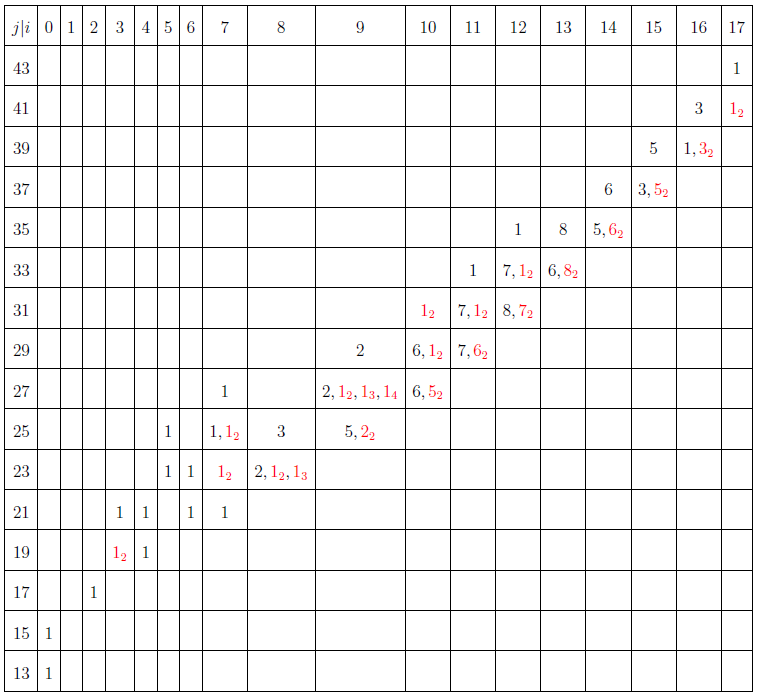}
	\caption{\small{Khovanov homology of the closure of the braid $(\sigma_3\sigma_2\sigma_1\sigma_4\sigma_3)^4\sigma_2^{-1}\sigma_4^{-1}.$}}
	\label{32143_to_the power_4_-3-1}
\end{table}

\begin{table}
\centering
\includegraphics[scale=0.6]{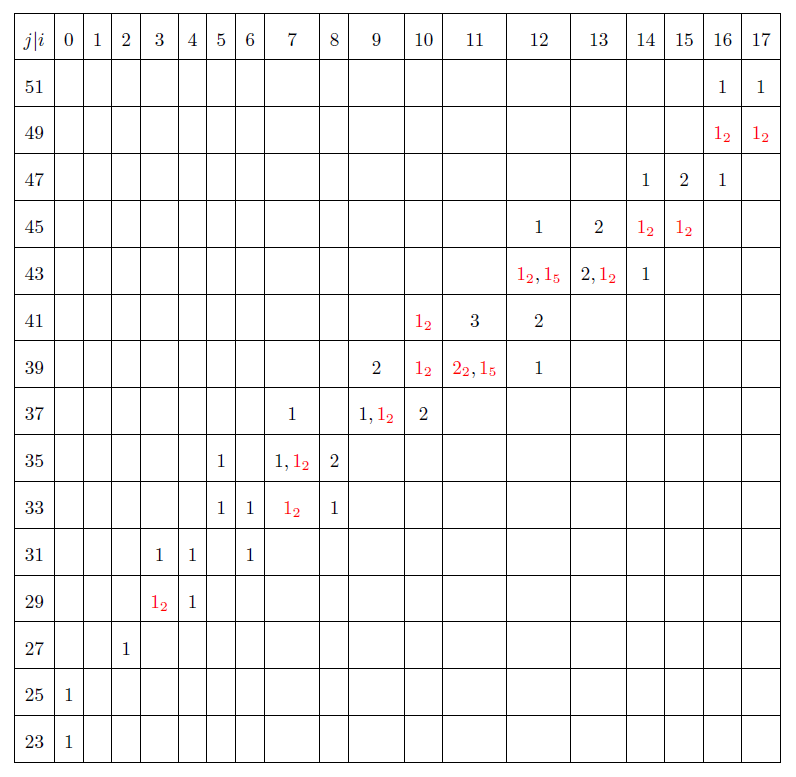}
\caption{\small{Khovanov homology of the torus knot $T(5,7)$.}}
\label{T75}
\end{table}

\begin{table}
\centering
\includegraphics[scale=0.6]{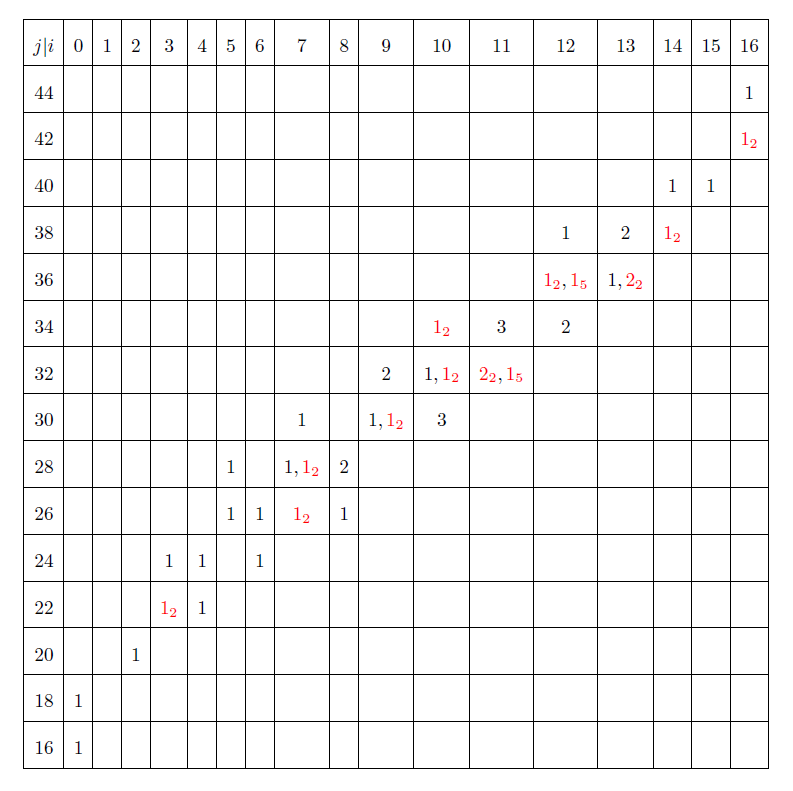}
\caption{\small{Khovanov homology of the link $T^{(-7)}(5,7)$.}}
\label{T75m7}
\end{table}

\begin{table}
\centering
\includegraphics[scale=0.6]{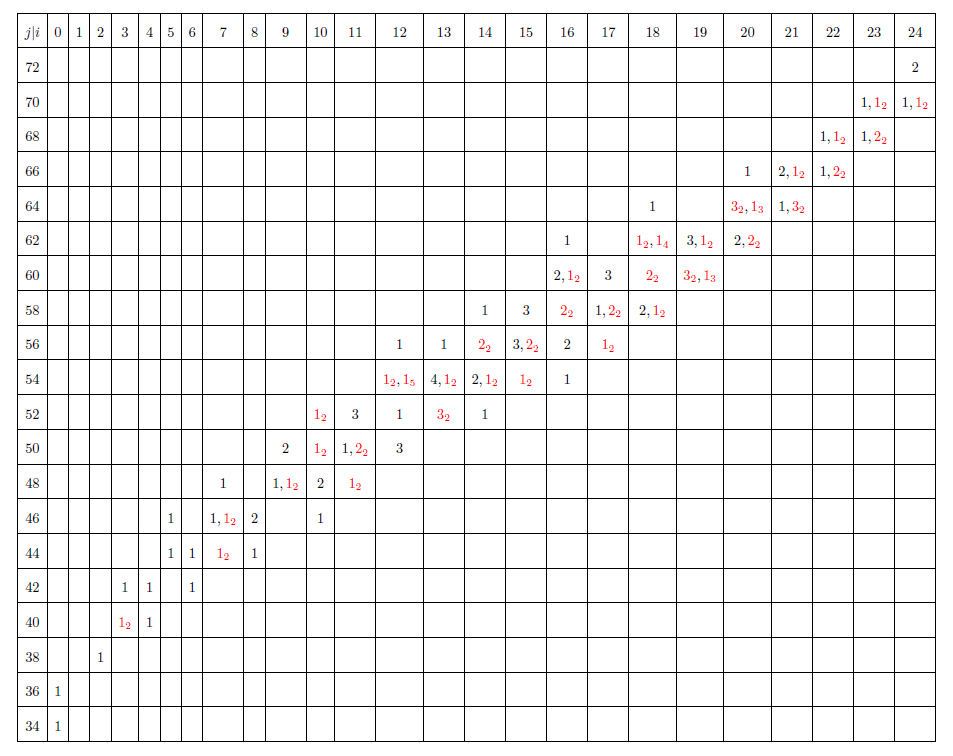}
\caption{\small{Khovanov homology of the torus link $T(6,8)$.}}
\label{T86}
\end{table}

\begin{table}
\centering
\includegraphics[scale=0.6]{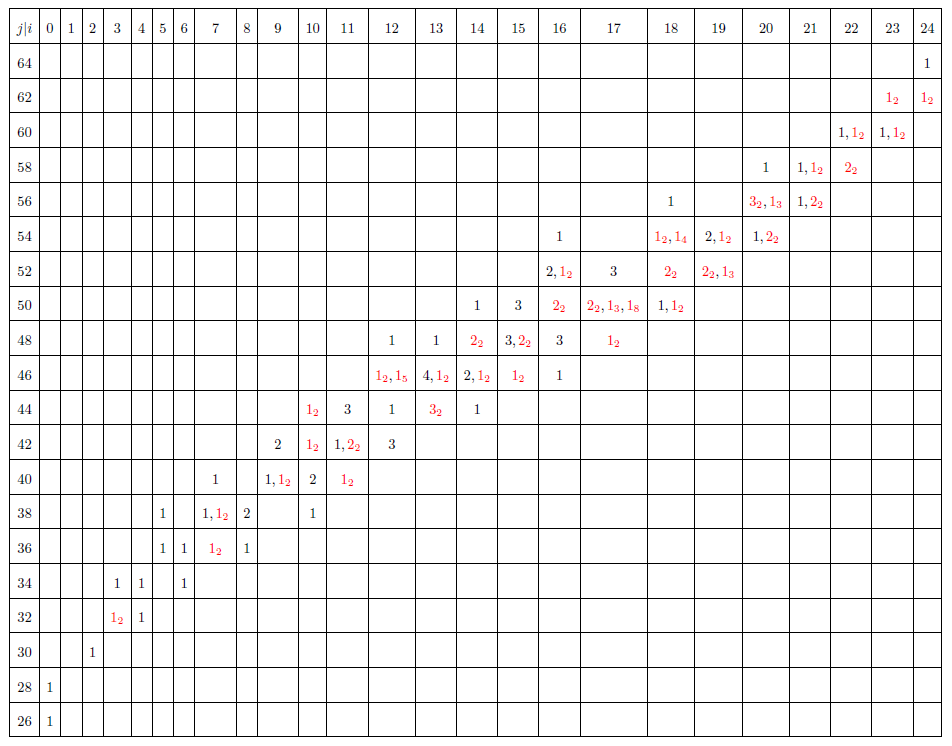}
\caption{\small{Khovanov homology of the link $T^{(-8)}(6,8)$.}}
\label{T86m8}
\end{table}

\begin{table}
	\centering
	\includegraphics[scale=0.6]{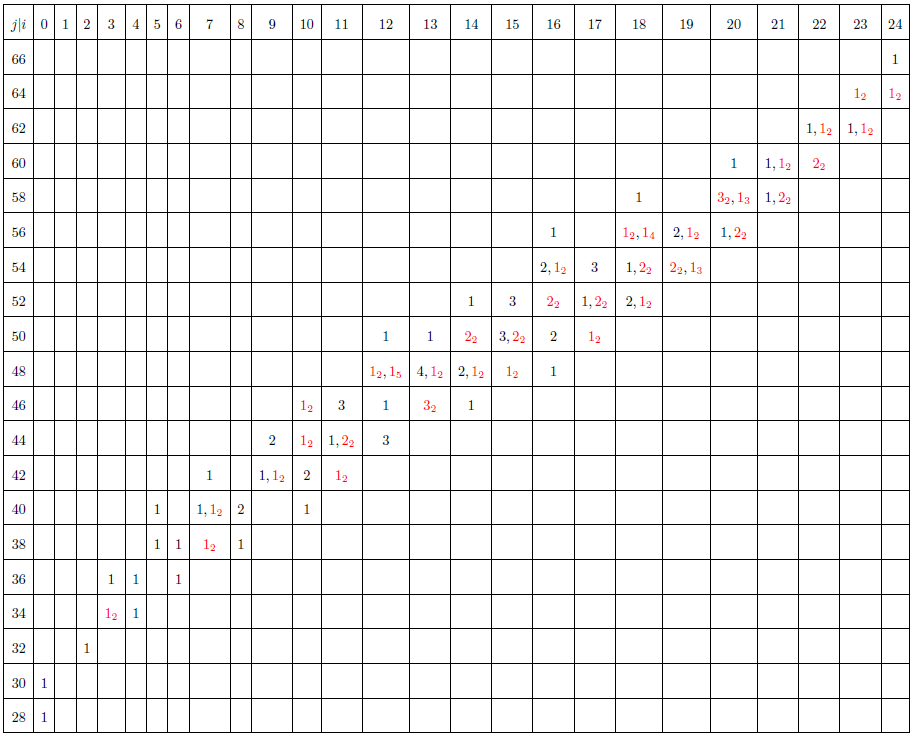}
	\caption{\small{Khovanov homology of the link $T^{(-6)}(6,8)$.}}
	\label{T86m6}
\end{table}

\begin{table}
	\centering
	\includegraphics[scale=0.6]{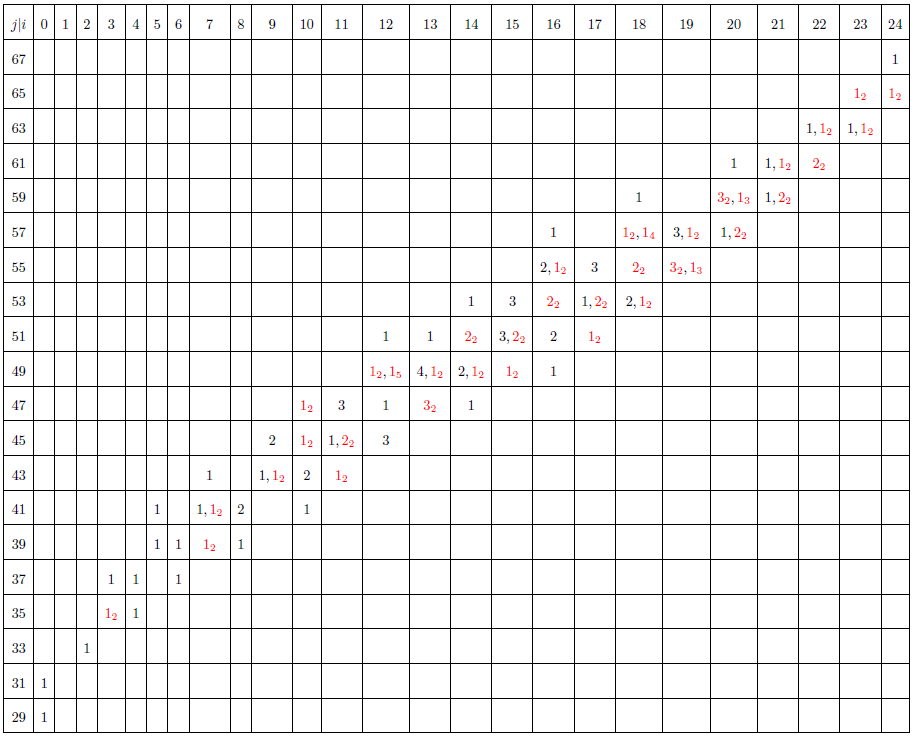}
	\caption{\small{Khovanov homology of the knot $T^{(-5)}(6,8)$.}}
	\label{T86m5}
\end{table}

\begin{table}
\centering
\includegraphics[scale=0.6]{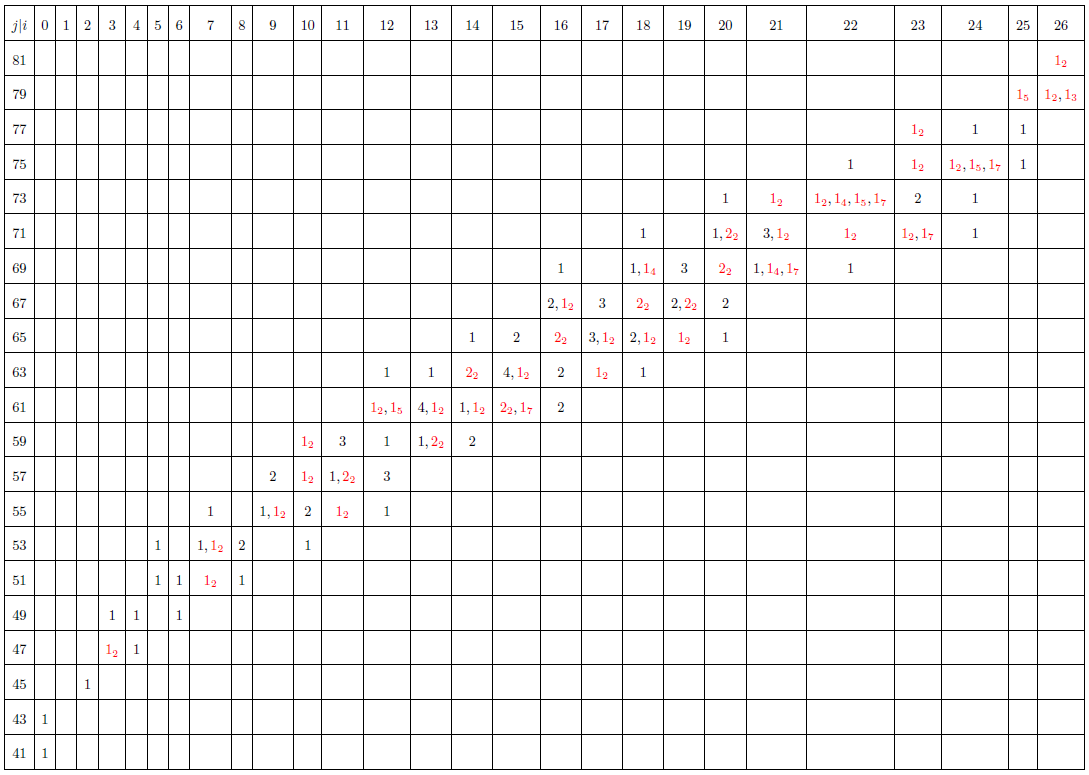} 
\caption{\small{Khovanov homology of the torus knot $T(7,8)$.}}
\label{T87}
\end{table}

\begin{table}
	\centering
	\includegraphics[scale=0.6]{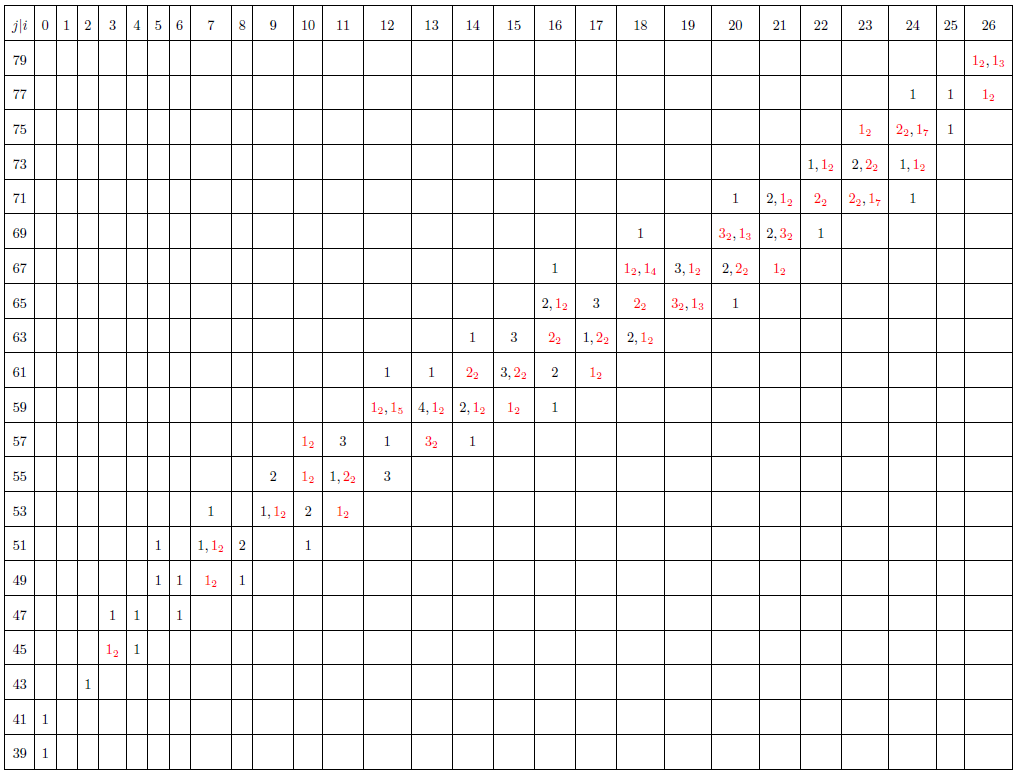} 
	\caption{\small{Khovanov homology of the knot $T^{(-2)}(7,8)$.}}
	\label{T87m2}
\end{table}

\begin{table}
	\centering
	\includegraphics[scale=0.6]{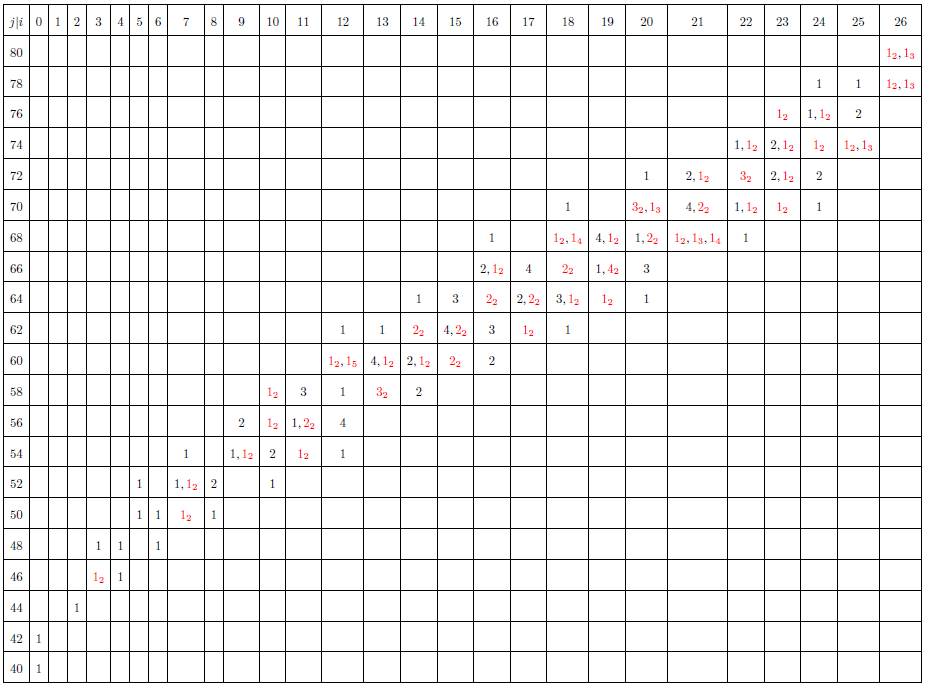} 
	\caption{\small{Khovanov homology of the link $T^{(-1)}(7,8)$.}}
	\label{T87m1}
\end{table}

\begin{table}
	\centering
	\includegraphics[scale=0.6]{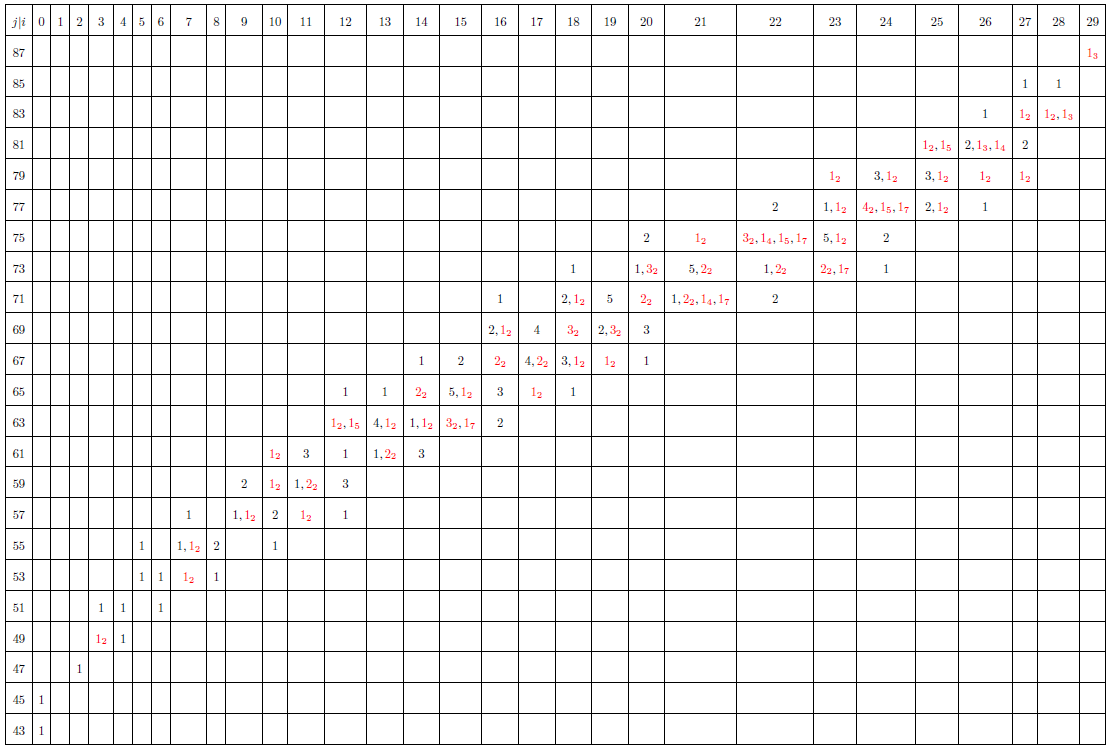} 
	\caption{\small{Khovanov homology of the knot $T^{(2)}(7,8)$.}}
	\label{T872}
\end{table}

\begin{table}
\centering
\includegraphics[scale=0.6]{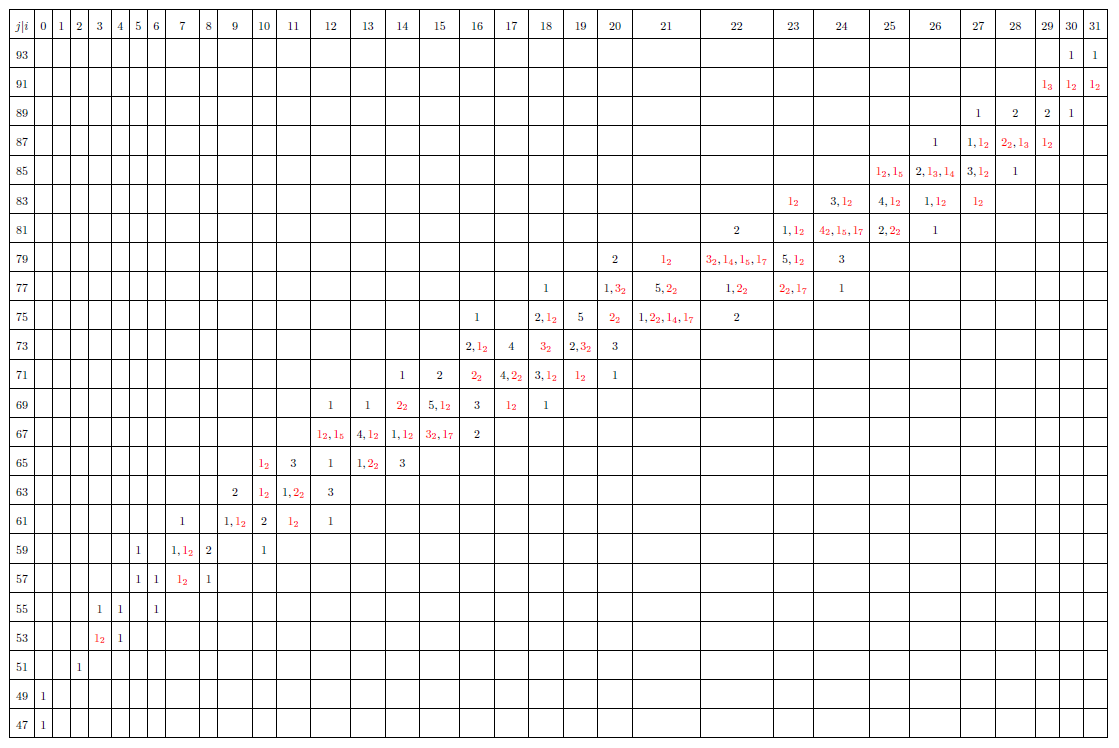}
\caption{\small{Khovanov homology of the torus knot $T(7,9)$.}}
\label{T97}
\end{table}

\begin{table}
\centering
\includegraphics[scale=0.6]{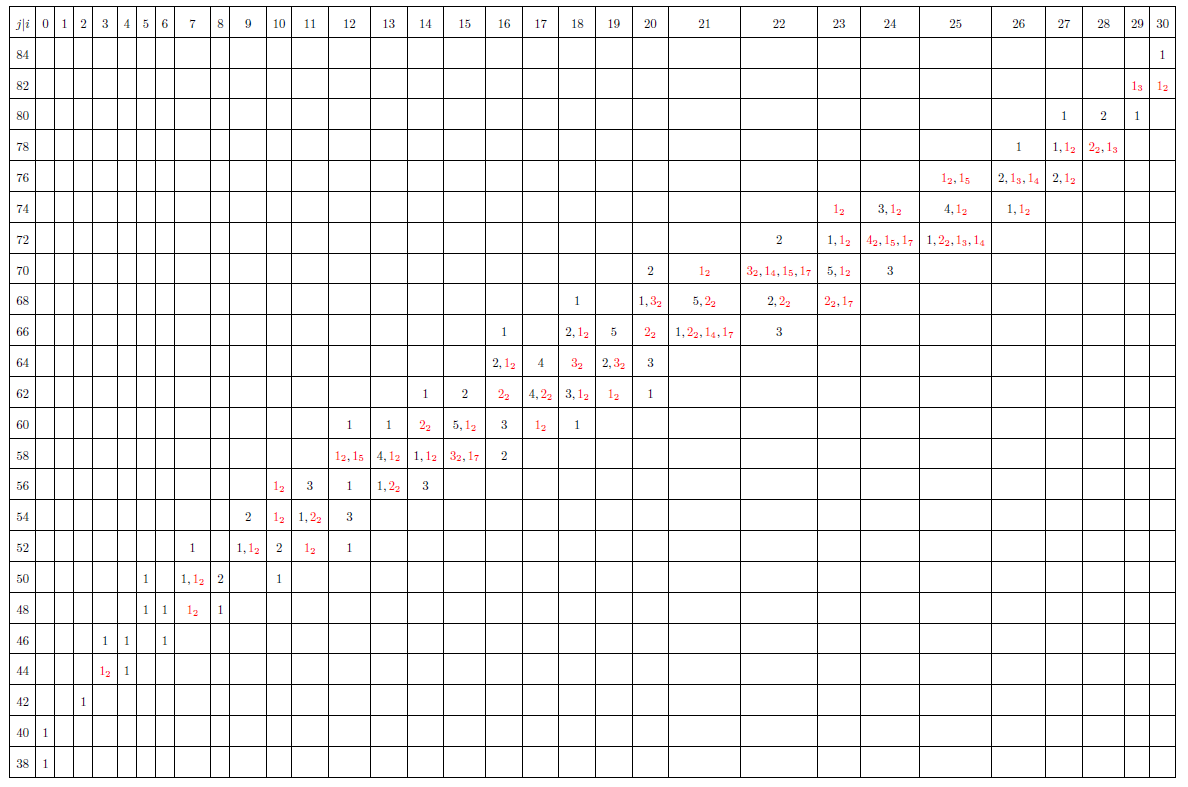}
\caption{\small{Khovanov homology of the link $T^{(-9)}(7,9)$.}}
\label{T97m9}
\end{table}

\begin{table}
	\centering
	\includegraphics[scale=0.6]{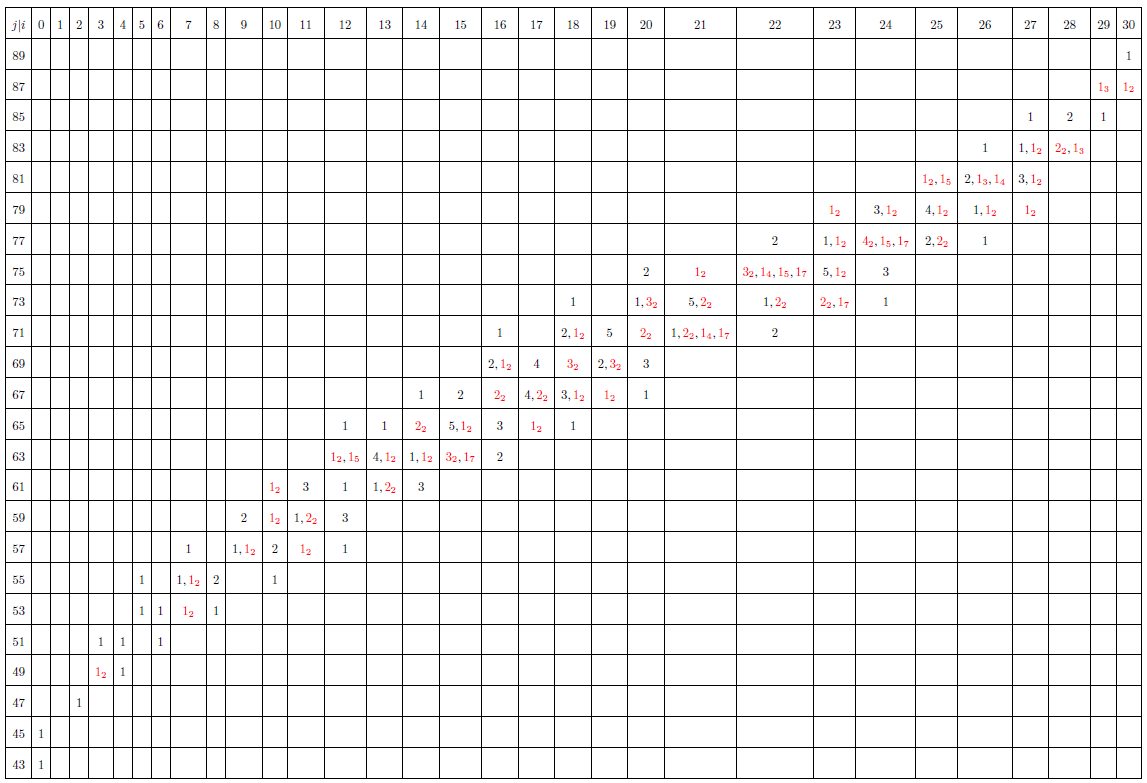}
	\caption{\small{Khovanov homology of the knot $T^{(-4)}(7,9)$.}}
	\label{T97m4}
\end{table}

\pagebreak

\begin{table}
	\centering
	\includegraphics[scale=0.97]{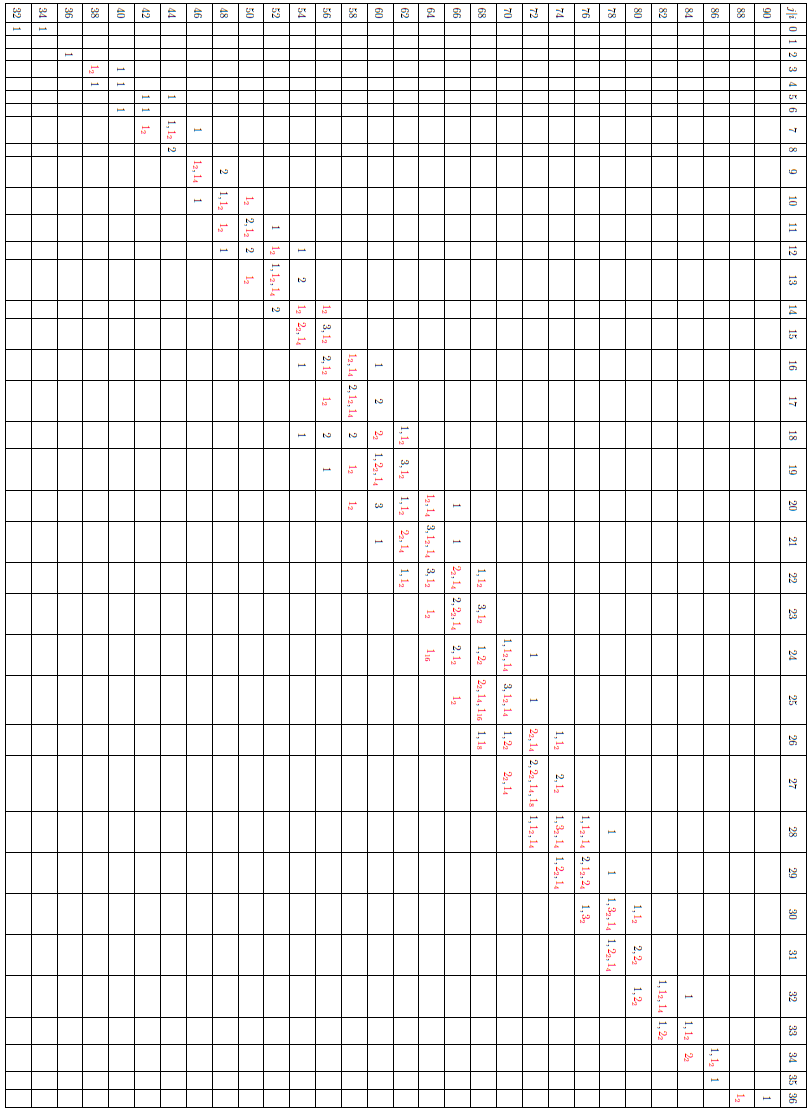}
	\caption{\small{Khovanov homology of the flat $2$-cabling of the knot $9_1$.}}
	\label{2-cabling_of_9_1_with_16_torsion}
\end{table}

\end{document}